\newcommand{\bproof}{\begin{proof}}
\newcommand{\eproof}{\end{proof}}
\begin{document}
\bibliographystyle{plain}
\makeatletter

\renewcommand{\theequation}{\thesection.\arabic{equation}}
\numberwithin{equation}{section}
\renewcommand{\bar}{\overline}
\newtheorem{theorem}{Theorem}[section]
\newtheorem{question}{Question}[section]
\newtheorem{proposition}{Proposition}[section]
\newtheorem{lemma}{Lemma}[section]
\newtheorem{corollary}{Corollary}[section]
\newtheorem{definition}{Definition}[section]
\newtheorem{problem}{\em Problem}[section]
\newtheorem{remark}{Remark}[section]
\newtheorem{example}{Example}[section]
\newtheorem{case}{Case}[section]
\newtheorem{assumption}{Assumption}[section]

\renewcommand\proofname{\bf Proof} 
\def\eop{$\rule{1.3ex}{1.3ex}$}
\renewcommand\qedsymbol\eop  
\numberwithin{equation}{section}
\makeatletter

\newcommand{\lom}{(\log M)^{3/2+\delta}}

\newcommand{\XXX}{{\bf XXX~}}
\newcommand{\beq}{\begin{equation}} \newcommand{\eeq}{\end{equation}}
\newcommand{\eigv}{{v}}
\newcommand{\bz}{{\bf z}}
\newcommand{\bx}{{\bf x}}
\newcommand{\bt}{{\bf t}}
\newcommand{\tn}{{|\hspace{-.03truecm} | \hspace{-.03truecm}|}}
\newcommand{\bi}{\begin{itemize}}
\newcommand{\be}{\begin{enumerate}}
\newcommand{\ei}{\end{itemize}}
\newcommand{\ee}{\end{enumerate}}
\newcommand{\calH}{{\cal H}}
\newcommand{\E}{{\mathbb E}}
\newcommand{\Prob}{{\mathbb P}}
\newcommand{\Om}{{\Theta}}
\newcommand{\om}{{\theta}}
\newcommand{\gam}{{\gamma}}
\newcommand{\Gam}{{\Gamma}}
\newcommand{\homega}{{\hat \om}}
\newcommand{\bS}{{\mathbb S}}
\newcommand{\R}{{\mathbb R}}
\newcommand{\N}{{\mathbb N}}
\newcommand{\GD}{{\mathcal N}}
\newcommand{\calE}{{\cal E}}
\newcommand{\calG}{{\cal G}}
\newcommand{\calV}{{\cal V}}
\newcommand{\calK}{{\cal K}}
\newcommand{\calN}{{\cal N}}
\newcommand{\calU}{{\cal U}}
\newcommand{\calT}{{\cal T}}
\newcommand{\calY}{{\cal Y}}
\newcommand{\calO}{{\cal O}}
\newcommand{\calX}{{\cal X}}
\newcommand{\calW}{{\cal W}}
\newcommand{\W}{{\cal W}}
\newcommand{\G}{{\cal G}}
\newcommand{\K}{{\cal K}}
\newcommand{\OO}{{\bf O}}
\newcommand{\hK}{{\hat K}}
\newcommand{\X}{{\cal X}}
\newcommand{\M}{{\cal M}}
\newcommand{\KG}{{\cal K(\calG)}}
\newcommand{\lam}{{\lambda}}
\newcommand{\calM}{{\cal M}}
\newcommand{\calA}{{\cal A}}
\newcommand{\calB}{{\cal B}}
\newcommand{\calL}{{\cal L}}
\newcommand{\calD}{{\cal D}}
\newcommand{\calR}{{\cal R}}
\newcommand{\pp}{{\cal P}}
\newcommand{\hc}{{\hat c}}
\newcommand{\ck}{{c_K}}
\newcommand{\hL}{{\hat L}}
\newcommand{\tL}{{\bar L}}
\newcommand{\sK}{{\SSS K}}
\newcommand{\hg}{{g_K}}
\newcommand{\tf}{{f_K}}
\newcommand{\hy}{{y_K}}
\newcommand{\haty}{{\hat y}}
\newcommand{\hG}{{\hat \Gam}}
\newcommand{\vt}{{\vec t}}
\newcommand{\vv}{{\vec v}}
\newcommand{\lb}{{\langle}}
\newcommand{\rb}{{\rangle}}
\newcommand{\by}{{\bf y}}
\newcommand{\btau}{{\bf \tau}}
\newcommand{\bu}{{\bf u}}
\newcommand{\bv}{{\bf v}}
\newcommand{\tby}{\tilde{{\bf y}}}
\newcommand{\Sb}{{\bf S}}
\newcommand{\Mb}{{\bf M}}
\newcommand{\Ob}{{\bf O}}
\newcommand{\SSS}{\scriptscriptstyle}
\def\boldf#1{\hbox{\rlap{$#1$}\kern.4pt{$#1$}}}
\newcommand{\balpha}{{\boldf \alpha}}
\newcommand{\wh}{\hat w}
\newcommand{\Wh}{\hat W}
\newcommand{\wb}{\bar w}
\newcommand{\Wb}{\bar W}
\newcommand{\xb}{\bar x}
\newcommand{\cb}{\bar c}
\newcommand{\trans}{^{\scriptscriptstyle \top}}
\newcommand{\tW}{\tilde{W}}
\newcommand{\tw}{\tilde{w}}
\newcommand{\hbeta}{{\hat \beta}}
\newcommand{\De}{{\Delta}}

\newcommand{\figsheight}{4.0cm}
\renewcommand\baselinestretch{1}


\begin{titlepage}
\advance\topmargin by 0.5in
\begin{center}

{\Large Oracle Inequalities and Optimal Inference under Group
Sparsity}
\vspace{.8truecm}

\end{center}

\begin{center}

{\bf Karim Lounici}$^{(1,2)}$, {\bf  Massimiliano Pontil}$^{(3)}$, {\bf Alexandre B. Tsybakov}$^{(2)}$,
{\bf Sara van de Geer}$^{(4)}$
\vspace{.80truecm}

\noindent (1) Statistical Laboratory\\
University of Cambridge, Cambridge, UK\\
\vspace{.35truecm}

\noindent (2) CREST \\
3, Av. Pierre Larousse, 92240 Malakoff, France \\
{\em \{karim.lounici,alexandre.tsybakov\}@ensae.fr} \\

\vspace{.35truecm}

\noindent (3) Department of Computer Science \\
University College London \\
Gower Street, London WC1E, England, UK \\
E-mail: {\em m.pontil@cs.ucl.ac.uk}

\vspace{.35truecm}

\noindent (4) Seminar f\"ur Statistik, ETH Z\"urich \\
R\"amistrasse 101, 8092 Z\"urich, Switzerland \\
{\em geer@stat.math.ethz.ch}

\vspace{.65truecm}


\end{center}

\begin{abstract}
\noindent We consider the problem of estimating a sparse linear
regression vector $\beta^*$ under a gaussian noise model, for the
purpose of both prediction and model selection. We assume that prior
knowledge is available on the sparsity pattern, namely the set of
variables is partitioned into prescribed groups, only few of which
are relevant in the estimation process. This group sparsity
assumption suggests us to consider the Group Lasso method as a means
to estimate $\beta^*$. We establish oracle inequalities for the
prediction and $\ell_2$ estimation errors of this estimator. These
bounds hold under a restricted eigenvalue condition on the design
matrix. Under a stronger coherence condition, we derive bounds for
the estimation error for mixed $(2,p)$-norms with $1\le p\leq
\infty$. When $p=\infty$, this result implies that a threshold
version of the Group Lasso estimator selects the sparsity pattern of
$\beta^*$ with high probability. Next, we prove that the rate of
convergence of our upper bounds is optimal in a minimax sense, up to
a logarithmic factor, for all estimators over a class of group
sparse vectors. Furthermore, we establish lower bounds for the
prediction and $\ell_2$ estimation errors of the usual Lasso
estimator. Using this result, we demonstrate that the Group Lasso
can achieve an improvement in the prediction and estimation
properties as compared to the Lasso.

An important application of our results is provided by the problem
of estimating multiple regression equation simultaneously or
multi-task learning. In this case, our results lead to refinements of
the results in \cite{colt2009} and allow one to establish the
quantitative advantage of the Group Lasso over the usual Lasso in
the multi-task setting. Finally, within the same setting, we show
how our results can be extended to more general noise distributions,
of which we only require the fourth moment to be finite. To obtain
this extension, we establish a new maximal moment inequality, which
may be of independent interest.
\end{abstract}
\end{titlepage}

\section{Introduction}
%
Over the past few years there has been a great deal of attention on
the problem of estimating a {\em sparse}\footnote{The phrase
``$\beta^*$ is sparse'' means that most of the components of this
vector are equal to zero.} regression vector $\beta^*$ from a set of
linear measurements \beq y = X \beta^* + W. \label{eq:sr} \eeq Here
$X$ is a given $N \times K$ design matrix and $W$ is a zero mean
random variable modeling the presence of noise.

A main motivation behind sparse estimation comes from the
observation that in several practical applications the number of
variables $K$ is much larger than the number $N$ of observations,
but the underlying model is known to be sparse, see
\cite{candes2007dss,donoho2006srs} and references therein. In this
situation, the ordinary least squares estimator is not well-defined.
A more appropriate estimation method is the $\ell_1$-norm penalized
least squares method, which is commonly referred to as the Lasso.
The statistical properties of this estimator are now well
understood, see, e.g.,
\cite{BRT,BTWAnnals07,bunea2007soi,KoltStFlour,lounici2008snc,vandegeer2008hdg}
and references therein. In particular, it is possible to obtain
oracle inequalities on the estimation and prediction errors, which
are meaningful even in the regime $K \gg N$.

%
In this paper, we study the above estimation problem under
additional structural conditions on the sparsity pattern of the
regression vector $\beta^*$. Specifically, we assume that the set of
variables can be partitioned into
a number of groups, only few of which are relevant in the estimation
process. In other words, not only we require that many components of
the vector $\beta^*$ are zero, but also that many of a priori known
subsets of components are all equal to zero. This structured
sparsity assumption suggests us to consider the Group Lasso method
\cite{YuanLin} as a mean to estimate $\beta^*$ (see equation
\eqref{eq:opt} below). It is based on regularization with a mixed
$(2,1)$-norm, namely the sum, over the set of groups, of the square
norm of the regression coefficients restricted to each of the
groups. This estimator has received significant recent attention,
see
\cite{bach07,ChesHeb07,Horowitz08,zhang,koltch_y08,MeierGeerBuhlm06,MGB08,NarRin08,Obozinski2008usr,SPAM}
and references therein. Our principal goal is to clarify the
advantage of this more stringent group sparsity assumption in the
estimation process over the usual sparsity assumption. For this
purpose, we shall address the issues of bounding the prediction
error, the estimation error as well as estimating the sparsity
pattern. The main difference from most of the previous work is that
we obtain not only the upper bounds but also the corresponding lower
bounds and thus establish optimal rates of estimation and prediction
under group sparsity.

%
A main motivation for us to consider the group sparsity
assumption is the practically important problem of simultaneous
estimation the coefficient of multiple regression equations
\begin{equation}
\begin{array}{lcl}
y_1 & = & X_1 \beta^*_1 + W_1 \\
y_2 & = & X_2 \beta^*_2 + W_2 \\
~ & \vdots & ~\\
y_T & = & X_T \beta^*_T + W_T.
\end{array}
\label{eq:s1-i}
\end{equation}
Here $X_1,\dots,X_T$ are prescribed $n \times M$ design matrices,
$\beta_1^*,\dots,\beta^*_T \in \R^M$ are the unknown regression
vectors which we wish to estimate, $y_1\dots,y_T$ are
$n$-dimensional vectors of observations and $W_1,\dots,W_T$ are {\em
i.i.d.} zero mean random noise vectors. Examples in which this
estimation problem is relevant range from multi-task learning
\cite{AEP,maurer,Obozinski2008usr} and conjoint analysis
\cite{EPT,lenk} to longitudinal data analysis \cite{diggle2002ald}
as well as the analysis of panel data
\cite{hsiao2003apd,wooldridge2002eac}, among others. We briefly
review these different settings in the course of the paper. In
particular, multi-task learning provides a main motivation for our
study. In that setting each regression equation corresponds to a
different learning task; in addition to the requirement that $M \gg
n$, we also allow for the number of tasks $T$ to be much larger than
$n$. Following \cite{AEP} we assume that there are only few common
important variables which are shared by the tasks. That is, we
assume that the vectors $\beta_1^*,\dots,\beta^*_T$ are not only
sparse but also have their sparsity patterns included in the same
set of small cardinality. This group sparsity assumption
induces a relationship between the responses and, as we shall see,
can be used to improve estimation.

The model \eqref{eq:s1-i} can be reformulated as a single regression
problem of the form \eqref{eq:sr} by setting $K=MT$, $N=nT$,
identifying the vector $\beta$ by the concatenation of the vectors
$\beta_1,\ldots,\beta_T$ and choosing $X$ to be a block diagonal
matrix, whose blocks are formed by the matrices $X_1,\ldots,X_T$, in
order. In this way the above sparsity assumption on the vectors
$\beta_t$ translate in a group sparsity assumption on the vector
$\beta^*$, where each group is associated with one of the variables.
That is, each group contains the same regression component across
the different equations \eqref{eq:s1-i}. Hence the results developed
in this paper for the Group Lasso apply to the multi-task learning
problem as a special case.


\subsection{Outline of the main results}

We are now ready to summarize the main contributions of this paper.
\begin{itemize}

\item
We first establish bounds for the prediction and $\ell_2$ estimation
errors for the general Group Lasso setting, see Theorem \ref{th1}. In
particular, we include a ``slow rate" bound, which holds under no
assumption on the design matrix $X$. We then apply the theorem to
the specific multi-task setting, leading to some refinements of the
results in \cite{colt2009}. Specifically, we demonstrate that as the
number of tasks $T$ increases the dependence of the bound on the
number of variables $M$ disappears, provided that $M$ grows at the
rate slower than $\exp(T)$.

\item We extend previous results on the selection of the sparsity pattern
for the usual Lasso to the Group Lasso case, see Theorem
\ref{est-supnorm}. This analysis also allows us to establish the rates
of convergence of the estimators for mixed $(2,p)$-norms with $1\le
p\le \infty$ (cf. Corollary\ref{p-norm}).

\item We show that the rates of convergence in the above upper
bounds for the prediction and $(2,p)$-norm estimation errors are
optimal in a minimax sense (up to a logarithmic factor) for all
estimators over a class of group sparse vectors $\beta^*$, see
Theorem \ref{th_minimax_lb}.

\item We prove that the Group Lasso can achieve an
improvement in the prediction and estimation properties as compared
to the usual Lasso. For this purpose, we establish lower bounds for
the prediction and $\ell_2$ estimation errors of the Lasso estimator
(cf. Theorem \ref{thm:lb}) and show that, in some important cases,
they are greater than the corresponding upper bounds for the Group
Lasso, under the same model assumptions. In particular, we clarify
the advantage of the Group Lasso over the Lasso in the multi-task
learning setting.

\item Finally, we present an extension of the multi-task learning analysis to more
general noise distributions having only bounded fourth moment, see
Theorems \ref{thm:6.1} and \ref{thm:6.2}; this extension is not
straightforward and needs a new tool, the maximal moment inequality of
Lemma~\ref{nem-sara}, which may be of independent interest.

\end{itemize}

\subsection{Previous work}

Our results build upon recently developed ideas in the area of
compressed sensing and sparse estimation, see, e.g.,
\cite{BRT,candes2007dss,donoho2006srs,KoltStFlour} and references
therein. In particular, it has been shown by different authors,
under different conditions on the design matrix, that the Lasso
satisfies sparsity oracle inequalities, see
\cite{BRT,BTWAnnals07,bunea2007soi,lounici2008snc,KoltStFlour,
vandegeer2008hdg,ZH08} and references therein. Closest to our study
is the paper \cite{BRT}, which relies upon a Restricted Eigenvalue
(RE) assumption as well as \cite{lounici2008snc}, which considered
the problem of selection of sparsity pattern. Our techniques of
proofs build upon and extend those in these papers.

Several papers analyzing statistical properties of the Group Lasso
estimator appeared quite recently
\cite{bach07,ChesHeb07,Horowitz08,koltch_y08,MeierGeerBuhlm06,MGB08,NarRin08,SPAM}.
Most of them are focused on the Group Lasso for additive models
\cite{Horowitz08,koltch_y08,MGB08,SPAM} or generalized linear models
\cite{MeierGeerBuhlm06}. Special choice of groups is studied in
\cite{ChesHeb07}. Discussion of the Group Lasso in a relatively
general setting is given by Bach \cite{bach07} and Nardi and Rinaldo
\cite{NarRin08}. Bach \cite{bach07} assumes that the predictors
(rows of matrix $X$) are random with a positive definite covariance
matrix and proves results on consistent selection of sparsity
pattern $J(\beta^*)$ when the dimension of the model ($K$ in our
case) is fixed and $N\to\infty$. Nardi and Rinaldo \cite{NarRin08}
address the issue of sparsity oracle inequalities in the spirit of
\cite{BRT} under the simplifying assumption that all the Gram
matrices $\Psi_j$ (see the definition below) are proportional to the
identity matrix. However, the rates in their bounds are not precise
enough (see comments in \cite{colt2009}) and they do not demonstrate
advantages of the Group Lasso as compared to the usual Lasso.
Obozinski et al. \cite{Obozinski2008usr} consider the model
(\ref{eq:s1-i}) where all the matrices $X_t$ are the same and all
their rows are independent Gaussian random vectors with the same
covariance matrix. They show that the resulting estimator achieves
consistent selection of the sparsity pattern and that there may be
some improvement with respect to the usual Lasso. Note that the
Gaussian $X_t$ is a rather particular example, and Obozinski et al.
\cite{Obozinski2008usr} focused on the consistent selection, rather
than exploring whether there is some improvement in the prediction
and estimation properties as compared to the usual Lasso. The latter
issue has been addressed in our work \cite{colt2009} and in the
parallel work of Huang and Zhang \cite{zhang}. These papers
considered only heuristic comparisons of the two estimators, i.e.,
those based on the upper bounds. Also the settings treated there did
not cover the problem in whole generality. Huang and Zhang
\cite{zhang} considered the general Group Lasso setting but obtained
only bounds for prediction and $\ell_2$ estimation errors, while
\cite{colt2009} focused only on the multi-task setting, though
additionally with bounds for more general mixed $(2,p)$-norm
estimation errors and consistent pattern selection properties.

\subsection{Plan of the paper}
This paper is organized as follows. In Section \ref{sec:2} we define
the Group Lasso estimator and describe its application to the
multi-task learning problem.  In Sections \ref{sec:3} and
\ref{Section2.1} we study the oracle properties of this estimator in
the case of Gaussian noise, presenting upper bounds on the
prediction and estimation errors. In Section
\ref{sec:model-selection}, under a stronger condition on the design
matrices, we describe a simple modification of our method and show
that it selects the correct sparsity pattern with an overwhelming
probability. Next, in Section \ref{sec:minimax_lb} we show that the
rates of convergence in our upper bounds on prediction and
$(2,p)$-norm estimation errors with $1\le p\le \infty$ are optimal
in a minimax sense, up to a logarithmic factor. In Section
\ref{sec:lb} we provide a lower bound for the Lasso estimator, which
allows us to quantify the advantage of the Group Lasso over the
Lasso under the group sparsity assumption. In Section \ref{sec:5} we
discuss an extension of our results for multi-task learning to more
general noise distributions. Finally, Section \ref{sec:nem} presents
a new maximal moment inequality (an extension of Nemirovski's
inequality from the second to arbitrary moments), which is needed in
the proofs of Section \ref{sec:5}.

\section{Method}
\label{sec:2}


In this section, we introduce the notation and describe the
estimation method, which we analyze in the paper. We consider the
linear regression model \beq y = X \beta^{*} + W, \label{eq:model}
\eeq where $\beta^* \in \R^K$ is the vector of regression
coefficients, $X$ is an $N \times K$ design matrix, $y\in \mathbb
R^N$ is the response vector and $W \in \mathbb R^N$ is a random
noise vector which will be specified later. We also denote by
$x_{1}\trans,\dots,x_{N}\trans$ the rows of matrix $X$. Unless
otherwise specified, all vectors are meant to be column vectors.
Hereafter, for every positive integer $\ell$, we let $\N_\ell$ be
the set of integers from $1$ and up to $\ell$. Throughout the paper
we assume that $X$ is a deterministic matrix. However, it should be
noted that our results extend in a standard way (as discussed, e.g.,
in \cite{BRT}, \cite{candes2007dss}) to random $X$ satisfying the
assumptions stated below with high probability.

We choose $M \leq K$ and let the set ${G}_1,\dots,{G}_M$ form a
prescribed partition of the index set $\N_K$ in $M$ sets. That is,
$\N_K = \cup_{j=1}^M G_j$ and, for every $j \neq j'$, $G_j \cap G_{j'} = \emptyset$.
For every $j \in \N_M$, we let $K_j = |G_j|$ be the cardinality of
$G_j$ and denote by ${\bf X}_{G_j}$ the $N \times K_j$ sub-matrix of
$X$ formed by the columns indexed by $G_j$. We also use the notation
$\Psi = X\trans X/N$ and $\Psi_j = {\bf X}_{G_j}\trans {\bf
X}_{G_j}/N$ for the normalized Gram matrices of $X$ and ${\bf
X}_{G_j}$, respectively.

For every $\beta \in \R^{K}$ we introduce the notation $\beta^j =
(\beta_{k}: k \in {G}_j)$ and, for every $1\le p <\infty$, we define
the mixed $(2,p)$-norm of $\beta$ as $$\|\beta\|_{2,p} =
\left(\sum_{j=1}^M \left( \sum_{k \in G_j}
\beta_k^2\right)^\frac{p}{2} \right)^\frac{1}{p} =
\left(\sum_{j=1}^M
 \|\beta^j\|^p\right)^\frac{1}{p}
$$
and the $(2,\infty)$-norm of $\beta$ as
$$
\|\beta\|_{2,\infty} = \max_{1 \leq j \leq M}  \|\beta^j\|,
$$
where $\|\cdot\|$ is the standard Euclidean norm.

If $J \subseteq \N_M$ we let $\beta_{J}$ be the vector $(\beta^j
I\{j \in J\} : j \in \N_M)$, where $I\{\cdot\}$ denotes the
indicator function. Finally we set $J(\beta) = \{j:\beta^j \neq 0,~j
\in \N_M\}$ and $M(\beta) = |J(\beta)|$ where $|J|$ denotes the
cardinality of set $J\subset \{1,\dots,M\}$. The set $J(\beta)$
contains the indices of the relevant groups and the number $M(\beta)$
the number of such groups. Note that when $M=K$ we have $G_j=\{j\}$,
$j \in \N_K$ and $\|\beta\|_{2,p} = \|\beta\|_p$, where $\|\beta\|_p$
is the $\ell_p$ norm of $\beta$.

The main assumption we make on $\beta^*$ is that it is {\em
group sparse}, which means that $M(\beta^*)$ is much smaller
than $M$.

Our main goal is to estimate the vector $\beta^*$ as well as its
sparsity pattern $J(\beta^*)$ from $y$. To this end, we consider the
Group Lasso estimator. It is defined to be a solution $\hbeta$ of
the optimization problem \beq \min \left\{ \frac{1}{N} \|X \beta -
y\|^2 + 2 \sum_{j=1}^M \lam_j \|\beta^j\|: \beta \in \R^K \right\},
\label{eq:opt} \eeq where $\lam_1,\dots,\lam_M$ are positive
parameters, which we shall specify later.

In order to study the statistical properties of this estimator, it
is useful to present the optimality conditions for a solution of the
problem \eqref{eq:opt}. Since the objective function in
\eqref{eq:opt} is convex, $\hbeta$ is a solution of \eqref{eq:opt}
if and only if $0$ (the $K$-dimensional zero vector) belongs to the
subdifferential of the objective function. In turn, this condition
is equivalent to the requirement that $$ -\nabla \left(\frac{1}{N}
\|X \beta - y\|^2 \right) \in 2 \partial \left(\sum_{j=1}^M \lam_j
\|\hbeta^{j}\| \right),  $$ where $\partial$ denotes the
subdifferential (see, for example, \cite{BorLew} for more
information on convex analysis). Note that
\begin{align*}
\partial \left(\sum_{j=1}^M \lam_j \|\beta^{j}\| \right) =
\bigg\{ \theta \in \R^{K}: \theta^j = \lam_j
\frac{\beta^j}{\|\beta^j\|}~{\rm if}~ \beta^j \neq 0,~{\rm and}~
\|\theta^j\| \leq \lam_j~{\rm if}~ \beta^j = 0,~j \in \N_M
\bigg\}.
\end{align*}
Thus, $\hbeta$ is a solution of \eqref{eq:opt} if and only if
\begin{eqnarray}
\label{eq:sol-n0} \frac{1}{N} (X\trans (y - X \hbeta))^j & = &
\lam_j \frac{\hbeta^j}{\|\hbeta^j\|},~~~~\mbox{if}~\hbeta^j \neq 0 \\
\label{eq:sol-n2} \frac{1}{N} \|(X\trans (y - X \hbeta))^j\| & \leq
& \lam_j,~~~~~~~~~~~~~\mbox{if}~ \hbeta^j = 0.
\end{eqnarray}


\subsection{Simultaneous estimation of multiple regression equations and multi-task learning}
\label{sec:MT-model} As an application of the above ideas we
consider the problem of estimating multiple linear regression
equations simultaneously. More precisely, we consider multiple
Gaussian regression models,
\begin{equation}
\begin{array}{lcl}
y_1 & = & X_1 \beta^*_1 + W_1 \\
y_2 & = & X_2 \beta^*_2 + W_2 \\
~ & \vdots & ~\\
y_T & = & X_T \beta^*_T + W_T,
\end{array}
\label{eq:s1}
\end{equation}
where, for each $t \in \N_T$, we let $X_t$ be a prescribed $n \times
M$ design matrix, $\beta_t^*\in \R^M$ the unknown vector of
regression coefficients and $y_t$ an $n$-dimensional vector of
observations. We assume that $W_1,\dots,W_T$ are {\em i.i.d.} zero
mean random vectors.

We study this problem under the assumption that the sparsity
patterns of vectors $\beta_t^*$ are for any $t$ contained in the
same set of small cardinality $s$. In other words, the response
variable associated with each equation in \eqref{eq:s1} depends only
on some members of a small subset of the corresponding predictor
variables, which is preserved across the different equations. We
consider as our estimator a solution of the optimization problem
\beq
\min \left\{ \frac{1}{T} \sum_{t=1}^T \frac{1}{n} \|X_t \beta_t
- y_t\|^2 + 2 \lambda \sum_{j=1}^M \left(\sum_{t=1}^T
\beta_{tj}^2\right)^\frac{1}{2}: \beta_1,\dots,\beta_T \in \R^M
\right\} \label{eq:opt-MT} \eeq
with some tuning parameter
$\lambda>0$. As we have already mentioned in the introduction, this
estimator is an instance of the Group Lasso estimator described
above. Indeed, set $K = MT$, $N=nT$, let $\beta \in \R^K$ be the
vector obtained by stacking the vectors $\beta_1,\dots,\beta_T$ and
let $y$ and $W$ be the random vectors formed by stacking the vectors
$y_1,\dots,y_T$ and the vectors $W_1,\dots,W_T$, respectively.
We identify each row index of $X$ with a double index $(t,i) \in
\N_T \times \N_n$ and each column index with $(t,j) \in \N_T \times
\N_M$. In this special case the matrix $X$ is block diagonal and its
$t$-th block is formed by the $n \times M$ matrix $X_t$
corresponding to ``task $t$".
Moreover, the groups are defined as
$G_j = \{(t,j) : t \in \N_T\}$ and the parameters $\lam_j$ in
\eqref{eq:opt} are all set equal to a common value $\lam$. Within
this setting, we see that \eqref{eq:opt-MT} is a special case of
\eqref{eq:opt}.

Finally, note that the vectors $\beta^j = (\beta_{tj}: t \in
\N_T)^\top$ are formed by the coefficients corresponding to the
$j$-th variable ``across the tasks''. The set $J(\beta) =
\{j:\beta^j \neq 0,~j \in \N_M\}$ contains the indices of the
relevant variables present in at least one of the vectors
$\beta_1,\dots,\beta_T$ and the number $M(\beta)=|J(\beta)|$
quantifies the level of group sparsity across the tasks. The
structured sparsity (or group sparsity) assumption has the form
$M(\beta^*)\le s$ where $s$ is some integer much smaller than $M$.

\vspace{.3truecm} Our interest in this model with group sparsity is
mainly motivated by multi-task learning. Let us briefly discuss the
multi-task setting as well as other applications, in which the
problem of estimating multiple regression equations arises.

\vspace{.3truecm} \noindent {\bf Multi-task learning.} In machine
learning, the problem of multi-task learning has received much
attention recently, see \cite{AEP} and references therein. Here each
regression equation corresponds to a different ``learning task''. In
this context the tasks often correspond to binary classification,
namely the response variables are binary. For instance, in image
detection each task $t$ is associated with a particular type of
visual object (e.g., face, car, chair, etc.), the rows $x_{ti}^\top$
of the design matrix $X_t$ represent an image and $y_{ti}$ is a
binary label, which, say, takes the value $1$ if the image depicts
the object associated with task $t$ and the value $-1$ otherwise. In
this setting the number of samples $n$ is typically much smaller
than the number of tasks $T$. A main goal of multi-task learning is
to exploit possible relationships across the tasks to aid the learning
process.

\vspace{.3truecm} \noindent {\bf Conjoint analysis.} In marketing
research, an important problem is the analysis of datasets
concerning the ratings of different products by different customers,
with the purpose of improving products, see, for example,
\cite{aaker1995mr,lenk,EPT} and references therein. Here the index
$t\in \N_T$ refers to the customers and the index $i \in \N_n$
refers to the different ratings provided by a customer. Products are
represented by (possibly many) categorical or continuous variables
(e.g., size, brand, color, price etc.). The observation $y_{ti}$ is
the rating of product $x_{ti}$ by the $t$-th customer. A main goal
of conjoint analysis is to find common factors which determine
people's preferences to products. In this context, the variable
selection method we analyze in this paper may be useful to
``visualize'' peoples perception of products \cite{aaker1995mr}.

\vspace{.3truecm} \noindent {\bf Seemingly unrelated regressions
(SUR).} In econometrics, the problem of estimating the regression
vectors $\beta^*_t$ in \eqref{eq:s1} is often referred to as {\em
seemingly unrelated regressions} (SUR) \cite{zellner1962eme} (see
also \cite{srivastava1987sur} and references therein). In this
context, the index $i \in \N_n$ often refers to time and the
equations \eqref{eq:s1} are equivalently represented as $n$ systems
of linear equations, indexed by time. The underlying assumption in
the SUR model is that the matrices $X_t$ are of rank $M$, which
necessarily requires that $n \geq M$. Here we do not make such an
assumption. We cover the case $n\ll M$ and show how, under a
sparsity assumption, we can reliably estimate the regression
vectors. The classical SUR model assumes that the noise variables
are zero mean correlated Gaussian, with ${\rm cov}(W_s,W_t) =
\sigma_{st} I_{n \times n},~s,t \in \N_T$. This induces a relation
between the responses that can be used to improve estimation. In our
model such a relation also exists but it is described in a different
way, for example, we can consider that the sparsity patterns of
vectors $\beta_1^*,\dots,\beta^*_T$ are the same.

\vspace{.3truecm} \noindent {\bf Longitudinal and panel data.}
Another related context is {\em longitudinal data analysis}
\cite{diggle2002ald} as well as the analysis of {\em panel data}
\cite{hsiao2003apd,wooldridge2002eac}. Panel data refers to a
dataset which contains observations of different phenomena observed
over multiple instances of time (for example, election studies,
political economy data, etc). The models used to analyze panel data
appear to be related to the SUR model described above, but there is
a large variety of model assumptions on the structure of the
regression coefficients, see, for example,
\cite{hsiao2003apd}. Up to our knowledge however, sparsity assumptions have not
been been put forward for analysis within this context.


\section{Sparsity oracle inequalities}
\label{sec:3}

Let $1\le s \le M$ be an integer that gives an upper bound on the
group sparsity $M(\beta^*)$ of the true regression vector
$\beta^*$. We make the following assumption.
\begin{assumption}
\label{RE}
There exists a positive number $\kappa=\kappa(s)$ such that
\begin{align*}
\min \bigg\{ \frac{\|X\Delta\|}{\sqrt{N}\|\Delta_{J}\|}~:~ |J| \leq
s, \Delta\in \R^{K}\setminus \{0\}, \,\sum_{j \in J^c} \lam_j
\|\Delta^j\| \leq 3 \sum_{j \in J} \lam_j \|\Delta^j\| \bigg\}
\geq \kappa, \end{align*}\label{ass} where $J^c$ denotes the
complement of the set of indices~$J$.
\end{assumption}

To emphasize the dependency of Assumption \ref{RE} on $s$, we will
sometimes refer to it as Assumption RE($s$). This is a natural
extension to our setting of the Restricted Eigenvalue assumption for
the usual Lasso and Dantzig selector from \cite{BRT}. The $\ell_1$
norms are now replaced by (weighted) mixed (2,1)-norms.

Several simple sufficient conditions for Assumption \ref{RE} in the
Lasso case, i.e., when all the groups $G_j$ have size 1, are given
in \cite{BRT}. Similar sufficient conditions can be stated in our
more general setting. For example, Assumption 3.1 is immediately
satisfied if ${X\trans}X/N$ has a positive minimal eigenvalue. More
interestingly, it is enough to suppose that the matrix $X\trans X/N$
satisfies a Restricted Isometry condition as in \cite{candes2007dss}
or the coherence condition (cf. Lemma \ref{lem:2} below).

To state our first result we need some more notation. For every
symmetric and positive semi-definite matrix $A$, we denote by ${\rm
tr}(A)$, $\|A\|_{\rm Fr}$ and $\tn A \tn$ the trace, Frobenius and
spectral norms of $A$, respectively. If $\rho_1,\dots,\rho_k$ are the
eigenvalues of $A$, we have that ${\rm tr}(A) = \sum_{i=1}^k \rho_i$,
$\|A\|_{\rm Fr} = \sqrt{\sum_{i=1}^k \rho_i^2}$ and $\tn A \tn =
\max_{i=1}^k \rho_i$.
\begin{lemma}\label{lem:1}
Consider the model \eqref{eq:model}, and let $M \geq 2$, $N \geq 1$.
Assume that $W\in\R^N$ is a random vector with i.i.d.
$\GD(0,\sigma^2)$ gaussian components, $\sigma^2>0$.
For every $j \in \N_M$, recall that $\Psi_j={\bf X}_{G_j}\trans {\bf
X}_{G_j} /N$ and choose \beq \label{eq:lam} \lam_j \geq
\frac{2\sigma}{\sqrt{N}} \sqrt{{\rm tr}(\Psi_j)  + 2\tn \Psi_j\tn(2
q\log M + \sqrt{K_j q \log M})}. \eeq

Then with probability at least $1 - 2 M^{1-q}$, for any solution
$\hbeta$ of problem \eqref{eq:opt} and all $\beta \in \R^{K}$ we
have that
\begin{align}
\nonumber ~ &\frac{1}{N} \|X (\hbeta - \beta^*)\|^2 + \sum_{j=1}^M
\lam_j \|\hbeta^j- \beta^j\| \leq \frac{1}{N} \|X (\beta -
\beta^*)\|^2 &~\\ \label{eq:1} ~& \quad \quad \quad
+ 4 \sum_{j\in J(\beta)}\lam_j \min\left(\|\beta^j\|, \|\hbeta^{j}-\beta^j\|\right), &~\\
\label{eq:2}
~&\frac{1}{N} \|(X\trans X (\hbeta-\beta^*))^j\| \leq \frac{3}{2} \lam_j, & ~\\
\label{eq:3} ~ & M(\hbeta) \leq \frac{4 \phi_{\rm
max}}{\lam_{\min}^2 N} \|X (\hbeta - \beta^*)\|^2, & ~
\end{align}
where $\lam_{\min}= \min_{j=1}^M \lam_j$ and $\phi_{\rm max}$ is the
maximum eigenvalue of the matrix $ X\trans X/N$.
\end{lemma}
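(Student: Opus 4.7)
The plan is to work on the event
\begin{equation*}
\calE = \left\{ \frac{1}{N}\|(X\trans W)^j\| \le \frac{\lam_j}{2}\ \mbox{for every } j \in \N_M \right\},
\end{equation*}
and to show separately that (i) $\Prob(\calE) \ge 1 - 2M^{1-q}$ via a Gaussian quadratic-form tail bound, and (ii) on $\calE$ the three conclusions \eqref{eq:1}--\eqref{eq:3} hold deterministically from the optimality conditions \eqref{eq:sol-n0}--\eqref{eq:sol-n2} together with the minimizing property of $\hbeta$.

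For (i), the key observation is that $(X\trans W)^j = {\bf X}_{G_j}\trans W$ is Gaussian with covariance $\sigma^2 N \Psi_j$, so $\|(X\trans W)^j\|^2/(\sigma^2 N)$ is a linear combination of independent $\chi^2_1$ variables with weights equal to the eigenvalues of $\Psi_j$. A Laurent--Massart type bound then yields, for each $t>0$,
\begin{equation*}
\Prob\!\left(\frac{\|(X\trans W)^j\|^2}{N}>\sigma^2\bigl(\trace(\Psi_j)+2\|\Psi_j\|_{\rm Fr}\sqrt{t}+2\tn\Psi_j\tn t\bigr)\right)\le e^{-t}.
\end{equation*}
Taking $t=q\log M$ and using $\|\Psi_j\|_{\rm Fr}\le\tn\Psi_j\tn\sqrt{K_j}$, the prescribed $\lam_j$ in \eqref{eq:lam} is precisely chosen so that $N\lam_j^2/4$ dominates the right-hand bracket, giving $\Prob(\|(X\trans W)^j\|/N>\lam_j/2)\le M^{-q}$; a union bound over $j\in\N_M$ then yields the claimed probability (with the factor $2$ providing slack for the exact constant).

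The bound \eqref{eq:2} follows immediately: \eqref{eq:sol-n0}--\eqref{eq:sol-n2} give $\|(X\trans(y-X\hbeta))^j\|/N\le\lam_j$ for every $j$, and since $y-X\hbeta=W-X(\hbeta-\beta^*)$, the triangle inequality combined with $\calE$ produces $\|(X\trans X(\hbeta-\beta^*))^j\|/N\le\lam_j/2+\lam_j=3\lam_j/2$. For \eqref{eq:1}, I would compare the objective in \eqref{eq:opt} at $\hbeta$ and at a generic $\beta$, expand both squared residuals around $X\beta^*$, and bound the resulting cross term group-wise via Cauchy--Schwarz and $\calE$ by $\frac{2}{N}W\trans X(\hbeta-\beta)\le \sum_j\lam_j\|(\hbeta-\beta)^j\|$. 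Splitting every sum over $j$ into $J(\beta)$ and its complement (on which $\beta^j=0$ collapses $\|\hbeta^j\|$ into $\|(\hbeta-\beta)^j\|$) and applying the triangle inequalities $\|\beta^j\|-\|\hbeta^j\|\le\|(\hbeta-\beta)^j\|$ and $\|(\hbeta-\beta)^j\|\le\|\hbeta^j\|+\|\beta^j\|$, yields two parallel bounds on the $J(\beta)$-sum, one proportional to $\|(\hbeta-\beta)^j\|$ and one to $\|\beta^j\|$; taking the minimum group by group gives \eqref{eq:1}.

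For \eqref{eq:3}, I would exploit the fact that whenever $\hbeta^j\neq 0$, the equality \eqref{eq:sol-n0} forces $\|(X\trans(y-X\hbeta))^j\|/N=\lam_j$, so on $\calE$ we have $\|(X\trans X(\hbeta-\beta^*))^j\|/N\ge\lam_j/2\ge\lam_{\min}/2$ for every active group. Squaring and summing over the $M(\hbeta)$ active indices produces
\begin{equation*}
\frac{M(\hbeta)\,\lam_{\min}^2}{4}\le \frac{\|X\trans X(\hbeta-\beta^*)\|^2}{N^2}\le\frac{\phi_{\max}}{N}\|X(\hbeta-\beta^*)\|^2,
\end{equation*}
where the last step uses $v\trans(X\trans X)^2 v\le\phi_{\max}N\cdot v\trans X\trans X v$, and rearranging yields \eqref{eq:3}. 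The most delicate step is (i): one must match the constants of the Laurent--Massart inequality exactly against those appearing in \eqref{eq:lam}; the remainder is standard Lasso-type convex analysis adapted to the mixed $(2,1)$ penalty.
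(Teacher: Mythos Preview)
Your proposal is correct and follows essentially the same route as the paper's proof: the paper works on the identical event $\calA=\calE$, obtains \eqref{eq:1} from the minimizing property plus group-wise Cauchy--Schwarz and the two triangle-inequality bounds you describe, derives \eqref{eq:2} from the KKT conditions plus the triangle inequality, and gets \eqref{eq:3} from the lower bound $\|(X\trans X(\hbeta-\beta^*))^j\|/N\ge\lam_j/2$ on active groups followed by exactly your square-and-sum argument. The only cosmetic difference is in step~(i): the paper does not cite Laurent--Massart directly but instead uses the equivalent $\chi^2$ tail bound of Lemma~\ref{chi2} (from Cavalier et al.), which is where the factor $2$ in $2M^{1-q}$ actually originates rather than being mere slack; your Laurent--Massart route in fact gives the slightly stronger $M^{1-q}$, so the stated $\lam_j$ in \eqref{eq:lam} is comfortably sufficient either way.
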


\begin{proof}
For all $\beta \in \R^{K}$, we have
$$
\frac{1}{N} \|X \hbeta - y\|^2 +  2 \sum_{j=1}^M \lam_j \|\hbeta^j\|
\leq \frac{1}{N} \|X \beta - y\|^2 +  2 \sum_{j=1}^M \lam_j
\|\beta^j\|,
$$
which, using $y=X\beta^*+W$, is equivalent to \beq \frac{1}{N} \|X
(\hbeta - \beta^*)\|^2 \leq \frac{1}{N} \|X(\beta - \beta^*)\|^2  +
\frac{2}{N} W\trans X (\hbeta-\beta)+ 2 \sum_{j=1}^M  \lam_j \big(
\|\beta^j\| - \|\hbeta^j\|\big). \label{eq:aa1} \eeq By the
Cauchy-Schwarz inequality, we have that
$$
W\trans X (\hbeta-\beta) \leq \sum_{j=1}^M \|(X\trans W)^j\|
\|\hbeta^j - \beta^j\|.
$$
For every $j \in \N_M$, consider the random event \beq \label{evtA}
\calA = \bigcap_{j=1}^M \calA_j,\eeq where \beq \label{evtAj} {\cal
A}_j = \left\{\frac1{N} \|(X\trans W)^j\| \leq \frac{\lam_j}{2}
\right\}. \eeq We note that
$$
\Prob\left({\cal A}_j\right) = \Prob\left(\left\{\frac{1}{N^2}
W\trans {\bf X}_{G_j} {\bf X}_{G_j} \trans W \leq \frac{\lam_j^2}{4}
\right\}\right) =
\Prob\left(\left\{\frac{\sum_{i=1}^N \eigv_{j,i} (\xi_i^2 -
1)}{\sqrt{2} \|\eigv_j\|} \leq x_j \right\}\right),
$$ where
$\xi_1,\dots,\xi_N$ are i.i.d. standard Gaussian,
$\eigv_{j,1},\dots,\eigv_{j,N}$ denote the eigenvalues of the matrix
${\bf X}_{G_j} {\bf X}_{G_j}\trans/N$, among which the positive ones
are the same as those of $\Psi_j$, and the quantity $x_j$ is defined
as
$$
x_j = \frac{\lam_j^2 N/(4\sigma^2)- {\rm tr}(\Psi_j)}{\sqrt{2}
\|\Psi_j\|_{\rm Fr}}.
$$
We apply Lemma \ref{chi2} to upper bound the probability of the
complement of the event $A_j$. Specifically, we choose $v =
(v_{j,1},\dots,v_{j,N})$, $x = x_j$ and $m(v) =
\tn \Psi_j \tn/\|\Psi_j\|_{\rm Fr}$ and conclude from Lemma \ref{chi2}
that
\begin{eqnarray*}
\Prob\big({\cal A}_j^c\big)
\leq 2 \exp\left( -\frac{x_j^2} {2(1 + \sqrt{2} x_j
\tn \Psi_j\tn/\|\Psi_j\|_{\rm Fr})}\right).
\end{eqnarray*}
We now choose $x_j$ so that the right hand side of the above
inequality is smaller than $2 M^{-q}$. A direct computation yields
that $$ x_j
\geq \sqrt{2} \tn \Psi_j\tn/\|\Psi\|_{\rm Fr} q \log M + \sqrt{2
(\tn \Psi_j\tn q \log M)^2 + 2 q \log M},
$$
which, using the subadditivity property of the square root and the
inequality $\|\Psi_j\|_{\rm Fr} \leq \sqrt{K_j} \tn\Psi_j\tn$ gives
inequality \eqref{eq:lam}.
We conclude, by a union bound, under the above condition on the
parameters $\lam_j$, that $\Prob ({\cal A}^c) \leq 2 M^{1-q}$. Then,
it follows from inequality \eqref{eq:aa1}, with probability at least
$1 - 2 M^{1-q}$, that
\begin{eqnarray}
\nonumber \frac{1}{N} \|X (\hbeta - \beta^*)\|^2 + \sum_{j=1}^M
\lam_j\|\hbeta^j-\beta^j\| & \leq & \frac{1}{N} \|X (\beta -
\beta^*)\|^2 + 2 \sum_{j=1}^M\lam_j\big(
\|\hbeta^j-\beta^j\| + \|\beta^j\| - \|\hbeta^j\|\big) \\
\nonumber ~ & \leq & \frac{1}{N} \|X (\beta - \beta^*)\|^2 + 4
\sum_{j \in J(\beta)} \lam_j \min\left(\|\beta^j\|,
\|\hbeta^{j}-\beta^j\|\right),
\end{eqnarray}
which coincides with inequality \eqref{eq:1}.

To prove \eqref{eq:2}, we use the
inequality \beq \frac{1}{N} \|(X\trans(y-X\hbeta))^j\| \leq \lam_j,
\label{eq:nec} \eeq which follows from the optimality conditions
(\ref{eq:sol-n0}) and (\ref{eq:sol-n2}). Moreover, using
equation \eqref{eq:model} and the triangle inequality, we obtain that
\begin{align}
\nonumber &\frac{1}{N} \|(X\trans X(\hbeta - \beta^*))^j\| \leq
\frac{1}{N} \|(X\trans(X\hbeta - y))^j\| + \frac{1}{N} \|(X\trans
W)^j\|. \nonumber \end{align} The result then follows by combining
the last inequality with inequality \eqref{eq:nec} and using the
definition of the event ${\cal A}$.

Finally, we prove \eqref{eq:3}. First, observe that, on the event
${\cal A}$, it holds, uniformly over $j \in \N_M$, that \beq
\nonumber \frac{1}{N} \|(X\trans X(\hbeta - \beta^*))^j\| \geq
\frac{\lam_j}{2},~~~{\rm if}~\hbeta^j \neq 0. \eeq This fact follows
from \eqref{eq:sol-n0}, \eqref{eq:model} and the definition of the
event ${\cal A}$. The following chain yields the result:
\begin{eqnarray}
\nonumber M(\hbeta) & \leq & \frac{4}{N^2} \sum_{j\in J(\hbeta)}
\frac{1}{\lam_j^2} \|(X\trans X (\hbeta - \beta^*))^j\|^2 \\
\nonumber ~ & \leq &\frac{4}{\lam_{\min}^2 N^2}
\sum_{j \in J(\hbeta)} \|(X\trans X (\hbeta - \beta^*))^j\|^2 \\
\nonumber ~ & \leq & \frac{4}{\lam_{\min}^2 N^2}
\|X\trans X (\hbeta - \beta^*)\|^2 \nonumber \\
& \leq & \frac{4\phi_{\rm max}} {\lam_{\min}^2 N} \|X(\hbeta-
\beta^*)\|^2, \nonumber
\end{eqnarray}
where, in the last line we have used the fact that the eigenvalues
of $X\trans X/N$ are bounded from above by $\phi_{\max}$.
\end{proof}
We are now ready to state the main result of this section.

\begin{theorem}\label{th1}
Consider the model \eqref{eq:model} and let $M \geq 2$, $N \geq 1$.
Assume that $W\in\R^N$ is a random vector with i.i.d.
$\GD(0,\sigma^2)$ gaussian components, $\sigma^2>0$. For every $j
\in \N_M$, define the matrix $\Psi_j={\bf X}_{G_j}\trans {\bf
X}_{G_j} /N$ and choose
$$
\lam_j \geq \frac{2\sigma}{\sqrt{N}} \sqrt{{\rm tr}(\Psi_j)  +
2\tn \Psi_j \tn(2 q\log M + \sqrt{K_j q \log M})}.
$$
Then with probability at least $1 - 2 M^{1-q}$, for any solution
$\hbeta$ of problem \eqref{eq:opt} we have that
\begin{eqnarray}
\frac{1}{N} \|X(\hbeta - \beta^*)\|^2 & \leq & 4\|\beta^*\|_{2,1}\,
\max_{j=1}^M \lam_j.  \label{eq:t0-GL}
\end{eqnarray}
If, in addition, $M(\beta^{*})\leq s$ and Assumption \ref{ass} holds
with $\kappa=\kappa(s)$, then with probability at least $1 - 2
M^{1-q}$, for any solution $\hbeta$ of problem \eqref{eq:opt} we
have that
\begin{eqnarray}
\frac{1}{N} \|X(\hbeta - \beta^*)\|^2 & \leq & \frac{16}{\kappa^2}
\sum_{j \in J(\beta^*)} \lam_j^2,
\label{eq:t1-GL}
\\
\label{eq:t2-GL} \|\hbeta - \beta^*\|_{2,1} & \leq &
\frac{16}{\kappa^2} \sum_{j \in J(\beta^*)}
\frac{\lam_j^2}{\lam_{\min}},
~~ \\
\label{eq:t3-GL} M(\hbeta) & \leq & \frac{64 \phi_{\rm
max}}{\kappa^2} \sum_{j \in J(\beta^*)}
\frac{\lam^2_j}{\lam^2_{\min}},~~
\end{eqnarray}
where $\lam_{\min} = \min_{j=1}^M \lam_j$ and $\phi_{\rm max}$ is
the maximum eigenvalue of the matrix $ X\trans X/N$. If, in
addition, Assumption RE(2$s$) holds, then with the same probability
for any solution $\hbeta$ of problem \eqref{eq:opt} we have that
\begin{eqnarray}
\label{eq:t4-GL} \|\hbeta - \beta^*\| & \leq & \frac{4
\sqrt{10}}{\kappa^2(2s)} \frac{\sum_{j \in J(\beta^*)}
\lam_j^2}{\lam_{\min}\sqrt{s}}.
\end{eqnarray}
\end{theorem}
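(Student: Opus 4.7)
The plan is to adapt the block-decomposition argument of Bickel, Ritov and Tsybakov to the weighted mixed $(2,1)$-norm setting. Write $\Delta = \hbeta - \beta^*$ and $J = J(\beta^*)$, so $|J| \leq s$. Specializing \eqref{eq:1} to $\beta = \beta^*$ and dropping the non-negative prediction term on the left immediately gives the cone condition
\begin{equation*}
\sum_{j \in J^c} \lam_j \|\Delta^j\| \leq 3 \sum_{j \in J} \lam_j \|\Delta^j\|,
\end{equation*}
which is exactly the constraint appearing in Assumption \ref{RE}, so $\Delta$ may be fed into the restricted eigenvalue bound.

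Next, I reorder the groups of $J^c$ by non-increasing $\|\Delta^j\|$ and partition $J^c$ into successive blocks $J_1, J_2, \dots$ of cardinality $s$ (the last block possibly smaller). Set $J' = J \cup J_1$; by construction $|J'| \leq 2s$. The standard min-less-than-average trick gives, for every $k \geq 2$ and every $j \in J_k$, $\|\Delta^j\| \leq s^{-1}\sum_{i \in J_{k-1}}\|\Delta^i\|$, hence $\|\Delta_{J_k}\| \leq s^{-1/2}\sum_{i \in J_{k-1}}\|\Delta^i\|$, and summing over $k \geq 2$ yields the tail estimate $\|\Delta_{(J')^c}\| \leq s^{-1/2}\sum_{j \in J^c}\|\Delta^j\|$. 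Chaining this with the cone condition via $\lam_{\min}\sum_{j\in J^c}\|\Delta^j\| \leq \sum_{j\in J^c}\lam_j\|\Delta^j\|$, and applying Cauchy--Schwarz as $\sum_{j\in J}\lam_j\|\Delta^j\| \leq (\sum_{j\in J}\lam_j^2)^{1/2}\|\Delta_J\|$, produces
\begin{equation*}
\|\Delta_{(J')^c}\|^2 \leq \frac{9\sum_{j\in J}\lam_j^2}{s\,\lam_{\min}^2}\,\|\Delta_{J'}\|^2,
\end{equation*}
where I used $\|\Delta_J\| \leq \|\Delta_{J'}\|$ since $J \subset J'$.

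To close the argument, orthogonality of $\Delta_{J'}$ and $\Delta_{(J')^c}$ together with the inequality $\sum_{j\in J}\lam_j^2 \geq s\,\lam_{\min}^2$ (valid in the critical regime $|J|=s$; otherwise one pads $J$ with auxiliary indices, which preserves the cone condition) lets me absorb the $1$ into $10$ and obtain $\|\Delta\|^2 \leq 10\,(s\lam_{\min}^2)^{-1}\sum_{j\in J}\lam_j^2 \cdot \|\Delta_{J'}\|^2$. Now Assumption RE$(2s)$, applicable since $|J'| \leq 2s$, gives $\|\Delta_{J'}\|^2 \leq \|X\Delta\|^2/(N\kappa^2(2s))$, and \eqref{eq:t1-GL}, used with the weaker constant $\kappa(2s) \leq \kappa(s)$, bounds $\|X\Delta\|^2/N \leq 16\sum_{j\in J}\lam_j^2/\kappa^2(2s)$. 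Chaining these inequalities leads to $\|\Delta\|^2 \leq 160(\sum_{j\in J}\lam_j^2)^2/(\kappa^4(2s)\,s\,\lam_{\min}^2)$, and taking the square root yields \eqref{eq:t4-GL}. The main delicate point is the interplay between the heterogeneous weights $\lam_j$ in the cone and the single constant $\lam_{\min}$ in the target bound: the choice $|J_1| = s$ in the block decomposition is precisely what produces the factor $\sqrt{s}$ in the denominator of \eqref{eq:t4-GL}, and Cauchy--Schwarz must be applied so as to convert the weighted sum $\sum_{j\in J}\lam_j\|\Delta^j\|$ into $(\sum_{j\in J}\lam_j^2)^{1/2}\|\Delta_J\|$ rather than into the looser $\lam_{\max}\sum_{j\in J}\|\Delta^j\|$.
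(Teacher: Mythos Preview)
Your argument for \eqref{eq:t4-GL} follows essentially the same route as the paper: derive the cone condition from \eqref{eq:1} with $\beta=\beta^*$, peel off the $s$ ``largest'' groups in $J^c$, control the tail by a BRT-type block inequality, and close with RE$(2s)$. The differences are cosmetic: you order the groups of $J^c$ by $\|\Delta^j\|$ whereas the paper orders by the weighted quantities $\lam_j\|\Delta^j\|$ and uses the order-statistic bound $\lam_{j(k)}\|\Delta^{j(k)}\|\le k^{-1}\sum_{\ell\in J^c}\lam_\ell\|\Delta^\ell\|$ in place of your successive blocks; and you bound $\|\Delta_{J'}\|$ by composing RE$(2s)$ with the already-proved \eqref{eq:t1-GL}, whereas the paper feeds the basic inequality $\frac{1}{N}\|X\Delta\|^2 \le 4(\sum_{j\in J}\lam_j^2)^{1/2}\|\Delta_{J'}\|$ directly into RE$(2s)$. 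Both routes give the same constant $4\sqrt{10}$. Note that you address only \eqref{eq:t4-GL} and invoke \eqref{eq:t1-GL} as given; the remaining bounds \eqref{eq:t0-GL}--\eqref{eq:t3-GL} are indeed routine consequences of Lemma~\ref{lem:1} and Assumption~\ref{ass}, but you should say so explicitly.

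One minor gap: your padding fix for $|J|<s$ does not deliver \eqref{eq:t4-GL} as stated. Enlarging $J$ to $\tilde J$ with $|\tilde J|=s$ does preserve the cone condition, but the entire chain then runs with $\tilde J$ in place of $J$, so the final bound involves $\sum_{j\in\tilde J}\lam_j^2$, which can strictly exceed $\sum_{j\in J(\beta^*)}\lam_j^2$. The paper's proof has the same tacit assumption $\sum_{j\in J}\lam_j^2\ge s\lam_{\min}^2$ when passing to \eqref{ona}, so this is a shared issue; it is vacuous when all $\lam_j$ are equal (in particular in the multi-task setting) and disappears in general if one simply applies the theorem with $s=M(\beta^*)$.
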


\begin{proof} Inequality (\ref{eq:t0-GL}) follows immediately from
(\ref{eq:1}) with $\beta=\beta^*$. We now prove the remaining
assertions. Let $J = J(\beta^*) = \{j: (\beta^*)^j \neq 0\}$ and let
$\De=\hbeta-\beta^*$. By inequality \eqref{eq:1} with $\beta =
\beta^*$ we have, on the event~${\cal A}$, that \beq \frac{1}{N} \|X
\De\|^2 \leq 4 \sum_{j \in J} \lam_j \|\De^j\| \leq 4 \sqrt{\sum_{j
\in J} \lam_j^2} \,\| \De_{J}\|. \label{eq:gino} \eeq Moreover by
the same inequality, on the event ${\cal A}$, we have that $ \sum_{j=1}^M
\lam_j \|\De^j\| \leq 4 \sum_{j \in J} \lam_j \|\De^j\|$, which
implies that $\sum_{j \in J^c} \lam_j \|\De^j\| \leq 3  \sum_{j \in
J} \lam_j \|\De^j\|$. Thus, by Assumption \ref{ass} \beq \|\De_{J}\|
\leq \frac{\| X \De\|}{\kappa \sqrt{N}}. \label{eq:bb} \eeq Now,
\eqref{eq:t1-GL} follows from \eqref{eq:gino} and \eqref{eq:bb}.

Inequality \eqref{eq:t2-GL} follows by noting that, by \eqref{eq:1},
$$
\sum_{j=1}^M \lam_j \|\De^j\| \leq 4 \sum_{j \in J} \lam_j \|\De^j\|
\leq 4 \sqrt{\sum_{j \in J} \lam_j^2} \|\De_{J}\| \leq 4
\sqrt{\sum_{j \in J} \lam_j^2} \frac{\|X \De \|}{\sqrt{N}\kappa}
$$
and then using \eqref{eq:t1-GL} and $\sum_{j=1}^M \|\De^j\| \leq
\sum_{j=1}^M  \|\De^j\|\lam_j/\lam_{\min}$.

Inequality
\eqref{eq:t3-GL} follows from \eqref{eq:3} and \eqref{eq:t1-GL}.

Finally, we prove \eqref{eq:t4-GL}. Let $J'$ be the set of indices
in $J^c$ corresponding to $s$ largest values of $\lam_j
\|\Delta^{j}\|$. Consider the set $J_{2s}=J\cup J'$. Note that
$|J_{2s}|\leq 2s$. Let $j(k)$ be the index of the $k-$th largest
element of the set $\{\lam_j \|\Delta^{j}\|: \,j\in J^c\}$. Then,
$$
\lam_{j(k)} \|\Delta^{j(k)}\| \le \sum_{j\in J^c}\lam_j
\|\Delta^{j}\|/k.
$$
This and the fact that $\sum_{j\in J^c} \lam_j \|\Delta^j\|\le 3
\sum_{j \in J} \lam_j \|\Delta^{j}\|$ on the event ${\cal A}$
implies
\begin{eqnarray*}
\sum_{j\in J_{2s}^c} \lam_j^2 \|\Delta^j\|^2 &\le&
\sum_{k=s+1}^\infty \frac{\left(\sum_{\ell \in J^c}
\lam_{\ell}\|\Delta^{\ell}\|\right)^2
}{k^2} \\
&\le& \frac{ \left(\sum_{\ell \in J^c}
\lam_{\ell}\|\Delta^{\ell}\|\right)^2 }{s}\le \frac{9
\left(\sum_{\ell \in J} \lam_{\ell}\|\Delta^{\ell}\|\right)^2
}{s}\\
&\le& \frac{9(\sum_{j\in J}\lam_j^2) \|\Delta_{J}\|^2}{s} \le
\frac{9(\sum_{j\in J}\lam_j^2) \|\Delta_{J_{2s}}\|^2}{s}.
\end{eqnarray*}
Therefore, it follows that
$$
\lam_{\min}^2 \|\Delta_{J_{2s}^c}\|^2
\leq \frac{9}{s} \sum_{j \in J} \lam_j^2 \|\Delta_{J_{2s}}\|^2
$$
and, in turn, that
\begin{eqnarray}\label{ona}
\|\Delta\|^2 \le \frac{10}{s} \sum_{j \in J}
\frac{{\lam_j}^2}{\lam_{\min}^2} \|\Delta_{J_{2s}}\|^2.
\end{eqnarray}
Next note from \eqref{eq:gino} that
\begin{eqnarray}
\hspace{-.2truecm}\frac{1}{N} \|X \Delta\|^2
 \leq 4 \sqrt{\sum_{j \in J} \lam_j^2}
 \|\Delta_{J_{2s}}\|.\label{eq:gino1}
\end{eqnarray}
In addition, $ \sum_{j \in J^c} \lam_j \|\Delta^j\|\le 3 \sum_{j \in
J} \lam_j \|\Delta^j\| $ easily implies that
$$
\sum_{j \in J_{2s}^c} \lam_j \|\Delta^j\|\le 3 \sum_{j \in J_{2s}}
\lam_j \|\Delta^j\|.
$$
Combining Assumption RE(2$s$) with \eqref{eq:gino1} we have, on the
event ${\cal A}$, that
$$
\|\Delta_{J_{2s}}\| \le \frac{4 \sqrt{\sum_{j \in J} \lam_j^2}}
{\kappa^2(2s)}.
$$
This inequality and \eqref{ona} yield \eqref{eq:t4-GL}. \end{proof}

The oracle inequality (\ref{eq:t1-GL}) of Theorem \ref{th1} can be
generalized to include the bias term as follows.

\begin{theorem}\label{th-bias}
Let the assumptions of Lemma~\ref{lem:1} be satisfied and let
Assumption \ref{ass} holds with
$\kappa=\kappa(s)$ and with factor 3 replaced by 7. Then with
probability at least $1 - 2 M^{1-q}$, for any solution $\hbeta$ of
problem \eqref{eq:opt} we have
$$
\frac{1}{N}\|X(\hat{\beta}- \beta^*)\|^2 \leq \min\left\{
\frac{96}{\kappa^2}\sum_{j\in J(\beta)}\lambda_j^2 +
\frac{2}{N}\|X(\beta - \beta^*)\|^2: \beta \in \R^K, M(\beta)\leq s
 \right\}.
$$
\end{theorem}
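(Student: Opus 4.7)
The plan is to bootstrap the basic inequality \eqref{eq:1} from Lemma \ref{lem:1} (with $\beta$ no longer forced to equal $\beta^*$), combined with a dichotomy that isolates when the bias dominates and when the restricted eigenvalue condition kicks in. Fix a competitor $\beta\in\R^K$ with $M(\beta)\le s$, set $J=J(\beta)$ (so $|J|\le s$) and $\Delta=\hbeta-\beta$. Working on the event $\calA$ of Lemma~\ref{lem:1}, inequality \eqref{eq:1} specializes to
\begin{equation*}
\frac{1}{N}\|X(\hbeta-\beta^*)\|^2+\sum_{j=1}^{M}\lam_j\|\Delta^j\|
\;\le\;\frac{1}{N}\|X(\beta-\beta^*)\|^2+4\sum_{j\in J}\lam_j\|\Delta^j\|.
\end{equation*}

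Next I would do a case split on the size of the bias. In Case A, where $\frac{1}{N}\|X(\beta-\beta^*)\|^2\ge 4\sum_{j\in J}\lam_j\|\Delta^j\|$, the inequality above immediately gives $\frac{1}{N}\|X(\hbeta-\beta^*)\|^2\le \frac{2}{N}\|X(\beta-\beta^*)\|^2$, which already matches the desired bound (with oracle term absorbed). In Case B, where $\frac{1}{N}\|X(\beta-\beta^*)\|^2<4\sum_{j\in J}\lam_j\|\Delta^j\|$, I drop the quadratic term on the left and rearrange to obtain
\begin{equation*}
\sum_{j\in J^c}\lam_j\|\Delta^j\|\;\le\;\frac{1}{N}\|X(\beta-\beta^*)\|^2+3\sum_{j\in J}\lam_j\|\Delta^j\|\;\le\;7\sum_{j\in J}\lam_j\|\Delta^j\|,
\end{equation*}
which is exactly the cone condition under which the strengthened Assumption~RE($s$) (with factor $3$ replaced by $7$) gives $\|\Delta_J\|\le \|X\Delta\|/(\kappa\sqrt{N})$.

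Plugging this restricted eigenvalue bound back into the basic inequality via Cauchy--Schwarz, $4\sum_{j\in J}\lam_j\|\Delta^j\|\le 4\sqrt{\sum_{j\in J}\lam_j^2}\,\|\Delta_J\|\le \frac{4}{\kappa\sqrt{N}}\sqrt{\sum_{j\in J}\lam_j^2}\,\|X\Delta\|$, and using the triangle inequality $\|X\Delta\|\le \|X(\hbeta-\beta^*)\|+\|X(\beta-\beta^*)\|$, I obtain a quadratic inequality in $u:=\frac{1}{\sqrt{N}}\|X(\hbeta-\beta^*)\|$ of the shape $u^2\le v^2+c(u+v)$ with $v:=\frac{1}{\sqrt{N}}\|X(\beta-\beta^*)\|$ and $c:=\frac{4}{\kappa}\sqrt{\sum_{j\in J}\lam_j^2}$. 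A weighted AM--GM (absorbing $cu$ into $u^2$ with a small coefficient and $cv$ into $v^2$ in a way that recovers the prefactor $2$ in front of $v^2$) then yields $u^2\le 2v^2+\frac{96}{\kappa^2}\sum_{j\in J}\lam_j^2$.

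Combining Cases A and B and taking the minimum over $\beta\in\R^K$ with $M(\beta)\le s$ produces the claimed inequality. The main obstacle I foresee is not conceptual but bookkeeping: tracking the constants through the AM--GM so as to land exactly on the prefactor $2$ on the bias term (which is what forces the factor $7$ in the RE condition, via the inequality $4+3=7$ appearing above) while keeping the oracle constant explicit; once the Case A/Case B dichotomy is set up, everything else is routine.
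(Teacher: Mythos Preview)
Your proposal is correct and follows essentially the same route as the paper: the same case split on whether $\frac{1}{N}\|X(\beta-\beta^*)\|^2$ dominates $4\sum_{j\in J}\lambda_j\|\Delta^j\|$, the same derivation of the cone constraint with factor $7$ in the non-trivial case, and the same Cauchy--Schwarz/triangle/AM--GM cascade to close the quadratic inequality. The only cosmetic difference is that the paper first absorbs the bias term to write $u^2\le 8\sum_{j\in J}\lambda_j\|\Delta^j\|$ before applying RE (yielding $u^2\le 2c(u+v)$ with your $c$), whereas you keep the bias explicit and work with $u^2\le v^2+c(u+v)$; both routes land on the stated constants.
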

This result is of interest when $\beta^*$ is only assumed to
approximately sparse, that is when there exists a set of indices
$J_0$ with cardinality smaller than $s$ such that
$\|(\beta^*)_{J_0^c}\|^2$ is small.

\begin{proof}
  Let $\beta$ be arbitrary. Set $\Delta = \hat{\beta}-\beta$. By
  inequality (\ref{eq:1}), we have, on the event $\mathcal{A}$ that
  $$
\frac{1}{N}\|X(\hat{\beta}-\beta^*)\|^2 + \sum_{j=1}^M\lambda_j
\|\Delta^j\| \leq \frac{1}{N}\| X(\beta -\beta^*) \|^2 + 4\sum_{j\in
J(\beta)}\lambda_j \|\Delta^j\|.
  $$
Let $y>0$ be arbitrary. We consider two cases:\medskip\\
\medskip
case i) $4 \sum_{j\in J(\beta)}\lambda_j \|\Delta^j\| \geq
\frac{1}{N}\|X(\beta - \beta^*)\|^2$\\
case ii) $4 \sum_{j\in J(\beta)}\lambda_j \|\Delta^j\| <
\frac{1}{N}\|X(\beta - \beta^*)\|^2$\\

In case i), we have
$$
\frac{1}{N}\| X(\hat{\beta}-\beta^*) \|^2 + \sum_{j=1}^M \lambda_j
\|\Delta^j\| \leq  8 \sum_{j\in J(\beta)}\lambda_j \|\Delta^j\|.
$$
This implies
$$
\sum_{j\in J(\beta)^c}\lambda_j\|\Delta^j\| < 7 \sum_{j\in
J(\beta)}\lambda_j\|\Delta^j\|.
$$
Thus, by Assumption \ref{ass} (with factor 3 replaced by 7), we have
$$
\|\Delta_{J(\beta)}\| \leq \frac{\|X\Delta\|}{\kappa \sqrt{N}}.
$$
We obtain
\begin{eqnarray*}
  \frac{1}{N}\|X(\hat{\beta} - \beta^*)\|^2 + \sum_{j=1}^M \lambda_j
  \|\Delta^j\| &\leq& \frac{8}{\kappa}\sqrt{\sum_{j\in
  J(\beta)}\lambda_j^2}\frac{\|X\Delta\|}{\sqrt{N}}\\
  &\leq& \frac{8}{\kappa}\sqrt{\sum_{j\in  J(\beta)}\lambda_j^2}\left[ \frac{\|X(\hat{\beta} - \beta^*) \|}{\sqrt{N}} + \frac{\| X(\beta - \beta^*) \|}{\sqrt{N}}
  \right]\\
  &\leq& \frac{1}{2}\frac{\|X(\hat{\beta}-\beta^*)\|^2}{N} +
  \frac{32}{\kappa^2}\sum_{j\in J(\beta)}\lambda_j^2\\
  &~&\hspace{3cm} + \frac{\|X(\beta -
  \beta^*)\|^2}{N} + \frac{16}{\kappa^2}\sum_{j\in J(\beta)}\lambda_j^2.
\end{eqnarray*}
Hence
$$
\frac{1}{N}\|X(\hat{\beta} - \beta^*)\|^2 + 2\sum_{j=1}^M
\lambda_j\|\Delta^j\| \leq \frac{96}{\kappa^2}\sum_{j\in
J(\beta)}\lambda_j^2 + \frac{2}{N}\|X(\beta - \beta^*)\|^2.
$$
Case ii) gives
$$
\frac{1}{N}\|X(\hat{\beta} - \beta^*)\|^2 + \sum_{j=1}^M\lambda_j
\|\Delta^j\| < \frac{2}{N}\|X(\beta - \beta^*)\|^2.
$$
Hence
$$
\frac{1}{N}\|X(\hat{\beta}- \beta^*)\|^2 \leq
\min_\beta\left[\frac{96}{\kappa^2}\sum_{j\in J(\beta)}\lambda_j^2 +
\frac{2}{N}\|X(\beta - \beta^*)\|^2
  \right].
$$

\end{proof}


We end this section by a remark about the Group Lasso estimator with
overlapping groups, i.e., when $\N_K = \cup_{j=1}^M G_j$ but $G_j \cap G_{j'}\neq \emptyset$ for some
$j,j' \in \N_M$, $j \neq j'$. We refer
to \cite{binyu} for motivation and discussion featuring the
statistical relevance of group sparsity with overlapping
groups. Inspection of the proofs of Lemma \ref{lem:1} and Theorem
\ref{th1} immediately yields the following conclusion.
\begin{remark}
Inequalities \eqref{eq:1} and \eqref{eq:2} in Lemma \ref{lem:1} and
inequalities \eqref{eq:t1-GL}--\eqref{eq:t3-GL} in Theorem \ref{th1}
remain correct in the more general case of overlapping groups
$G_1,\dots, G_M$.
\end{remark}


\section{Sparsity oracle inequalities for
multi-task learning} \label{Section2.1}

We now apply the above results to the multi-task learning problem
described in Section \ref{sec:MT-model}. In this setting, $K=MT$ and
$N=nT$, where $T$ is the number of tasks, $n$ is the sample size for
each task and $M$ is the nominal dimension of unknown regression
parameters for each task. Also, for every $j \in \N_M$, $K_j = T$
and $\Psi_j= (1/T) I_{T \times T}$, where $I_{T \times T}$ is the $T
\times T$ identity matrix.
This fact is a consequence of the block diagonal structure of the design
matrix $X$ and the assumption that the variables are normalized to
one, namely all the diagonal elements of the matrix $(1/n) X\trans
X$ are equal to one. It follows that ${\rm tr}(\Psi_j) = 1$ and
$\tn \Psi_j\tn= 1/T$. The regularization parameters $\lam_j$ are all
equal to the same value $\lam$, cf. (\ref{eq:opt-MT}). Therefore,
\eqref{eq:lam} takes the form
\begin{equation}\label{q1}
\lam \geq \frac{2\sigma}{\sqrt{nT}} \sqrt{1 + \frac{2}{T} \left(2q
\log M + \sqrt{Tq \log M}\right)}.
\end{equation}
In particular, Lemma \ref{lem:1} and Theorem \ref{th1} are valid for
$$
\lam \geq \frac{2\sqrt{2}\sigma}{\sqrt{nT}} \sqrt{1 + \frac{5q}{2}
\frac{\log M}{T}}
$$
since the right-hand side of this inequality is greater than that of
(\ref{q1}).

 For the convenience of the reader we state the
Restricted Eigenvalue assumption for the multi-task case
\cite{colt2009}.
\begin{assumption}
\label{RE-MT}
There exists a positive number $\kappa_{\rm MT}=\kappa_{\rm MT}(s)$
such that
\begin{align*}
\min \bigg\{ \frac{\|X\Delta\|}{\sqrt{n}\|\Delta_{J}\|}~:~ |J| \leq
s, \Delta\in \R^{MT}\setminus \{0\}, \|\Delta_{J^c}\|_{2,1} \leq 3
\|\Delta_{J}\|_{2,1} \bigg\} \geq \kappa_{\rm MT},
\end{align*}\label{ass_MT} where $J^c$ denotes the complement of the
set of indices~$J$.
\end{assumption}
We note that parameters $\kappa,\phi_{\max }$ defined in Section
\ref{sec:3} correspond to $\kappa_{\rm MT} /\sqrt{T}$ and $\phi_{\rm
MT}/T$ respectively, where $\phi_{\rm MT}$ is the largest eigenvalue
of the matrix $X\trans X/n$.

Using the above observations we obtain the following corollary of
Theorem \ref{th1}.

\begin{corollary}\label{co1}
Consider the multi-task model \eqref{eq:s1} for $M \geq 2$ and $T,n
\geq 1$. Assume that $W\in\R^N$ is a random vector with i.i.d.
$\GD(0,\sigma^2)$ gaussian components, $\sigma^2>0$, and all diagonal elements of the matrix $X\trans X/n$ are equal to $1$. Set
$$
\lambda = \frac{2\sqrt{2}\sigma}{\sqrt{nT}}\left(1 + \frac{A \log
M}{T}\right)^{1/2},
$$
where $A>5/2$. Then with probability at least $1 - 2M^{1-2A/5}$, for
any solution $\hbeta$ of problem \eqref{eq:opt-MT} we have that
\begin{eqnarray}
\label{eq:t0} \frac{1}{nT} \|X(\hbeta - \beta^*)\|^2
\hspace{-.2truecm} & \leq & \frac{8\sqrt{2} \sigma}{\sqrt{nT}}
\left(1 + \frac{A \log M}{T}\right)^{1/2}\|\beta^*\|_{2,1}\,.
\end{eqnarray}
Moreover, if in addition it holds that $M(\beta^*) \leq s$ and Assumption \ref{RE-MT}
holds with $\kappa_{\rm MT}=\kappa_{\rm MT}(s)$, then with
probability at least $1 - 2M^{1-2A/5}$, for any solution $\hbeta$ of
problem \eqref{eq:opt-MT} we have that
\begin{eqnarray}
\label{eq:t1} \hspace{-.6truecm} \frac{1}{nT} \|X(\hbeta -
\beta^*)\|^2 \hspace{-.2truecm} & \leq & \hspace{-.2truecm}
\frac{128 \sigma^2}{\kappa_{\rm MT}^2} \frac{s}{n}\left(1 + \frac{A
\log
M}{T}\right)~~\hspace{.3truecm} \\
\label{eq:t2} \frac1{\sqrt {T}}\|\hbeta - \beta^*\|_{2,1}
\hspace{-.2truecm} & \leq & \hspace{-.2truecm} \frac{32\sqrt{2}
\sigma}{\kappa_{\rm MT}^2} \frac{s}{\sqrt{n}}\left(1 + \frac{A \log
M}{T}\right)^{1/2}~~ \\
\label{eq:t3} \hspace{-.2truecm} M(\hbeta) \hspace{-.2truecm} & \leq
& \hspace{-.2truecm}  \frac{64 \phi_{\rm MT}}{\kappa_{\rm MT}^2}
s,~~
\end{eqnarray}
where $\phi_{\rm MT}$ is the largest eigenvalue of the matrix
$X\trans X/n$.

Finally, if in addition $\kappa_{\rm MT}(2s) > 0$, then with
the same probability for any solution $\hbeta$ of problem
\eqref{eq:opt-MT} we have that
\begin{eqnarray}
\label{eq:t4} \hspace{-.2truecm} \frac{1}{\sqrt{T}} \|\hbeta -
\beta^*\| \hspace{-.2truecm} & \leq & \hspace{-.2truecm} \frac{16
\sqrt{5} \sigma}{\kappa^2_{\rm MT}(2s)} \sqrt{\frac{s}{n}}\left(1 +
\frac{A \log M}{T}\right)^{1/2}. \hspace{.3truecm}
\end{eqnarray}
\end{corollary}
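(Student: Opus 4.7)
The plan is to obtain Corollary~\ref{co1} as a direct specialization of Theorem~\ref{th1} to the block-diagonal multi-task design. The identities $\mathrm{tr}(\Psi_j)=1$, $\tn\Psi_j\tn=1/T$, $K_j=T$, $N=nT$, $K=MT$ (already recorded at the start of this section), together with the correspondences $\kappa = \kappa_{\rm MT}/\sqrt{T}$ and $\phi_{\max} = \phi_{\rm MT}/T$ between the quantities in Section~\ref{sec:3} and those in Assumption~\ref{RE-MT}, reduce every conclusion to a mechanical substitution; no new probabilistic or analytic ingredient is required.

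The first step is to verify that the proposed $\lambda$ satisfies the hypothesis \eqref{eq:lam} of Theorem~\ref{th1} with $q = 2A/5$, so that the event of probability at least $1 - 2M^{1-q} = 1 - 2M^{1 - 2A/5}$ stated in the corollary is exactly the event $\mathcal{A}$ produced by Lemma~\ref{lem:1}. After the substitutions above, \eqref{eq:lam} reduces to inequality (\ref{q1}); squaring both sides and simplifying with $q = 2A/5$ further reduces to
\begin{equation*}
1 + \frac{2A}{5}\frac{\log M}{T} \;\geq\; 2\sqrt{\frac{2A}{5}\frac{\log M}{T}},
\end{equation*}
which is just the AM--GM inequality $1 + x \geq 2\sqrt{x}$ with $x = (2A/5)\log M/T$. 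This is the only nontrivial scalar estimate in the whole argument.

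The second step is to plug the identifications into \eqref{eq:t0-GL}--\eqref{eq:t4-GL}. For \eqref{eq:t0} apply \eqref{eq:t0-GL} with $\max_j \lam_j = \lam$ and $N = nT$. For \eqref{eq:t1}, \eqref{eq:t2} and \eqref{eq:t4} use that $\sum_{j \in J(\beta^*)} \lam_j^2 = s\lam^2$ and $\lam_{\min} = \lam$; the factor $1/\kappa^2 = T/\kappa_{\rm MT}^2$ then cancels either a $1/N = 1/(nT)$ on the prediction side, or a $1/\sqrt{T}$ coming from the per-task normalization of the $(2,1)$ and $\ell_2$ norms on the estimation side, leaving the rates $s/n$ and $\sqrt{s/n}$ up to the $\bigl(1 + A\log M/T\bigr)^{1/2}$ factor carried by $\lam$. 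Inequality \eqref{eq:t3} is most immediate: the product $\phi_{\max}/\kappa^2 = \phi_{\rm MT}/\kappa_{\rm MT}^2$ is scale-free in $T$, and $\sum_{j \in J(\beta^*)} \lam_j^2/\lam_{\min}^2 = s$. The only numerical curiosity is in \eqref{eq:t4}, where the prefactor $4\sqrt{10}$ from \eqref{eq:t4-GL} combines with the $2\sqrt{2}$ inside $\lam$ to give $4\sqrt{10}\cdot 2\sqrt{2} = 16\sqrt{5}$.

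The main obstacle, if one can call it that, is purely notational bookkeeping: Theorem~\ref{th1} is stated with the un-averaged $N$-denominator and an unweighted $\|\cdot\|_{2,1}$, whereas \eqref{eq:opt-MT} uses $1/(nT)$ averaging and a per-task Euclidean norm, so one must divide the raw output of \eqref{eq:t2-GL} and \eqref{eq:t4-GL} by $\sqrt{T}$ to obtain the forms \eqref{eq:t2} and \eqref{eq:t4}, and one must consistently convert $\kappa(s)$ to $\kappa_{\rm MT}(s)/\sqrt{T}$ (and likewise for $\kappa(2s)$). Once this accounting is in place, all five bounds drop out in a few lines.
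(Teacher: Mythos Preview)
Your proposal is correct and follows exactly the route the paper takes: specialize Theorem~\ref{th1} to the multi-task design via the identifications $N=nT$, $K_j=T$, $\Psi_j=(1/T)I_{T\times T}$, $\lambda_j\equiv\lambda$, $\kappa=\kappa_{\rm MT}/\sqrt{T}$, $\phi_{\max}=\phi_{\rm MT}/T$, and verify that the stated $\lambda$ (with $q=2A/5$) satisfies \eqref{q1}. Your explicit AM--GM step $1+x\ge 2\sqrt{x}$ with $x=(2A/5)\log M/T$ is precisely the scalar inequality the paper asserts without justification just before stating the corollary, and your arithmetic checks on the five substitutions (including $4\sqrt{10}\cdot 2\sqrt{2}=16\sqrt{5}$) are all correct.
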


Note that the values $T$ and $\sqrt{T}$ in the denominators of the
left-hand sides of inequalities (\ref{eq:t1}), (\ref{eq:t2}), and
(\ref{eq:t4}) appear quite naturally. For instance, the norm
$\|\hbeta - \beta^*\|_{2,1}$ in (\ref{eq:t2}) is a sum of $M$ terms
each of which is a Euclidean norm of a vector in $\R^T$, and thus it
is of the order $\sqrt{T}$ if all the components are equal.
Therefore, (\ref{eq:t2}) can be interpreted as a correctly
normalized ``error per coefficient" bound.

Corollary \ref{co1} is valid for any fixed $n,M,T$; the approach is
non-asymptotic. Some relations between these parameters are relevant
in the particular applications and various asymptotics can be
derived as special cases. For example, in multi-task learning it is
natural to assume that $T \geq n$, and the motivation for our
approach is the strongest if also $M\gg n$. The bounds of Corollary
\ref{co1} are meaningful if the sparsity index $s$ is small as
compared to the sample size $n$ and the logarithm of the dimension
$\log M$ is not too large as compared to ${T}$.

More interestingly, the dependency on the dimension $M$ in the
bounds is negligible if the number of tasks $T$ is larger than $\log
M$. In this regime, no relation between the sample size $n$ and the
dimension $M$ is required. This is quite in contrast to the standard
results on sparse recovery where the condition
\begin{eqnarray*} \log ({\rm
dimension})&\ll& {\rm sample \ size}
\end{eqnarray*}
 is considered as {\it sine qua
non} constraint. For example, Corollary \ref{co1} gives meaningful
bounds if $M=\exp({n^\gamma})$ for arbitrarily large $\gamma>0$,
provided that $T>n^{\gamma}$.

Finally, note that Corollary \ref{co1} is in the same spirit as a
result that we obtained in \cite{colt2009} but there are two
important differences. First, in \cite{colt2009} we considered
larger values of $\lambda$, namely with $\left(1 + \frac{A \log
M}{\sqrt{T}}\right)^{1/2}$ in place of $\left(1 + \frac{A \log
M}{T}\right)^{1/2}$, and we obtained a result with higher
probability. We switch here to the smaller $\lambda$ since it leads
to minimax rate optimality, cf. lower bounds below. The second
difference is that we include now the ``slow rate" result
(\ref{eq:t0}), which guarantees convergence of the prediction loss
{\it with no restriction on the matrix} $X\trans X$, provided that
the norm $(2,1)$-norm of $\beta^*$ is bounded. For example, if the
absolute values of all components of $\beta^*$ do not exceed some
constant $\beta_{\max}$, then $\|\beta^*\|_{2,1}\le \beta_{\max}
s\sqrt{T}$ and the bound (\ref{eq:t0}) is of the order
$\frac{s}{\sqrt{n}}\left(1 + \frac{A \log M}{T}\right)^{1/2}$.


\section{Coordinate-wise estimation and selection of sparsity pattern}
\label{sec:model-selection}

In this section we show how from any solution of (\ref{eq:opt}), we
can estimate the correct sparsity pattern $J(\beta^*)$ with
high probability. We also establish bounds for estimation of
$\beta^*$ in all $(2,p)$ norms with $1\le p\le \infty$ under a
stronger condition than Assumption \ref{RE}.

Recall that we use the notation $\Psi = \frac{1}{N}X\trans X$ for
the Gram matrix of the design. We introduce some additional notation
which will be used throughout this section. For any $j,j'$ in
$\mathbb N_{M}$ we define the matrix $\Psi[j,j'] = \frac{1}{N}{\bf
X}_{G_j}\trans {\bf X}_{G_{j'}}$ (note that $\Psi[j,j]= \Psi_j$ for
any $j$). We denote by $\Psi[j,j']_{t,t'}$, where $t \in \N_{K_j},t'
\in \N_{K_{j'}}$, the $(t,t')$-th element of matrix $\Psi[j,j']$.
For any $\Delta\in \mathbb R^K$ and $j\in \mathbb N_M$ we set
$\Delta^j = (\Delta_{t}: t \in \N_{K_j})$.

In this section, we assume that the following condition holds true.
\begin{assumption}\label{MC}
There exist some integer $s\ge 1$ and some constant $\alpha>0$ such
that:
\begin{enumerate}
\item For any $j\in \mathbb N_M$ and $t \in \N_{K_j}$ it holds that $(\Psi[j,j])_{t,t} = \phi$ and
$$
\max_{1\leq t,t'\leq K_j,t\neq t'}\left|
(\Psi[j,j])_{t,t'} \right|
\leq
\frac{\lambda_{\min}\phi}{14\alpha\lambda_{\max}s}\frac{1}{\sqrt{K_j
K_{j'}}}.
$$
\item For any $j\neq j' \in \mathbb N_M$ it holds that
$$
\max_{1\leq t \leq \min(K_j,K_{j'})}\left| (\Psi[j,j'])_{t,t}
\right| \leq \frac{\lambda_{\min}\phi}{14\alpha\lambda_{\max}s}
$$
and
$$
\max_{1\leq t \leq K_j, 1\leq t' \leq K_{j'},t\neq
t'}\left| (\Psi[j,j'])_{t,t'} \right| \leq
\frac{\lambda_{\min}\phi}{14\alpha\lambda_{\max}s}\frac{1}{\sqrt{K_j
K_{j'}}}.
$$
\end{enumerate}
\end{assumption}
This assumption is an extension to the general Group Lasso setting
of the coherence condition of \cite{colt2009} introduced in the
particular multi-task setting. Indeed, in the multi-task case
$K_j\equiv T$, $\lambda_{\min}=\lambda_{\max}$, and for any $j\in
\mathbb{N}_M$ the matrix ${\bf X}_{G_j}$ is block diagonal with the
$t$-th block of size $n\times 1$ formed by the $j$-th column of the
matrix $X_{t}$ (recall the notation in Section \ref{sec:MT-model})
and $\phi = 1/T$. It follows that $(\Psi[j,j'])_{t,t'} = 0$ for any
$j,j' \in \mathbb{N}_M$ and $t\neq t'\in \mathbb N_{T}$. Then
Assumption \ref{MC} reduces to the following: $\max_{1 \leq t \leq
T}|(\Psi[j,j'])_{t,t}|\leq \frac{1}{14\alpha sT}$ whenever $j\neq
j'$ and $(\Psi[j,j])_{t,t} = \frac{1}{T}$. Thus, we see that for the
multi-task model Assumption \ref{MC} takes the form of the usual
coherence assumption for each of the $T$ separate regression
problems. We also note that, the coherence assumption in
\cite{colt2009} was formulated with the numerical constant $7$
instead of $14$. The larger constant here is due to the fact that we
consider the general model with not necessarily block diagonal
design matrix, in contrast to the multi-task setting of
\cite{colt2009}.

Lemma \ref{lem:2}, which is presented in the appendix, establishes
that Assumption \ref{MC} implies Assumption \ref{RE}. Note also
that, by an argument as in \cite{lounici2008snc}, it is not hard to
show that under Assumption \ref{MC} any group $s$-sparse vector
$\beta^*$ satisfying (\ref{eq:model}) is unique.

Theorem \ref{th1} provides bounds for compound measures of risk,
that is, depending simultaneously on all the vectors $\beta^j$. An
important question is to evaluate the performance of estimators for
each of the components $\beta^j$ separately. The next theorem
provides a bound of this type and, as a consequence, a result on the
selection of sparsity pattern.

\begin{theorem}\label{est-supnorm}
Let the assumptions of Theorem \ref{th1} be satisfied and let
Assumption \ref{MC} hold with the same $s$. Set
\begin{equation}\label{eq:c}
c = \left(\frac{3}{2} + \frac{16}{7(\alpha-1)} \right).
\end{equation}
Then with probability at least $1 - 2M^{1-q}$, for any solution
$\hat{\beta}$ of problem (\ref{eq:opt}) we have that
\begin{equation}\label{CI}
\|\hat{\beta}-\beta^{*}\|_{2,\infty} \leq \frac{c}{\phi}
\lambda_{\max}.
\end{equation}
If, in addition,
\begin{equation}\label{lb}
\min_{j\in J(\beta^{*})} \|(\beta^{*})^j\| >
\frac{2c}{\phi}\lambda_{\max},
\end{equation}
then with the same probability for any solution $\hat{\beta}$ of
problem (\ref{eq:opt}) the set of indices
\begin{equation}\label{lb1}
\hat{J} = \left\{j: \|\hat{\beta}^j\| >
\frac{c}{\phi}\lambda_{\max}\right\}
\end{equation}
estimates correctly the sparsity pattern $J(\beta^{*})$, that is,
$$
\hat{J} = J(\beta^{*}).
$$
\end{theorem}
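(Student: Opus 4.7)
The plan is to control each group component $\|\hat\beta^j - (\beta^*)^j\|$ separately by exploiting the near-diagonal structure imposed by Assumption \ref{MC}, and then deduce pattern recovery via a simple thresholding argument. Set $\Delta = \hat\beta - \beta^*$. The starting point is inequality \eqref{eq:2} from Lemma \ref{lem:1}, which holds on the event $\mathcal{A}$ of probability at least $1-2M^{1-q}$ and reads
$$
\Big\|\tfrac{1}{N}\bigl(X\trans X\,\Delta\bigr)^j\Big\| \le \tfrac{3}{2}\lam_j, \qquad j \in \N_M.
$$
I would then decompose
$$
\bigl(\tfrac{1}{N}X\trans X\,\Delta\bigr)^j \;=\; \phi\,\Delta^j \;+\; \bigl(\Psi[j,j]-\phi I\bigr)\Delta^j \;+\; \sum_{j'\neq j}\Psi[j,j']\Delta^{j'},
$$
and use the reverse triangle inequality to turn \eqref{eq:2} into a lower bound on $\phi\|\Delta^j\|$ minus perturbation terms.

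The next step is to bound the two perturbation terms using Assumption \ref{MC}. Write $C = \lambda_{\min}\phi/(14\alpha\lambda_{\max}s)$. A direct Cauchy-Schwarz computation on the off-diagonal entries of $\Psi[j,j]$ yields $\|(\Psi[j,j]-\phi I)\Delta^j\| \le C\|\Delta^j\|$. For the cross terms, splitting the $(t,t)$ and $(t,t')$ entries of $\Psi[j,j']$ and applying Cauchy-Schwarz gives, after summing $t=1,\ldots,K_j$, the clean bound $\|\Psi[j,j']\Delta^{j'}\| \le 2C\|\Delta^{j'}\|$. Combining and using $\|\Delta^j\| \le \|\Delta\|_{2,1}$,
$$
C\|\Delta^j\| + 2C\!\!\sum_{j'\neq j}\|\Delta^{j'}\| \;\le\; 2C\|\Delta\|_{2,1},
$$
so that $\phi\|\Delta^j\| \le \tfrac{3}{2}\lam_{\max} + 2C\|\Delta\|_{2,1}$.

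To eliminate $\|\Delta\|_{2,1}$, I would invoke Lemma \ref{lem:2} (stated in the appendix), which guarantees that Assumption \ref{MC} implies Assumption \ref{RE} with a restricted eigenvalue constant satisfying $\kappa^2 \ge \phi(\alpha-1)/\alpha$. Plugging this and \eqref{eq:t2-GL} of Theorem \ref{th1}, namely $\|\Delta\|_{2,1} \le 16 s\lam_{\max}^2/(\kappa^2\lam_{\min})$, into the previous display gives
$$
2C\|\Delta\|_{2,1} \;\le\; \frac{2\lam_{\min}\phi}{14\alpha\lam_{\max}s}\cdot\frac{16 s\lam_{\max}^2}{\phi(\alpha-1)/\alpha\cdot\lam_{\min}} \;=\; \frac{16\lam_{\max}}{7(\alpha-1)}.
$$
Hence $\phi\|\Delta^j\| \le \bigl(\tfrac{3}{2} + \tfrac{16}{7(\alpha-1)}\bigr)\lam_{\max} = c\,\lam_{\max}$. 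Taking the maximum over $j$ yields \eqref{CI}.

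For the support recovery claim \eqref{lb}–\eqref{lb1}, the argument is immediate from \eqref{CI} on the same event $\mathcal{A}$: if $j \in J(\beta^*)$, the reverse triangle inequality and \eqref{lb} give $\|\hat\beta^j\| \ge \|(\beta^*)^j\| - \|\Delta^j\| > 2c\lam_{\max}/\phi - c\lam_{\max}/\phi = c\lam_{\max}/\phi$, placing $j$ in $\hat J$; conversely, if $j\notin J(\beta^*)$ then $(\beta^*)^j = 0$ and $\|\hat\beta^j\| = \|\Delta^j\| \le c\lam_{\max}/\phi$, so $j \notin \hat J$. The main obstacle is the careful bookkeeping of the perturbation bounds in the first part, particularly the factor-of-two loss incurred by the potentially nonzero diagonal entries of $\Psi[j,j']$ for $j \neq j'$ (this is what produces the constant $14$ in place of $7$ compared with the block-diagonal multi-task setting of \cite{colt2009}), and the need to invoke Lemma \ref{lem:2} with the precise dependence on $\alpha$ that yields the stated constant $c$.
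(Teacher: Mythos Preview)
Your proposal is correct and follows essentially the same route as the paper: start from inequality \eqref{eq:2}, isolate $\phi\Delta^j$, bound the perturbation by $\tfrac{2\lambda_{\min}\phi}{14\alpha\lambda_{\max}s}\|\Delta\|_{2,1}$ via the coherence bounds of Assumption~\ref{MC}, then plug in \eqref{eq:t2-GL} together with Lemma~\ref{lem:2} to arrive at the constant $c$. The only cosmetic difference is bookkeeping: the paper pads all blocks to a common dimension $K_\infty$ and splits the perturbation $\|(\Psi-\phi I)\Delta\|_{2,\infty}$ according to $t=t'$ versus $t\neq t'$ (applying Minkowski's inequality on the diagonal piece), whereas you first separate $j'=j$ from $j'\neq j$ by the triangle inequality and bound each $\|\Psi[j,j']\Delta^{j'}\|$ individually; both orderings produce the same factor $2C$ and hence the same final constant.
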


\begin{proof}
Set $K_\infty = \max_{1 \leq j \leq M} K_j$. We define first for any
$j,j'\in \mathbb N_M$ the $K_\infty \times K_\infty$ matrix
$\tilde{\Psi}[j,j']$ as follows. If $j\neq j'$ we have
$(\tilde{\Psi}[j,j'])_{t \in \N_{K_j}, t'\in \N_{K_{j'}}}
= \Psi[j,j']$ and $(\tilde{\Psi}[j,j'])_{t, t'}=0$ if $t
>K_j$ or if $t'>K_{j'}$. If $j=j'$ we have $(\tilde{\Psi}[j,j])_{t,t'\in \N_{K_j}}
= \Psi[j,j] - \phi I_{K_j \times K_j}$ and
$(\tilde{\Psi}[j,j])_{t, t'}=0$ if $t
>K_j$ or if $t'>K_{j}$. Similarly, for any $\Delta \in \mathbb
R^K$ and any $j\in \mathbb N_M$ we set $\tilde{\Delta}^j\in \mathbb
R^{K_\infty}$ such that $(\tilde{\Delta}^j_{t})_{t \in \N_{K_j}} = \Delta^j$ and $\tilde{\Delta}^j_{t} =0$ for any $t >K_j$.

Set $\Delta = \hat{\beta}-\beta^{*}$. We have
\begin{align}\label{interm-supnorm-1}
  \phi\|\Delta\|_{2,\infty} &\leq \|\Psi\Delta\|_{2,\infty} +
\|(\Psi-\phi I_{K\times K})\Delta\|_{2,\infty}.
\end{align}

Using Cauchy-Schwarz's inequality we obtain
\begin{align}\label{interm-supnorm-2}
\|(\Psi-\phi I_{K\times K})\Delta \|_{2,\infty} &=\max_{1\leq j \leq
M} \left[\sum_{t =
1}^{K_j}\left(\sum_{j'=1}^{M}\sum_{t'=1}^{K_{j'}}\left(\tilde{\Psi}[j,j']\right)_{t,t'}
\tilde{\Delta}_{t'}^{j'} \right)^{2}\right]^{1/2}
 \nonumber\\
&\leq \max_{1\leq j \leq M} \left[\sum_{t =
1}^{K_j}\left(\sum_{j'=1}^{M}\left(\tilde{\Psi}[j,j']\right)_{t,t}
\tilde{\Delta}_{t}^{j'} \right)^{2}\right]^{1/2}\nonumber\\
&\hspace{.0cm}+ \max_{1\leq j \leq M}
\left[\sum_{t=1}^{K_j}\left(\sum_{j'=1}^{M}\sum_{t'=1,t'\neq
t}^{K_{j'}}\left(\tilde{\Psi}[j,j']\right)_{t,t'}
\tilde{\Delta}_{t'}^{j'} \right)^{2}\right]^{1/2}\hspace{-.5truecm}.
\end{align}
We now treat the first term on the right-hand side of
(\ref{interm-supnorm-2}). We have, using Assumption \ref{MC} and
Minkowski's inequality for the Euclidean norm in $\mathbb R^{K_j}$,
that
\begin{align*}
\max_{1\leq j \leq M} \left[\sum_{t=
1}^{K_j}\left(\sum_{j'=1}^{M}\left(\tilde{\Psi}[j,j']\right)_{t,t}
\tilde{\Delta}_{t}^{j'} \right)^{2}\right]^{1/2} &\leq
\frac{\lambda_{\min }\phi}{14\alpha\lambda_{\max}s} \left[\sum_{t
= 1}^{K_j}\left(\sum_{j'=1}^{M}|\tilde{\Delta}_{t}^{j'}|
\right)^{2}\right]^{1/2}\\
&\leq \frac{\lambda_{\min }\phi}{14\alpha\lambda_{\max}s}
\|\tilde{\Delta}\|_{2,1} \\
& \leq \frac{\lambda_{\min
}\phi}{14\alpha\lambda_{\max}s} \|\Delta\|_{2,1},\nonumber
\end{align*}
since $\|\tilde{\Delta}\|_{2,1} \leq \|\Delta\|_{2,1}$ by definition
of $\tilde{\Delta}$. Next we treat the second term in the right-hand
side of (\ref{interm-supnorm-2}). Cauchy-Schwarz's inequality gives
\begin{align*}
&\max_{1\leq j \leq M}
\left[\sum_{t=1}^{K_j}\left(\sum_{j'=1}^{M}\sum_{t'=1,t'\neq
t}^{K_{j'}}\left(\tilde{\Psi}[j,j']\right)_{t,t'}
\tilde{\Delta}_{t'}^{j'} \right)^{2}\right]^{1/2}\nonumber\\
&\hspace{3cm}\leq \frac{\lambda_{\min}\phi}{14\alpha
\lambda_{\max}s}\max_{1\leq j \leq M} \left[
\frac{1}{K_{j}}\sum_{t=1}^{K_j}
\left(\sum_{j'=1}^{M}\sum_{t'=1}^{K_{j'}}\frac{|\tilde{\Delta}_{t'}^{j'}|}{\sqrt{K_{j'}}}
\right)^{2} \right]^{1/2}
 \nonumber\\
&\hspace{3cm}\leq
 \frac{\lambda_{\min}\phi}{14\alpha
\lambda_{\max}s}\sum_{j'=1}^{M}\sum_{t'=1}^{K_{j'}}\frac{|\tilde{\Delta}_{t'}^{j'}|}{\sqrt{K_{j'}}}
\nonumber\\
&\hspace{3cm}\leq \frac{\lambda_{\min}\phi}{14\alpha \lambda_{\max}
s}\|\tilde{\Delta}\|_{2,1} \leq \frac{\lambda_{\min}\phi}{14\alpha
\lambda_{\max} s}\|\Delta\|_{2,1}.
\end{align*}

Combining the four above displays we get
\begin{equation*}
\|\Delta\|_{2,\infty} \leq \frac{1}{\phi}\|\Psi\Delta\|_{2,\infty} +
\frac{2\lambda_{\min}}{14\alpha \lambda_{\max} s }\|\Delta\|_{2,1}.
\end{equation*}
Thus, by inequalities \eqref{eq:2} and \eqref{eq:t2-GL},
with probability at least $1 - 2M^{1-q}$, it holds that
\begin{equation*}
\|\Delta\|_{2,\infty} \leq \left(\frac{3}{2\phi} +\frac{16}{7 \alpha
\kappa^{2}}\right)\lambda_{\max}.
\end{equation*}
By Lemma \ref{lem:2}, $\alpha \kappa^{2}=(\alpha-1)\phi$, which
yields the first result of the theorem. The second result follows
from the first one in an obvious way.
\end{proof}

Assumption of type (\ref{lb}) is inevitable in the context of
selection of sparsity pattern. It says that the vectors
$(\beta^*)^j$ cannot be arbitrarily close to 0 for $j$ in the
pattern. Their norms should be at least somewhat larger than the
noise level.

Theorems \ref{th1} and \ref{est-supnorm} imply the following
corollary.

\begin{corollary}\label{p-norm}
Let the assumptions of Theorem \ref{th1} be satisfied and let
Assumption \ref{MC} hold with the same $s$. Then with probability at
least $1 - 2M^{1-q}$, for any solution $\hat{\beta}$ of problem
(\ref{eq:opt}) and any $1\le p <\infty$ we have that
\begin{equation}\label{eq:pnorm}
\|\hat{\beta}-\beta^{*}\|_{2,p} \leq
\frac{c_1}{\phi}\lambda_{\max}\left( \sum_{j\in
J(\beta^*)}\frac{\lambda_j^2}{\lambda_{\min}\lambda_{\max}}\right)^{\frac{1}{p}}
,
\end{equation}
where
\begin{equation}\label{eq:c1} c_{1} = \left(
\frac{16\alpha}{\alpha-1} \right)^{1/p}\left(\frac{3}{2}+
\frac{16}{7(\alpha-1)} \right)^{1-\frac{1}{p}} .
\end{equation}
 If, in addition, (\ref{lb}) holds, then
with the same probability for any solution $\hat{\beta}$ of problem
(\ref{eq:opt}) and any $1\le p <\infty$ we have that
\begin{equation}\label{CI1}
\|\hat{\beta}-\beta^{*}\|_{2,p} \leq
\frac{c_1}{\phi}\lambda_{\max}\left( \sum_{j\in \hat
J}\frac{\lambda_j^2}{\lambda_{\min}\lambda_{\max}}\right)^{\frac{1}{p}},
\end{equation}
where $\hat J$ is defined in (\ref{lb1}).
\end{corollary}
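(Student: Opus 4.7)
The plan is to prove Corollary \ref{p-norm} by interpolation between the $(2,1)$-norm bound from Theorem \ref{th1} and the $(2,\infty)$-norm bound from Theorem \ref{est-supnorm}. The key elementary inequality is that for any vector $v\in\R^K$ and any $1\le p <\infty$,
\begin{equation*}
\|v\|_{2,p}^{p} \;=\; \sum_{j=1}^{M}\|v^{j}\|^{p} \;\le\; \Bigl(\max_{j}\|v^{j}\|\Bigr)^{p-1}\sum_{j=1}^{M}\|v^{j}\| \;=\; \|v\|_{2,\infty}^{\,p-1}\,\|v\|_{2,1},
\end{equation*}
so that $\|v\|_{2,p}\le \|v\|_{2,\infty}^{1-1/p}\|v\|_{2,1}^{1/p}$.

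First, I would apply this with $v=\hat\beta-\beta^{*}$. On the event $\mathcal{A}$ considered in the proofs of Lemma \ref{lem:1}, Theorem \ref{th1} and Theorem \ref{est-supnorm} (which has probability at least $1-2M^{1-q}$), the bound \eqref{CI} gives $\|\hat\beta-\beta^{*}\|_{2,\infty}\le (c/\phi)\lambda_{\max}$ with $c$ as in \eqref{eq:c}, and the bound \eqref{eq:t2-GL} gives $\|\hat\beta-\beta^{*}\|_{2,1}\le (16/\kappa^{2})\sum_{j\in J(\beta^{*})}\lambda_{j}^{2}/\lambda_{\min}$. Second, I would use Lemma \ref{lem:2}, which asserts $\alpha\kappa^{2}=(\alpha-1)\phi$, to replace $1/\kappa^{2}$ by $\alpha/((\alpha-1)\phi)$, turning the $(2,1)$-bound into $\|\hat\beta-\beta^{*}\|_{2,1}\le \bigl(16\alpha/((\alpha-1)\phi)\bigr)\sum_{j\in J(\beta^{*})}\lambda_{j}^{2}/\lambda_{\min}$.

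Third, combining the two estimates via the interpolation inequality yields
\begin{equation*}
\|\hat\beta-\beta^{*}\|_{2,p}\;\le\;\Bigl(\tfrac{c}{\phi}\lambda_{\max}\Bigr)^{1-1/p}\!\left(\tfrac{16\alpha}{(\alpha-1)\phi}\sum_{j\in J(\beta^{*})}\tfrac{\lambda_{j}^{2}}{\lambda_{\min}}\right)^{1/p}\!\!=\tfrac{c_{1}}{\phi}\lambda_{\max}^{1-1/p}\Bigl(\sum_{j\in J(\beta^{*})}\tfrac{\lambda_{j}^{2}}{\lambda_{\min}}\Bigr)^{1/p},
\end{equation*}
where $c_{1}$ is exactly the constant in \eqref{eq:c1}. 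Rewriting $\lambda_{\max}^{1-1/p}=\lambda_{\max}\cdot\lambda_{\max}^{-1/p}$ and absorbing $\lambda_{\max}^{-1/p}$ inside the sum produces the factor $\lambda_{\min}\lambda_{\max}$ in the denominator, giving precisely \eqref{eq:pnorm}.

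Finally, for \eqref{CI1}, under the additional condition \eqref{lb}, Theorem \ref{est-supnorm} guarantees on the same event $\mathcal{A}$ that $\hat{J}=J(\beta^{*})$, so the sum over $J(\beta^{*})$ in \eqref{eq:pnorm} may be replaced by the sum over $\hat{J}$ without any change. No real obstacle is anticipated: the entire argument is a one-line interpolation combined with two already-proved estimates and the algebraic identity from Lemma \ref{lem:2}. The only mildly delicate point is bookkeeping the constant $c_{1}$ so that it matches \eqref{eq:c1}, which is a direct consequence of the $(1-1/p)$ and $1/p$ exponents distributing $c$ and $16\alpha/(\alpha-1)$ respectively.
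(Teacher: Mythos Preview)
Your proposal is correct and follows essentially the same approach as the paper: the paper's proof also applies the interpolation inequality $\|\Delta\|_{2,p}\le \|\Delta\|_{2,1}^{1/p}\|\Delta\|_{2,\infty}^{1-1/p}$, combines \eqref{eq:t2-GL} and \eqref{CI} with $\kappa^2=(1-1/\alpha)\phi$ from Lemma~\ref{lem:2}, and then invokes Theorem~\ref{est-supnorm} to pass from $J(\beta^*)$ to $\hat J$ for \eqref{CI1}. Your write-up is in fact slightly more detailed, since you spell out the elementary proof of the interpolation inequality and the constant bookkeeping explicitly.
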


\begin{proof}
Set $\Delta = \hat{\beta} - \beta$. For any $p\geq 1$ we use the
norm interpolation inequality
\begin{equation*}
\| \Delta \|_{2,p} \leq
 \| \Delta \|_{2,1} ^{\frac{1}{p}}
\| \Delta \|_{2,\infty} ^{1-\frac{1}{p}}.
\end{equation*}
Combining inequalities \eqref{eq:t2-GL} and \eqref{CI} with $\kappa
= \sqrt{(1-1/\alpha)\phi}$ (cf. Lemma \ref{lem:2}) and the last
inequality yields (\ref{eq:pnorm}). Inequality (\ref{CI1}) is then
straightforward in view of Theorem \ref{est-supnorm}.
\end{proof}

Note that we introduce inequalities (\ref{CI}) and (\ref{CI1}) valid
with probability close to 1 because their right-hand sides are data
driven, and so they can be used as confidence bands for the unknown
parameter $\beta^*$ in mixed (2,$p$)-norms.



We finally derive a corollary of Theorem \ref{est-supnorm} for the
multi-task setting, which is straightforward in view of the above
results. 

\begin{corollary}\label{cor-MT-pattern}
Consider the multi-task model \eqref{eq:s1} for $M \geq 2$ and $T,n
\geq 1$. Let the assumptions of Theorem \ref{est-supnorm} be
satisfied and set
$$
\lambda = \frac{2\sqrt{2}\sigma}{\sqrt{nT}}\left(1 + \frac{A \log
M}{T}\right)^{1/2},
$$
where $A>5/2$. Then with probability at least $1-2M^{1-2A/5}$, for
any solution $\hat{\beta}$ of problem (\ref{eq:opt-MT}) and any
$1\le p\le \infty$ we have
\begin{equation}
\frac{1}{\sqrt{T}}\|\hat{\beta}-\beta^{*}\|_{2,p} \leq
\frac{2\sqrt{2}c_1\sigma s^{1/p}}{\sqrt{n}}\left(1 + \frac{A \log
M}{T}\right)^{1/2}\,,
\end{equation}
where $c_1$ is the constant defined in (\ref{eq:c1}) and we set
$x^{1/\infty}=1$ for any $x>0$. If, in addition,
\begin{equation}
\min_{j\in J(\beta^{*})}\frac{1}{\sqrt{T}} \|(\beta^{*})^j\| >
\frac{4\sqrt{2}c\sigma}{\sqrt{n}}\left(1 + \frac{A \log
M}{T}\right)^{1/2}\,,
\end{equation}
then with the same probability for any solution $\hat{\beta}$ of
problem (\ref{eq:opt-MT}) the set of indices
\begin{equation}
\hat{J} = \left\{j: \frac{1}{\sqrt{T}}\|\hat{\beta}^j\| >
\frac{2\sqrt{2}c\sigma}{\sqrt{n}}\left(1 + \frac{A \log
M}{T}\right)^{1/2}\,\right\}
\end{equation}
estimates correctly the sparsity pattern $J(\beta^{*})$, that is,
$$
\hat{J} = J(\beta^{*}).
$$
\end{corollary}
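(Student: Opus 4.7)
The plan is to derive Corollary \ref{cor-MT-pattern} by direct substitution of the multi-task parameters into Theorem \ref{est-supnorm} and Corollary \ref{p-norm}. The corollary is specializing the general Group Lasso bounds (on $\|\hat\beta-\beta^*\|_{2,p}$ for $1\le p\le\infty$ and on pattern recovery) to the multi-task setup of Section \ref{sec:MT-model}, where $K=MT$, $N=nT$, $K_j=T$ and $\Psi_j=(1/T)I_{T\times T}$ for every $j\in\N_M$, so that $\phi=1/T$. Also $\lambda_j\equiv\lambda$, hence $\lambda_{\min}=\lambda_{\max}=\lambda$.

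First I would note, as in the discussion preceding Corollary \ref{co1}, that the specific choice $\lambda=\frac{2\sqrt{2}\sigma}{\sqrt{nT}}(1+A\log M/T)^{1/2}$ with $A>5/2$ is bounded below by the right-hand side of \eqref{eq:lam} specialized to the multi-task case (taking $q=2A/5$), so the probability bound $1-2M^{1-2A/5}$ follows from the union bound used to obtain Lemma~\ref{lem:1}. Thus all conclusions of Theorem \ref{th1}, Theorem \ref{est-supnorm} and Corollary \ref{p-norm} hold with probability at least $1-2M^{1-2A/5}$ on the event $\calA$.

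Next I would plug the multi-task values into Corollary \ref{p-norm}. Because $\lambda_j^2/(\lambda_{\min}\lambda_{\max})=1$ for each $j$, the bound \eqref{eq:pnorm} reduces to
\[
\|\hat\beta-\beta^*\|_{2,p}\;\le\;\frac{c_1}{\phi}\lambda\,|J(\beta^*)|^{1/p}\;\le\;c_1\,T\lambda\,s^{1/p}.
\]
Dividing both sides by $\sqrt T$ and substituting $\sqrt T\lambda=\frac{2\sqrt{2}\sigma}{\sqrt n}(1+A\log M/T)^{1/2}$ yields the first displayed inequality of the corollary. The same reasoning applied to \eqref{CI} of Theorem \ref{est-supnorm} (the case $p=\infty$ with $s^{1/\infty}=1$) gives the bound in the $(2,\infty)$-norm; for $p=\infty$ one uses \eqref{CI} directly rather than the interpolation inequality.

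Finally, for the pattern-recovery part I would translate the condition \eqref{lb} and the threshold \eqref{lb1} of Theorem \ref{est-supnorm} into the multi-task normalization. The threshold $\frac{c}{\phi}\lambda_{\max}=cT\lambda$ becomes $\frac{1}{\sqrt T}\cdot cT\lambda=c\sqrt T\lambda=\frac{2\sqrt{2}c\sigma}{\sqrt n}(1+A\log M/T)^{1/2}$ after dividing $\|\hat\beta^j\|$ by $\sqrt T$, and the lower bound $\frac{2c}{\phi}\lambda_{\max}$ required on $\min_{j\in J(\beta^*)}\|(\beta^*)^j\|$ becomes the stated $\frac{4\sqrt{2}c\sigma}{\sqrt n}(1+A\log M/T)^{1/2}$ after the same rescaling. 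Then $\hat J=J(\beta^*)$ follows from the conclusion of Theorem \ref{est-supnorm}. There is no real obstacle here; the only thing to be careful with is the bookkeeping of the factors $T$ coming from $\phi=1/T$ and $\sqrt T$ coming from the $\sqrt T$-normalization of the per-variable norms across tasks, which makes the inequalities scale-invariant in the natural way.
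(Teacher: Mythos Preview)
Your proposal is correct and matches the paper's approach exactly: the paper states that this corollary ``is straightforward in view of the above results'' and gives no further proof, and what you describe---substituting the multi-task parameters $K_j=T$, $\phi=1/T$, $\lambda_j\equiv\lambda$ into Theorem~\ref{est-supnorm} and Corollary~\ref{p-norm}, with $q=2A/5$ to obtain the probability bound as in Section~\ref{Section2.1}---is precisely that specialization.
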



\section{Minimax lower bounds for arbitrary estimators}
\label{sec:minimax_lb}

In this section we consider again the multi-task model as in
Sections \ref{sec:MT-model} and \ref{Section2.1}. We will show that
the rate of convergence 
obtained in Corollary \ref{co1} is optimal in a minimax sense (up to
a logarithmic factor) for all estimators over a class of group
sparse vectors. 
This will be done under the following mild condition on matrix $X$. 
\begin{assumption}\label{SPD}
There exist positive constants $\kappa_1$ and $\kappa_2$ such that
for any vector $\Delta\in \mathbb R^{MT}\setminus \{0\}$ with
$M(\Delta)\leq 2s$ we have
$$
(a) \quad \frac{\|X\Delta\|^2}{n\|\Delta\|^2}\ge \kappa_1^2, \quad
\quad \quad \quad (b)\quad \frac{\|X\Delta\|^2}{n\|\Delta\|^2} \leq
\kappa_2^2.
$$
\end{assumption}
Note that part (b) of Assumption~\ref{SPD} is automatically
satisfied with $\kappa_2^2=\phi_{\rm MT}$ where $\phi_{\rm MT}$ is
the spectral norm of matrix $X^{\trans}X/n$. The reason for
introducing this assumption is that the $2s$-restricted maximal
eigenvalue $\kappa_2^2$ can be much smaller than the spectral norm
of $X^{\trans}X/n$, which would result in a sharper lower bound, see
Theorem \ref{th_minimax_lb} below.

In what follows we fix $T\ge1, M\ge2$, $s\le M/2$ and denote by
$GS(s,M,T)$ the set of vectors $\beta\in \R^{MT}$ such that
$M(\beta)\leq s$. Let $\ell\,:\, \mathbb R^{+} \rightarrow \mathbb
R^{+}$ be a nondecreasing function such that $\ell(0)= 0$ and $\ell
\not\equiv 0$.

\begin{theorem} \label{th_minimax_lb} Consider the multi-task model \eqref{eq:s1} for $M \geq 2$
and $T,n \geq 1$. Assume that $W\in\R^N$ is a random vector with
i.i.d. $\GD(0,\sigma^2)$ gaussian components, $\sigma^2>0$. Suppose
that $s\leq M/2$ and let part (b) of Assumption~\ref{SPD} be
satisfied. Define
$$
\psi_{n,p} = \frac{\sigma}{\kappa_2}\frac{s^{1/p}}{\sqrt{n}}
\left(1+ \frac{\log (eM/s)}{T} \right)^{1/2}, \quad \quad 1\le p\le
\infty,
$$
where we set $s^{1/\infty}=1$. Then there exist positive constants
${\bar b}, {\bar c}$ depending only on $\ell(\cdot)$ and $p$ such
that
\begin{equation}\label{eq:minm_lb2}
 \inf_{\tau}\sup_{\beta^*\in GS(s,M,T)}\mathbb
 E \ell\left({\bar b}\psi_{n,p}^{-1}\frac{1}{\sqrt{T}}
 \|\tau - \beta^* \|_{2,p}\right)
  \geq {\bar c},
 \end{equation}
where $\inf_{\tau}$ denotes the infimum over all estimators ${\tau}$
of $\beta^*$. If, in addition, part (a) of Assumption~\ref{SPD} is
satisfied, then there exist positive constants ${\bar b}, {\bar c}$
depending only on $\ell(\cdot)$ such that
  \begin{equation}\label{eq:minm_lb1}
 \inf_{\tau}\sup_{\beta^*\in GS(s,M,T)}\mathbb
 E \ell\left({\bar b}\psi_{n,2}^{-1}\frac{1}{\kappa_1\sqrt{nT}}\|X(\tau - \beta^*) \|\right)
  \geq {\bar c}.
 \end{equation}
\end{theorem}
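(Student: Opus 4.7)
The plan is to use the standard reduction of minimax lower bounds to a finite multiple-hypothesis testing problem (e.g., Theorem~2.5 of Tsybakov's \emph{Introduction to Nonparametric Estimation}): given $\{\beta_0,\ldots,\beta_{N_H}\}\subset GS(s,M,T)$ which is $2\delta$-separated in the loss metric and whose pairwise Kullback-Leibler divergences $\|X(\beta_i-\beta_j)\|^{2}/(2\sigma^{2})$ are bounded by $\alpha\log N_H$ for some $\alpha<1/8$, one obtains a lower bound of the form $\inf_{\tau}\sup_i\E \ell(\delta^{-1}\|\tau-\beta_i\|)\geq \bar c$. Since $\psi_{n,p}^{2}$ decomposes (up to absolute constants) into a ``parametric'' piece $\sigma^{2}s^{2/p}/(\kappa_{2}^{2}n)$ and a ``support-selection'' piece $\sigma^{2}s^{2/p}\log(eM/s)/(\kappa_{2}^{2}nT)$, I would build two hypothesis families $\mathcal F_{1}, \mathcal F_{2}$ tuned to each piece and keep the better of the two resulting lower bounds. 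Throughout, the KL upper bound follows from Assumption~\ref{SPD}(b) via $\|X\Delta\|^{2}\le n\kappa_{2}^{2}\|\Delta\|^{2}$, which is legitimate because every $\Delta=\beta_{i}-\beta_{j}$ I build satisfies $M(\Delta)\le 2s$.

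For $\mathcal F_{1}$ (parametric piece), fix $J_{0}\subset\N_{M}$ with $|J_{0}|=s$ and let $V\subset\{-1,+1\}^{T}$ be a Varshamov-Gilbert packing with $|V|\ge 2^{T/8}$ and pairwise Hamming distance at least $T/8$. Candidates $\beta_{\omega}$ are indexed by $\omega=(\omega_{j})_{j\in J_{0}}\in V^{s}$, with $\beta_{\omega}^{j}=\gamma\omega_{j}$ if $j\in J_{0}$ and $\beta_{\omega}^{j}=0$ otherwise. A naive Fano argument on $V^{s}$ yields only $s^{1/2}$-type separation in $(2,p)$-norm, which is too weak when $p<2$, so I would pass to a subfamily $\tilde\Omega\subset V^{s}$ in which every pair disagrees in at least $s/2$ group coordinates; a second Gilbert-Varshamov argument applied to the product alphabet $V$ yields such $\tilde\Omega$ with $\log|\tilde\Omega|\gtrsim sT$. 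Then for any $\omega\ne\omega'$ in $\tilde\Omega$, at least $s/2$ groups differ, each contributing at least $(2\gamma\sqrt{T/8})^{p}$ to $\|\beta_{\omega}-\beta_{\omega'}\|_{2,p}^{p}$, so $T^{-1/2}\|\beta_{\omega}-\beta_{\omega'}\|_{2,p}\gtrsim\gamma s^{1/p}$. The maximal KL is at most $2n\kappa_{2}^{2}\gamma^{2}sT/\sigma^{2}$, so choosing $\gamma=c\sigma/(\kappa_{2}\sqrt n)$ with $c$ small keeps KL well below $\log|\tilde\Omega|/16$ and produces the first term of $\psi_{n,p}$.

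For $\mathcal F_{2}$ (support-selection piece), I would invoke the standard combinatorial packing of $s$-subsets of $\N_{M}$ (cf.\ the lemma used in~\cite{lounici2008snc}): for $s\le M/2$ there exists $\mathcal J\subset\{J\subset\N_{M}:|J|=s\}$ with $\log|\mathcal J|\ge c_{0}s\log(eM/s)$ and $|J\triangle J'|\ge s/2$ for all $J\ne J'\in\mathcal J$. Fix $u\in\R^{T}$ with $\|u\|=\sqrt T$ (e.g.\ $u=(1,\dots,1)^{\trans}$) and set $\beta_{J}^{j}=\gamma u\,\mathbf 1\{j\in J\}$; then $M(\beta_{J})=s$, $\|\beta_{J}-\beta_{J'}\|^{2}=\gamma^{2}T|J\triangle J'|$, and $\|\beta_{J}-\beta_{J'}\|_{2,p}^{p}=(\gamma\sqrt T)^{p}|J\triangle J'|$. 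Taking $\gamma=c\sigma\sqrt{\log(eM/s)/(nT)}/\kappa_{2}$ keeps the pairwise KL below $\log|\mathcal J|/16$ and yields $T^{-1/2}\|\beta_{J}-\beta_{J'}\|_{2,p}\gtrsim\sigma s^{1/p}\sqrt{\log(eM/s)/(nT)}/\kappa_{2}$, i.e.\ the second term of $\psi_{n,p}$. Applying the testing lemma to $\mathcal F_{1}$ and $\mathcal F_{2}$ separately and retaining the larger of the two bounds proves \eqref{eq:minm_lb2}.

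To obtain \eqref{eq:minm_lb1}, I would re-use the same two families but measure separation via the prediction seminorm $\beta\mapsto\|X\beta\|/(\kappa_{1}\sqrt{nT})$. Assumption~\ref{SPD}(a), applicable because $M(\Delta)\le 2s$, gives $\|X\Delta\|^{2}\ge n\kappa_{1}^{2}\|\Delta\|^{2}$, which transfers the $\ell_{2}$-separations established for $\mathcal F_{1}$ (with $p=2$) and $\mathcal F_{2}$ into prediction-norm separations of order $\kappa_{1}\psi_{n,2}\sqrt{nT}$, while the KL budget is unchanged. I expect the principal obstacle to be precisely the group-level Gilbert-Varshamov refinement needed for $\mathcal F_{1}$: without the ``at least $s/2$ groups differ'' condition one only recovers an $s^{1/2}$ factor in place of $s^{1/p}$ in $(2,p)$-norm separation when $p<2$, so ensuring that $\tilde\Omega$ is simultaneously large enough that $\log|\tilde\Omega|\gtrsim sT$ and strongly separated at the group level is the delicate combinatorial point of the construction.
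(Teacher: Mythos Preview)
Your approach is correct and structurally parallels the paper's: both decompose the rate into a parametric and a support-selection piece, and your family $\mathcal F_2$ is essentially the paper's construction for the regime $T\leq\log(eM/s)$. The substantive difference is in the parametric family $\mathcal F_1$. You propose a two-level Gilbert--Varshamov construction (first pack $\{-1,+1\}^T$ to obtain $V$, then pack $V^s$ over the alphabet $V$ so that any two codewords disagree in at least $s/2$ group positions) and rightly identify this second step as the delicate one. The paper avoids it: it runs a single Varshamov--Gilbert argument on the flattened cube $\{0,1\}^{sT}$, obtaining a set with total Hamming separation at least $sT/8$, and then uses an elementary pigeonhole observation (Lemma~\ref{ineq_p}: if the total Hamming distance is at least $sT/8$, then at least $s/16$ of the groups must each carry at least $T/16$ of it) to convert this directly into the required $(2,p)$-norm separation of order $s^{1/p}\sqrt T$. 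The pigeonhole route is shorter and works uniformly once $sT\geq 8$, without the alphabet-size and small-$T$ bookkeeping your iterated construction would entail; your route, on the other hand, builds the group-level separation in from the start rather than recovering it a posteriori.
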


\begin{proof} Fix $p$ and write for brevity $\psi_{n}=\psi_{n,p}$
where it causes no ambiguity. Throughout this proof we set
$x^{1/\infty}=1$ for any $x\ge0$. We consider first the case $T\leq
\log(eM/s)$. Set $\textbf{0} = (0,\ldots,0)\in\mathbb R^T$,
$\textbf{1} = (1,\ldots,1)\in\mathbb R^T$. Define the set of vectors
$$
\Omega = \left\lbrace \omega \in \mathbb
  R^{MT}\, : \, \omega^j \in \{\textbf{0},\textbf{1}\},\, j=1,\dots,M,\; \text{and}\; M(\omega)\leq s
  \right\rbrace,
$$
and its dilation
  $$
    \mathcal C (\Omega)= \left\lbrace \gamma\psi_{n,p}
    \omega /s^{1/p}\,:\,
    \omega\in \Omega
    \right\rbrace,
  $$
where $\gamma >0$ is an absolute constant to be chosen later. Note
that $\mathcal C(\Omega) \subset GS(s,M,T)$.

For any $\omega,\omega'$ in $\Omega$ we have $M(\omega-\omega') \le
2s$. Thus, for $\beta=\gamma\psi_{n,p} \omega /s^{1/p},\
\beta'=\gamma\psi_{n,p} \omega'/s^{1/p}$ parts (a) and (b) of
Assumption \ref{SPD} imply respectively
\begin{equation}\label{eq:minm_lba}
\frac{1}{n}\|X\beta - X\beta'\|^2 \ge
\frac{\kappa_1^2\gamma^2\psi_{n,p}^2
\rho(\omega,\omega')T}{s^{2/p}},
\end{equation}
\begin{equation}\label{eq:minm_lbb}
\frac{1}{n}\|X\beta - X\beta'\|^2 \leq
\frac{\kappa_2^2\gamma^2\psi_{n,p}^2 \rho(\omega,\omega')T}{s^{2/p}}
\end{equation}
where $\rho(\omega,\omega')= \sum_{j=1}^MI\{\omega^j\neq
(\omega')^j\}$ and $I\{\cdot\}$ denotes the indicator function. This
and the definition of $\psi_{n,p}$ yield that if part (a) of
Assumption~\ref{SPD} holds, then for all $\omega,\omega'\in\Omega$
we have
\begin{equation}\label{eq:minm_lb3}
\frac{1}{nT}\|X\beta - X\beta'\|^2 \geq \gamma^2\frac{\kappa_1^2
\sigma^2}{\kappa_2^2 n}\left(1+ \frac{\log(eM/s)}{T}\right)
\rho(\omega,\omega').
\end{equation}
Also, by definition of $\beta,\beta'$,
\begin{equation}\label{eq:minm_lb4} \frac{1}{\sqrt{T}}
\|\beta - \beta'\|_{2,p} = \frac{\gamma\sigma}{\kappa_2\sqrt{
n}}\left(1+ \frac{\log(eM/s)}{T}\right)^{1/2}
\left(\rho(\omega,\omega')\right)^{1/p}I\{\omega\neq (\omega')\}.
\end{equation}
For $\theta\in \R^N$, we denote by $P_\theta$ the probability
distribution of $\GD(\theta, \sigma^2 I_{N\times N})$ Gaussian
random vector. We denote by ${\cal K}(P,Q)$ the Kullback-Leibler
divergence between the probability measures $P$ and $Q$. Then, under
part (b) of Assumption~\ref{SPD},
\begin{eqnarray}\label{eq:minm_lb5}
  {\cal K}(P_{X\beta},P_{X\beta'}) 
  &=& \frac{1}{2\sigma^2}\|X\beta - X\beta'\|^2\nonumber\\
  &\leq& \frac{\kappa_2^2 \gamma^2}{2\sigma^2 s^{2/p} } \,n
  \psi_{n,p}^2 \rho(\omega,\omega')T\nonumber\\
  &\leq& \gamma^2 s [T+ \log(eM/s)]\nonumber\\
  &\leq& 2\gamma^2 s \log(eM/s)
\end{eqnarray}
where we used that $\rho(\omega,\omega')\le 2s$ for all $\omega,
\omega'\in\Omega$. Lemma 8.3 in \cite{RigTsy10} guarantees the
existence of a subset $\mathcal N$ of $\Omega$ such that
\begin{eqnarray}\label{eq:minm_lb6}
    \log(|\mathcal N|)&\geq& \tilde{c}s\log\left(\frac{eM}{s}
    \right)\\
    \rho(\omega,\omega')&\geq& s/4,\forall \omega,\omega' \in \mathcal{N},\ \omega\neq
\omega',\nonumber
\end{eqnarray}
for some absolute constant $\tilde{c}>0$, where $|\mathcal N|$
denotes the cardinality of $\mathcal N$. Combining this with
(\ref{eq:minm_lb3}) and (\ref{eq:minm_lb4}) we find that the finite
set of vectors $\mathcal C(\mathcal N)$ is such that, for all
$\beta,\beta'\in \mathcal C(\mathcal N)$, $\beta\ne\beta'$,
$$ \frac{1}{\sqrt{T}}\|\beta - \beta'\|_{2,p} \geq \frac{\gamma\sigma s^{1/p}}{4^{1/p}\kappa_2 \sqrt{n}}\left(1+
\frac{\log(eM/s)}{T}\right)^{1/2}=\frac{\gamma}{4^{1/p}}\psi_{n,p}\,,
$$
and under part (a) of Assumption~\ref{SPD},
$$
\frac{1}{nT}\|X\beta - X\beta'\|^2 \geq \gamma^2\frac{\kappa_1^2
\sigma^2 s}{4\kappa_2^2 n}\left(1+ \frac{\log(eM/s)}{T}\right)=
\frac{\gamma^2}{4}\kappa_1^2\psi_{n,2}^2\,.
$$
Furthermore, by (\ref{eq:minm_lb5}) and (\ref{eq:minm_lb6}) for all
$\beta,\beta'\in \mathcal C(\mathcal N)$ under part (b) of
Assumption~\ref{SPD} we have
$$
{\cal K}(P_{X\beta},P_{X\beta'}) \leq \frac{1}{16}
\log\left(|\mathcal N|\right) = \frac{1}{16} \log\left(|\mathcal
C(\mathcal N)|\right)
$$
for an absolute constant $\gamma>0$ chosen small enough. Thus, the
result follows by application of Theorem 2.7 in \cite{tsy09}.

Consider now the case $T > \log(eM/s)$. Introduce the set of vectors
$$
 \Omega' = \left\lbrace \omega\in\mathbb R^{MT}\, : \,
 \omega = (\omega^1,\ldots,\omega^{M}),~
\omega^j \in \{0,1\}^T~\text{if}~ j \leq s~\text{and}~\omega^j =
{\bf 0}~\text{otherwise}\right\rbrace,
$$
and the associated dilated set $\mathcal C(\Omega')$ defined as
above. Note that $\mathcal C(\Omega') \subset GS(s,M,T)$.

For any $\omega,\omega' \in \Omega'$ we define
$\rho'(\omega,\omega') = \sum_{j=1}^{M}\sum_{t=1}^{T}I\{\omega_{tj}
\neq \omega'_{tj}\}=\sum_{j=1}^{s}\sum_{t=1}^{T}I\{\omega_{tj} \neq
\omega'_{tj}\}$.

We assume first that $Ts\ge 8$. Then Varshamov-Gilbert Lemma (see
Lemma 2.9 in \cite{tsy09}) guarantees that there exists a subset
$\mathcal N'$ of $\Omega'$ such that
\begin{eqnarray}
  \label{eq:minm_lb7}
  |\mathcal N'| &\geq& 2^{Ts/8},\\
  \label{eq:minm_lb7b}
    \rho'(\omega,\omega') &\geq& \frac{Ts}{8}, \forall \omega,\omega' \in \mathcal{N'},\ \omega\neq
\omega'.
  \nonumber
\end{eqnarray}
Next for any $\omega, \omega' \in \mathcal N'$ we have
$M(\omega-\omega')\le 2s$, and thus under parts (a) and (b) of
Assumption~\ref{SPD} we have, respectively,
$$
\frac{1}{n}\|X\beta - X\beta'\|^2\ge
\frac{\kappa_1^2\gamma^2\psi_{n}^2
\rho'(\omega,\omega')}{s^{2/p}}\,, \quad \quad \frac{1}{n}\|X\beta -
X\beta'\|^2 \leq \frac{\kappa_2^2 \gamma^2\psi_{n}^2
\rho'(\omega,\omega')}{ s^{2/p}}
$$
where $\beta=\gamma\psi_{n} \omega /s^{1/p},\ \beta'=\gamma\psi_{n}
\omega'/s^{1/p}$ are any two elements of $\mathcal C(\mathcal N')$.

Now, using Lemma \ref{ineq_p} in the Appendix we get that, for all
$\omega,\omega' \in \mathcal{N'}$ such that $\omega\neq \omega'$,
\begin{equation}\label{eq:Varsham}
\|\omega - \omega'\|_{2,p} \ge
\left(\frac{s}{16}\right)^{1/p}\frac{\sqrt{T}}{4}\,, \quad \forall \
\ 1\le p \le \infty.
\end{equation}
Thus, for all $\beta,\beta'\in \mathcal C(\mathcal N')$ such that
$\beta\ne\beta'$ we have
\begin{eqnarray*}\frac{1}{\sqrt{T}}\|\beta -
\beta'\|_{2,p} &=& \frac{\gamma\psi_{n}}{s^{1/p}\sqrt{T}}\|\omega -
\omega'\|_{2,p} \ge \frac{\gamma}{16^{1/p}4}\psi_n
\end{eqnarray*}
(recall that $\psi_n=\psi_{n,p}$), and under part (a) of
Assumption~\ref{SPD},
\begin{eqnarray*}
    \frac{1}{nT}\|X\beta -
X\beta'\|^2 &\geq&  \frac{\gamma^2}{8}\frac{s\kappa_1^2
\sigma^2}{\kappa_2^2 n}\left(1+ \frac{\log(eM/s)}{T}\right)=
\frac{\gamma^2}{8}\kappa_1^2\psi_{n,2}^2.
\end{eqnarray*}
Furthermore, for all $\beta,\beta'\in \mathcal C(\mathcal N')$ under
part (b) of Assumption~\ref{SPD},
\begin{eqnarray*}
{\cal K}(P_{X\beta},P_{X\beta'}) &\leq& 2\gamma^2 sT \leq
\frac{1}{16}\log(|C(\mathcal N')|),
\end{eqnarray*}
where, in view of (\ref{eq:minm_lb7}), the last inequality holds for
an absolute constant $\gamma>0$ chosen small enough. We apply again
Theorem 2.7 in \cite{tsy09} to get the result.

Finally, if $T> \log(eM/s)$ and $Ts< 8$, then the rate $\psi_n$ is
of the order $1/n$. This is the standard parametric rate and the
lower bounds are easily obtained by reduction to distinguishing
between two elements of $GS(s,M,T)$.
\end{proof}

As a consequence of Theorem~\ref{th_minimax_lb}, we get, for
example, the lower bounds for the squared loss $\ell(u)=u^2$ and for
the indicator loss $\ell(u)=I\{u\ge 1\}$. The indicator loss is
relevant for comparison with the upper bounds of
Corollaries~\ref{co1} and~\ref{cor-MT-pattern}. For example,
Theorem~\ref{th_minimax_lb} with this loss and $p=1,2$ implies that
there exists $\beta^*\in GS(s,M,T)$ such that, for any estimator
$\tau$ of $\beta^*$,
$$
\frac{1}{\sqrt{nT}}\|X(\tau - \beta^*) \| \ge C\sqrt{\frac{s}{n}}
\left(1+ \frac{\log (eM/s)}{T} \right)^{1/2}
$$
and
$$
\frac{1}{\sqrt{T}}\|\tau - \beta^*\| \ge C\sqrt{\frac{s}{n}}
\left(1+ \frac{\log (eM/s)}{T} \right)^{1/2}, \quad
\frac{1}{\sqrt{T}}\|\tau - \beta^*\|_{2,1} \ge C\frac{s}{\sqrt{n}}
\left(1+ \frac{\log (eM/s)}{T} \right)^{1/2}
$$
with a positive probability (independent of $n, s,M,T$) where $C>0$
is some constant. The rate on the right-hand side of these
inequalities is of the same order as in the corresponding upper
bounds in Corollary~\ref{co1}, modulo that $\log M$ is replaced here
by $\log (eM/s)$. We conjecture that the factor $\log (eM/s)$ and
not $\log M$ corresponds to the optimal rate; actually, we know that
this conjecture is true when $T=1$ and the risk is defined by the
prediction error with $\ell(u)=u^2$ \cite{RigTsy10}.

A weaker version of Theorem~\ref{th_minimax_lb}, with $\ell(u)=u^2$,
$p=2$ and suboptimal rate of the order $[s\log (M/s) /(nT)]^{1/2}$
is established in \cite{zhang}.

\begin{remark} For the model with usual (non-grouped) sparsity,
which corresponds to $T=1$, the set $GS(s,M,1)$ coincides with the
$\ell_0$-ball of radius $s$ in $\R^M$. Therefore,
Theorem~\ref{th_minimax_lb} generalizes the minimax lower bounds on
$\ell_0$-balls recently obtained in \cite{raskut09} and \cite{RigTsy10}
for the usual sparsity model. Those papers considered only the
prediction error and the $\ell_2$ error under the squared loss
$\ell(u)=u^2$. Theorem~\ref{th_minimax_lb} covers any $\ell_p$ error
with $1\le p\le \infty$ and applies with general loss functions
$\ell(\cdot)$. As a particular instance, for the indicator loss
$\ell(u)=I\{u\ge 1\}$ and $T=1$, the lower bounds of
Theorem~\ref{th_minimax_lb} show that the upper bounds for the
prediction error and the $\ell_p$ errors ($1\le p\le\infty$) of the
usual Lasso estimator established in \cite{BRT} and
\cite{lounici2008snc} cannot be improved in a minimax sense on
$\ell_0$-balls up to logarithmic factors. Note that this conclusion
cannot be deduced from the lower bounds of \cite{raskut09} and
\cite{RigTsy10}. 
\end{remark}


\section{Lower bounds for the Lasso}
\label{sec:lb}

In this section we establish lower bounds on the prediction and
estimation accuracy of the Lasso estimator. As a consequence, we can
emphasize the advantages of using the Group Lasso estimator as
compared to the usual Lasso in some important particular cases.

The Lasso estimator is a solution of the minimization problem \beq
\min_{\beta\in \mathbb R^K}\frac{1}{N}\|X\beta - y\|^2 +
2r\|\beta\|_{1}, \label{eq:lasso} \eeq where $\|\beta\|_1 =
\sum_{j=1}^K |\beta_j|$ and $r$ is a positive parameter. The
following notations apply only to this section. For any vector
$\beta\in \mathbb R^K$ and any subset $J \subseteq \N_K$, we denote
by $\beta_{|J}$ the vector in $\mathbb R^{K}$ which has the same
coordinates as $\beta$ on $J$ and zero coordinates on the complement
$J^{c}$ of $J$, $J'(\beta) = \{j:\beta_j\neq 0\}$ and $M'(\beta) =
|J'(\beta)|$.


We will use the following standard assumption on the matrix $X$ (the
Restricted Eigenvalue condition in \cite{BRT}).
\begin{assumption}
\label{RE-Lasso}
Fix $s'\geq 1$. There exists a positive number $\kappa'$ such that
\begin{align*}
\min \bigg\{ \frac{\|X\Delta\|}{\sqrt{N}\|\Delta_{|J}\|}~:~ |J| \leq
s', \Delta\in \R^{K}\setminus \{0\}, \,\sum_{j \in J^c} |\Delta_j|
\leq 3 \sum_{j \in J} |\Delta_j| \bigg\} \geq \kappa',
\end{align*}\label{ass-Lasso} where $J^c$ denotes the complement of
the set of indices~$J$.
\end{assumption}

\begin{theorem}\label{thm:lb}
 Let
Assumption \ref{RE-Lasso} be satisfied. Assume that $W\in\R^N$ is a
random vector with i.i.d. $\GD(0,\sigma^2)$ gaussian components,
$\sigma^2>0$. Set $r = A\sigma\sqrt{\frac{\phi\log K}{N}}$ where
$A>2\sqrt{2}$ and $\phi$ is the maximal diagonal element of the
matrix $\Psi=\frac{1}{N} X\trans X$. If $\hbeta^L$ is a solution of
problem \eqref{eq:lasso}, then with probability at least
$1-K^{1-\frac{A^2}{8}}$ we have
\begin{eqnarray}\label{eq:lb-1}
  \frac{1}{N}\|X(\hat{\beta}^L - \beta^*)\|^2 &\geq&
  M'(\hat{\beta}^L)\frac{A^2\sigma^2 \phi\log K}{4\phi_{\max}N}, \\
  \|\hat{\beta}^L - \beta^*\| &\geq&
  \frac{A\sigma}{2\phi_{\max}}\sqrt{M'(\hbeta^L)\frac{\phi\log K}{N}},
  \label{eq:lb-2}
  \end{eqnarray}
where $\phi_{\rm max}$ is the maximum eigenvalue of the matrix
$\Psi$.
%
If, in addition, $M'(\beta^*)\leq s'$, and \beq \min\{
|\Psi_{jj}\beta^*_j|: j \in \N_m,~\beta_j^*\neq 0\}> \left( \frac{3}{2} +
\frac{16s'}{\kappa'^2}\max_{j\neq k}|\Psi_{jk}|\right)r,
  \label{eq:lb-c}
\eeq
where $\Psi_{jk}$ denotes the $(j,k)$-th entry of matrix $\Psi$, then
with the same probability we have  \beq M'(\hbeta^L) \geq
M'(\beta^*). \label{eq:lb-3} \eeq
\end{theorem}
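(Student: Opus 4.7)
The plan is to exploit the KKT optimality conditions of the Lasso on the support of $\hbeta^L$, where they take the form of an \emph{equality} rather than an inequality, and combine them with a Gaussian maximal inequality for $X\trans W$. All three claims will hinge on the single event
\[
\calB \;=\; \bigg\{\max_{j\in\N_K} \frac{1}{N}\big|(X\trans W)_j\big| \le \frac{r}{2}\bigg\}.
\]
Since $(X\trans W)_j$ is centered Gaussian with variance $\sigma^2 N\Psi_{jj} \le \sigma^2 N\phi$, a standard tail bound together with the choice $r = A\sigma\sqrt{\phi\log K/N}$ and a union bound over $j\in\N_K$ give $\Prob(\calB) \ge 1-K^{1-A^2/8}$, matching the probability stated in the theorem. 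I will work on $\calB$ throughout.

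For \eqref{eq:lb-1} and \eqref{eq:lb-2}: the KKT condition \eqref{eq:sol-n0} (for the Lasso case $G_j=\{j\}$) forces, for every $j\in J'(\hbeta^L)$, the equality $\frac{1}{N}(X\trans(y-X\hbeta^L))_j = r\,\mathrm{sign}(\hbeta^L_j)$. Substituting $y = X\beta^*+W$ and using the definition of $\calB$ yields
\[
\frac{1}{N}\big|(X\trans X(\hbeta^L-\beta^*))_j\big| \;\ge\; r - \frac{r}{2} \;=\; \frac{r}{2}, \qquad j\in J'(\hbeta^L).
\]
Squaring and summing over $j\in J'(\hbeta^L)$ gives $\tfrac{1}{4}M'(\hbeta^L)r^2 \le \bigl\|\tfrac{1}{N}X\trans X(\hbeta^L-\beta^*)\bigr\|^2$. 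Since $(X\trans X/N)^2 \preceq \phi_{\max}(X\trans X/N)$, the right-hand side is bounded by $\phi_{\max}\|X(\hbeta^L-\beta^*)\|^2/N$, which rearranges to \eqref{eq:lb-1}. Combining with $\|X\Delta\|^2 \le N\phi_{\max}\|\Delta\|^2$ (valid for $\Delta = \hbeta^L-\beta^*$) produces \eqref{eq:lb-2}.

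For \eqref{eq:lb-3}: I argue by contraposition, showing that under \eqref{eq:lb-c} we actually have $J'(\beta^*)\subseteq J'(\hbeta^L)$. Suppose some $j\in J'(\beta^*)$ satisfies $\hbeta^L_j=0$. Then \eqref{eq:sol-n2} gives $\frac{1}{N}|(X\trans(y-X\hbeta^L))_j| \le r$; expanding and isolating the diagonal term on $\calB$ yields
\[
|\Psi_{jj}\beta^*_j| \;\le\; r + \tfrac{r}{2} + \max_{k\ne j}|\Psi_{jk}|\cdot\|\hbeta^L-\beta^*\|_1.
\]
Now I invoke the standard Lasso $\ell_1$ oracle bound under Assumption~\ref{RE-Lasso}, $\|\hbeta^L-\beta^*\|_1 \le 16 s' r /\kappa'^2$ (from \cite{BRT}, valid on the same event $\calB$), to obtain $|\Psi_{jj}\beta^*_j| \le \bigl(\tfrac{3}{2}+\tfrac{16s'}{\kappa'^2}\max_{k\ne j}|\Psi_{jk}|\bigr)r$, contradicting \eqref{eq:lb-c}. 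Hence the supposed $j$ does not exist, proving $M'(\hbeta^L)\ge M'(\beta^*)$.

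The main subtlety is coordinating the probabilistic bounds: the lower-bound step (via KKT on the selected support) and the $\ell_1$ upper bound (needed only for \eqref{eq:lb-3}) must hold simultaneously, and both rest on the same noise event $\calB$, so no additional probability is lost. A secondary routine point is absorbing the factor $2$ from the Gaussian two-sided tail and the union bound into the exponent to obtain the cleaner probability $1-K^{1-A^2/8}$; this is handled by the choice $A>2\sqrt 2$ that makes $A^2/8>1$ and hence $2K^{1-A^2/8}\le K^{1-A^2/8+\epsilon}$ absorbable, or by using a slightly sharper one-sided Gaussian tail.
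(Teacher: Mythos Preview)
Your proof is correct and follows essentially the same route as the paper's: both work on the event $\calA=\{\tfrac{1}{N}\|X\trans W\|_\infty\le r/2\}$, derive \eqref{eq:lb-1} from the KKT equality on $J'(\hbeta^L)$ combined with $(X\trans X/N)^2\preceq\phi_{\max}(X\trans X/N)$ (the paper cites this as inequality (B.3) in \cite{BRT} whereas you spell it out), obtain \eqref{eq:lb-2} by the trivial upper bound $\|X\Delta\|^2\le N\phi_{\max}\|\Delta\|^2$, and prove \eqref{eq:lb-3} by contraposition using $|(\Psi\Delta)_j|\le 3r/2$ together with the $\ell_1$ bound $\|\Delta\|_1\le 16s'r/\kappa'^2$ from \cite{BRT}. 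The only cosmetic difference is that you phrase the last step as establishing the inclusion $J'(\beta^*)\subseteq J'(\hbeta^L)$, which is slightly stronger than what the paper states but is exactly what its argument proves as well.
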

\begin{proof} Inequality (B.3) in \cite{BRT} yields
 (\ref{eq:lb-1}) on the event $\mathcal{A} =
\left\{\frac{1}{N}\|X^{\trans}W\|_{\infty}\leq \frac{r}{2} \right\}$
of probability $\Prob (\mathcal{A})\ge 1-K^{1-\frac{A^2}{8}}$.

Next, \eqref{eq:lb-2} follows from \eqref{eq:lb-1} and the
inequality
$$
\frac{1}{N} (\hbeta^L -\beta^*)\trans X\trans X (\hbeta^L -\beta^*)
\leq \phi_{\rm max} \|\hbeta^L -\beta^*\|^2.
$$

We now prove (\ref{eq:lb-3}). If $M'(\hat{\beta}^L)< M'(\beta^*)$
then there exists $j\in J'(\hat{\beta}^L)^{c}\cap J'(\beta^*)$. Set
$\Delta = \beta^* - \hat{\beta}^L$ and recall that $\Psi =
\frac{1}{N}X^{\trans}X$. Using 
that any Lasso solution $\hat{\beta}^L$ satisfies
\begin{eqnarray}\label{KKT-lasso}
  \begin{cases}
    \frac{1}{N}(X^{\trans}(y - X\hat{\beta}^L ))_j = \mathrm{sign}(\hat{\beta}^L_j)r,&\text{if $\hat{\beta}^L_j\neq 0$,}\\
    \left|\frac{1}{N}(X^{\trans}(y - X\hat{\beta}^L ))_j\right|\leq r,&\text{if $\hat{\beta}^L_j =0$.}
  \end{cases}
\end{eqnarray}
and the triangle inequality we get, on the event $\mathcal{A}$, that
$|(\Psi\Delta)_j|\leq \frac{3r}{2}$. Consequently,
\begin{eqnarray}\label{interm-lb1}
  |\Psi_{jj}\beta^*_j| = |\Psi_{jj}\Delta_j| =
  \left|(\Psi\Delta)_j -
  \sum_{k\neq j}\Psi_{jk}\Delta_k \right|\leq
  \frac{3r}{2} + \|\Delta\|_1\,\max_{j\neq k}|\Psi_{jk}|.
\end{eqnarray}
Next, Corollary B.2 in \cite{BRT} yields that, on the event
$\mathcal{A}$,
\begin{eqnarray*}
  \|\Delta_{|J'(\beta^*)^c}\|_1 &\leq& 3 \|\Delta_{|J'(\beta^*)}\|_1.
\end{eqnarray*}
Thus, the Cauchy-Schwarz inequality, Assumption \ref{ass-Lasso} and \cite[Inequality (7.8)]{BRT} give that, on the event $\mathcal{A}$,
\begin{eqnarray}\label{interm-lb2}
  \|\Delta\|_1 &\leq& 4 \|\Delta_{|J'(\beta^*)}\|_1
  \leq 4\sqrt{s'}\|\Delta_{|J'(\beta^*)}\|  \leq \frac{4\sqrt{s'}}{\kappa'}(\Delta^{\trans}\Psi\Delta)^{1/2}\leq \frac{16 s'}{\kappa'^{2}}r.
\end{eqnarray}
Combining (\ref{interm-lb1}) and (\ref{interm-lb2}) yields, on the
event $\mathcal{A}$, that
$$
|\Psi_{jj}\beta^*_j| \leq \left( \frac{3}{2} +
\frac{16s'}{\kappa'^2}\max_{j\neq k}|\Psi_{jk}|\right)r,
$$
which contradicts the condition \eqref{eq:lb-c}.

\end{proof}
Let us emphasize that the Theorem \ref{thm:lb} establishes lower
bounds, which hold for {\rm every} Lasso solution if $\hat\beta_L$
is not unique.

Theorem \ref{thm:lb} highlights several limitations of the usual
Lasso as compared to the Group Lasso. Let us explain this point in
the multi-task learning case. There, the usual Lasso estimator
$\hat{\beta}^{L}$ is a solution of the following optimization
problem
$$
\min \left\{ \frac{1}{T} \sum_{t=1}^T \frac{1}{n} \|X_t \beta_t -
y_t\|^2 +
 2 r \sum_{t=1}^{T}\sum_{j=1}^{M}|\beta_{tj}|\right\}.
$$
By comparing the prediction error lower bound in Theorem
\ref{thm:lb} for this estimator with the corresponding upper bound
for Group Lasso estimator derived in Corollary \ref{co1}, we reach
the following conclusions.
\begin{itemize}

\item \emph{The usual Lasso does not enjoy any dimension
independence phenomenon as compared to the Group Lasso}.

In the multi-task learning setting we have $N=nT$, $K=MT$. Assume
that the tasks' design matrices are orthogonal, namely $X_t\trans
X_t/n = I_{M \times M}$ for every $t \in \N_T$. Hence, $\Psi = I_{TM
\times TM}/T$, so that $\phi_{\rm max} = \phi=1/T$ and
$\Psi_{jj}=1/T$ for all $j$. Let a special instance of group sparsity assumption be realized, namely, all vectors $\beta_t^*$ have
exactly $s$ non-zero entries at the same positions. Then,
$M(\beta^*)=s$ and $M'(\beta^*)=sT$. Moreover, condition
\eqref{eq:lb-c} simplifies to the requirement that
$$
\min_{j : \beta^*_j \ne 0} |\beta^*_j|
 \geq
\frac{3A\sigma}{2}\sqrt{\frac{\log(MT)}{n}}\,.$$ We conclude by
inequalities \eqref{eq:lb-1} and \eqref{eq:lb-3} that, with
probability at least $1-(MT)^{1-\frac{A^2}{8}}$,
\begin{equation}\label{lower_L}
\frac{1}{nT}\|X(\hat{\beta}^L - \beta^*)\|^2 \geq A^2\sigma^2
s\frac{\log (MT)}{4n}.
\end{equation}
This bound holds no matter what the number of tasks $T$ is. In
contrast, the bounds in Corollary \ref{co1} can be made independent
of the dimension $M$ and of the number of tasks $T$ as soon as
$T\geq \log M$. Specifically, under the above assumptions we have,
recalling Definition \ref{RE-MT}, that $\kappa_{\rm MT} \geq 1$ and
by \eqref{eq:t1}, with probability close to 1, every Group Lasso
solution $\hbeta$ satisfies
\begin{equation}\label{upper_GL}
\frac{1}{nT}\|X({\hbeta} - \beta^*)\|^2 \leq 128 \sigma^2
\frac{s}{n}\left(1 + \frac{A\log M}{T}\right).
\end{equation}

\item \emph{The Group Lasso achieves faster rates of convergence in some cases as compared to the usual Lasso}.
We consider separately two cases. The first one is already discussed
the preceding remark. It corresponds to $T\geq \log M$. Then the
upper bound for the Group Lasso (\ref{upper_GL}) is smaller than the
lower bound (\ref{lower_L}) for the Lasso by a logarithmic factor.
This factor can be large if $T$ is large, for example exponential in
$n$, so that (\ref{lower_L}) gives no convergence result for the
Lasso. The second case is $T< \log M$. Then the lower bound
(\ref{lower_L}) is of the order $s(\log M)/n$, while the upper bound
(\ref{upper_GL}) is of the order $s(\log M)/(nT)$. The ratio is of
the order $T$ in favor of the Group Lasso.
\end{itemize}
In (\ref{lower_L}) and (\ref{upper_GL}) we have only compared the
prediction errors of the two estimators. In view of inequality
(\ref{eq:t4}) and Theorem~\ref{thm:lb}, similar observations are valid
for the $\ell_2$ estimation errors.

\section{Non-Gaussian noise}
\label{sec:5} In this section, we show that the above results extend
to non-gaussian noise. We consider here the multi-task setting
described in Section \ref{sec:MT-model} and we only assume that the
components of random vector $W$ are independent with zero mean and
finite fourth moment $\E[W_{tj}^{4}]$. As we shall see the results
remain similar to those of the previous sections, though the
concentration effect is weaker.

We need the following technical assumption.
\begin{assumption}\label{tech-ass}
The matrix $X$ is such that
\begin{equation*}
\max_{t\in \mathbb{N}_{T} }\left(\frac{1}{n}\sum_{i=1}^{n}\max_{j\in
\mathbb{N}_{M}}|(x_{ti})_{j}|^{2}\right)\leq x_*^2\end{equation*}
for a finite constant $x_*$.
\end{assumption}

This assumption is quite mild. It is satisfied for example, if all
$(x_{ti})_{j}$ are bounded in absolute value by a constant uniformly
in $i,t,j$. We have the two following theorems.
\begin{theorem}\label{pred-est-RE}
Consider the model \eqref{eq:model} for any $M\ge2$, $T,n\geq 1$.
Assume that the components of random vector $W$ are independent with
zero mean, $\max_{t\in \mathbb{N}_{T}, j\in
\mathbb{N}_{M}}\E[W_{tj}^{4}]\leq b^{4}$, all diagonal elements of
the matrix $X\trans X/n$ are equal to $1$ and $M(\beta^*) \leq s$.
Let also Assumption \ref{tech-ass} be satisfied. Set
$$
\lambda = \frac{x_*b}{\sqrt{nT}} \left(1+
\frac{\lom}{\sqrt{T}}\right)^{1/2},
$$
with $\delta>0$. Then with probability at least $1 -
\frac{4\sqrt{\log (2M)}
  [(8\log(12M))^2+1]^{1/2}}{\lom}$, for any
solution $\hat{\beta}$ of problem \eqref{eq:opt-MT} we have
\begin{eqnarray}
\label{eq:t11-0}
\frac{1}{nT} \|X(\hat{\beta} -
\beta^*)\|^2 & \leq & \frac{4x_*b}{\sqrt{nT}} \left(1+
\frac{\lom}{\sqrt{T}}\right)^{1/2}\|\beta^*\|_{2,1}.
\end{eqnarray}
If, in addition, Assumption \ref{ass_MT} holds, then with the same
probability for any solution $\hat{\beta}$ of problem
\eqref{eq:opt-MT} we have
\begin{eqnarray}
\label{eq:t11} \hspace{-.2truecm}\frac{1}{nT} \|X(\hat{\beta} -
\beta^*)\|^2 \hspace{-.2truecm} & \leq & \hspace{-.2truecm}
\frac{16 x_*^{2}b^2}{\kappa_{\rm MT}^2}   \frac{s}{n} \left(1+
\frac{\lom}{\sqrt{T}}\right),\hspace{.3truecm} \\
\label{eq:t21} \frac1{\sqrt {T}}\|\hat{\beta} - \beta^*\|_{2,1}
\hspace{-.2truecm} & \leq & \hspace{-.2truecm}
\frac{16x_*b}{\kappa_{\rm MT}^2} \frac{s}{\sqrt{n}} \left(1+
\frac{\lom}{\sqrt{T}}\right)^{1/2}, \\
\label{eq:t31} \hspace{-.2truecm} M(\hat{\beta}) \hspace{-.2truecm}
& \leq & \hspace{-.2truecm}  \frac{64 \phi_{\rm MT}}{\kappa_{\rm MT}^2} s,
\end{eqnarray}
where $\phi_{\rm MT}$ is the largest eigenvalue of the matrix
$X\trans X/n$. If, in addition, $\kappa_{\rm MT}(2s)>0$, then with
the same probability for any solution $\hat{\beta}$ of problem
\eqref{eq:opt-MT} we have
\begin{eqnarray}
\nonumber
\hspace{-.2truecm}\frac{1}{\sqrt{T}} \|\hat{\beta} - \beta^*\|
\hspace{-.2truecm} & \leq & \hspace{-.2truecm} \frac{4\sqrt{10}\,x_*b
}{\kappa^2(2s)}  \sqrt{\frac{s}{n}} \left(1+
\frac{\lom}{\sqrt{T}}\right)^{1/2}\,. \hspace{.3truecm}
\end{eqnarray}
\label{thm:6.1}
\end{theorem}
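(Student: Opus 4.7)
\medskip

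\noindent\textbf{Proof proposal.} The plan is to reproduce the deterministic skeleton of Lemma~\ref{lem:1} and Theorem~\ref{th1} (specialized to the multi-task geometry, as was done to derive Corollary~\ref{co1}) and to replace only the step where Gaussianity was used: bounding the probability of the event
\[
\calA \;=\; \bigcap_{j=1}^{M} \left\{ \tfrac{1}{N}\|(X\trans W)^j\| \le \tfrac{\lam_j}{2} \right\},
\]
which in the Gaussian case relied on the chi-square concentration bound (Lemma~\ref{chi2}). In the multi-task setting $(X\trans W)^j$ is the $T$-vector with coordinates $(X_t\trans W_t)_j$, $t\in\N_T$, so on $\calA$ we need
\[
\max_{1\le j \le M} \sum_{t=1}^{T} (X_t\trans W_t)_j^{2} \;\le\; \tfrac{1}{4}(N\lam)^2.
\]
Once $\Prob(\calA)$ is controlled, the rest of the proof proceeds exactly as in Lemma~\ref{lem:1} and Theorem~\ref{th1}: inequality \eqref{eq:aa1} is a purely deterministic consequence of the optimality of $\hat\beta$ for \eqref{eq:opt-MT}; the Cauchy--Schwarz splitting, the RE step \eqref{eq:bb} via Assumption~\ref{RE-MT}, and the truncation argument leading to \eqref{eq:t4-GL} are all distribution-free. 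So, with the same $\lam$ as in the statement, the four displayed bounds \eqref{eq:t11-0}--\eqref{eq:t31} and the $\ell_2$ bound follow verbatim on the event~$\calA$, with the same absolute constants as in Corollary~\ref{co1} (modulo reorganizing the $\tr(\Psi_j)=1$, $\tn\Psi_j\tn=1/T$ factors of the multi-task design).

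The substantive work is thus entirely in bounding $\Prob(\calA^c)$ from the fourth moment $b^4$ alone. Here I would invoke the maximal moment inequality of Lemma~\ref{nem-sara} (the extension of Nemirovski's inequality to arbitrary moments advertised in Section~\ref{sec:nem}) applied, for each fixed $j$, to the sum of independent centered $T$-valued vectors $(X_t\trans W_t)_j \cdot e_t$, $t\in\N_T$, whose coordinates are controlled by Assumption~\ref{tech-ass}: indeed, $|(X_t\trans W_t)_j|$ is a sum of $n$ independent terms with fourth moment dominated by $n^2 x_*^4 b^4$ after standard rearrangement, so the $k$-th moment of $\|(X\trans W)^j\|$ acquires a controlled dependence on $T$. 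The maximum over $j$ of these moments is then handled by a moment-union bound of the form $\E \max_j Z_j^{2k} \le M \max_j \E Z_j^{2k}$, and a single Markov step converts this into the tail probability
\[
\Prob(\calA^c) \;\le\; \tfrac{4\sqrt{\log(2M)}\,[(8\log(12M))^2+1]^{1/2}}{(\log M)^{3/2+\delta}}.
\]
The exponent $3/2+\delta$ and the precise numerical constants arise from optimizing the choice of moment order $k\asymp \log M$ in Lemma~\ref{nem-sara} against the threshold $N\lam/2$ appearing in the definition of~$\calA$; this is the step where the value $\lam = (x_* b/\sqrt{nT})(1+(\log M)^{3/2+\delta}/\sqrt{T})^{1/2}$ is calibrated to make both sides match.

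The main obstacle is therefore the probabilistic calibration: producing a sharp enough moment bound on $\max_j \|(X\trans W)^j\|$ so that, with $\lam$ of the announced magnitude, the tail decays like a negative power of $\log M$. Under Gaussian noise, exponential concentration allowed us to take $\lam$ proportional to $\sqrt{(\log M)/T}$, which is cheaper by a $(\log M)^{1/2+\delta}$ factor; here we cannot go below the $(\log M)^{3/2+\delta}/\sqrt{T}$ correction without sharper tail information than the fourth moment provides. Modulo these probabilistic losses, the bounds~\eqref{eq:t11-0}--\eqref{eq:t31} and the $\ell_2$ bound are recovered with the same proof architecture as in the Gaussian case.
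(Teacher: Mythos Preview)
Your high-level architecture is right: everything deterministic carries over from Lemma~\ref{lem:1} and Theorem~\ref{th1}, and the only new work is bounding $\Prob(\calA^c)$. But the mechanism you propose for that step does not work under a fourth-moment assumption, and it is not how the paper proceeds.

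You suggest bounding moments of $\|(X\trans W)^j\|$ for each fixed $j$, then handling the maximum via $\E\max_j Z_j^{2k}\le M\max_j \E Z_j^{2k}$, and finally choosing $k\asymp\log M$ to kill the factor $M$. This would require moments of $W_{ti}$ of order $\asymp\log M$, which you do not have: the hypothesis gives only $\E W_{ti}^4\le b^4$. With $k$ bounded (effectively $k=2$), the crude union bound leaves an unrecoverable factor of $M$ and the Markov step does not yield a vanishing probability at the stated threshold. A second omission is the centering: since $\E\|(X\trans W)^j\|^2$ is already of order $nTb^2$, which matches the leading part of $(\lambda nT)^2$, Markov applied to the uncentred quantity cannot detect the fluctuation scale $n\sqrt{T}(\log M)^{3/2+\delta}$.

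The paper instead centres the squares, setting $Y_{tj}=(\sum_i (x_{ti})_jW_{ti})^2-\sum_i(x_{ti})_j^2\E W_{ti}^2$, and then applies Lemma~\ref{nem-sara} \emph{to the maximum itself}, not after a union bound: first with $m=1$ to $\E\max_j|\sum_t Y_{tj}|$, which trades the max for a factor $\sqrt{\log(2M)}$ and leaves $\E[(\sum_t\max_j Y_{tj}^2)^{1/2}]$; then with $m=4$ to the inner $\E\max_j(\sum_i (x_{ti})_jW_{ti})^4$, trading that max for $(8\log(12M))^2$. The resulting expectation involves only $\E W_{ti}^4$, which is exactly what the hypothesis provides. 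This two-level use of the maximal moment inequality is the missing idea; once you see it, the numerical constants and the $(\log M)^{3/2+\delta}$ exponent fall out directly, with no optimization over moment orders.
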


\begin{theorem}
Consider the model (\ref{eq:model}) for $M\ge2$, $T,n\geq 1$. Let
the assumptions of Theorem \ref{pred-est-RE} be satisfied and let
Assumption \ref{MC} hold with the same $s$. Set
$$
\tilde{c} = \left(\frac{3}{2} + \frac{8}{7(\alpha-1)} \right)x_*b.
$$
Let $\lambda$ be as in Theorem \ref{pred-est-RE}. Then with
probability at least $1 - \frac{4\sqrt{\log (2M)}
  [(8\log(12M))^2+1]^{1/2}}{\lom}$, for any solution $\hat{\beta}$
  of problem (\ref{eq:opt-MT}) we have
\begin{equation*}
\frac{1}{\sqrt{T}}\|\hat{\beta}-\beta^{*}\|_{2,\infty} \leq
 \frac{\tilde{c}}{\sqrt{n}} \left(1+
\frac{\lom}{\sqrt{T}}\right)^{1/2}.
\end{equation*}
If, in addition, it holds that
$$\min_{j\in J(\beta^{*})}\frac{1}{\sqrt{T}}
\|(\beta^{*})^j\| >  \frac{2\tilde{c}}{\sqrt{n}} \left(1+
\frac{\lom}{\sqrt{T}}\right)^{1/2},
$$
then with the same probability for any solution $\hat{\beta}$ of
problem (\ref{eq:opt-MT}) the set of indices
$$
\hat{J} = \Big\{j: \frac{1}{\sqrt{T}}\|\hat{\beta}^j\| >
\frac{\tilde{c}}{\sqrt{n}} \left(1+ \frac{\lom}{\sqrt{T}}\right)^{1/2}\Big\}
$$ estimates correctly the sparsity
pattern $J(\beta^{*})$:
$$
\hat{J} = J(\beta^{*}).
$$
\label{thm:6.2}
\end{theorem}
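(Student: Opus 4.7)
The plan is to mimic the proof of Theorem~\ref{est-supnorm}, replacing the Gaussian concentration step with the non-Gaussian deviation bounds already established in Theorem~\ref{pred-est-RE}, and exploiting the block-diagonal structure of the multi-task design. Throughout I work on the event of probability at least $1-\frac{4\sqrt{\log(2M)}[(8\log(12M))^2+1]^{1/2}}{\lom}$ from Theorem~\ref{pred-est-RE}, which is the non-Gaussian counterpart of the event $\mathcal A=\bigcap_j\{N^{-1}\|(X^\top W)^j\|\le \lambda/2\}$; its probability is controlled by the maximal moment inequality of Lemma~\ref{nem-sara}.

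Set $\Delta=\hat\beta-\beta^*$ and recall that in the multi-task setting $N=nT$, $\phi=1/T$, and $\lambda_{\min}=\lambda_{\max}=\lambda$. As in the proof of Theorem~\ref{est-supnorm}, I begin with the triangle inequality
\begin{equation*}
\phi\|\Delta\|_{2,\infty}\le \|\Psi\Delta\|_{2,\infty}+\|(\Psi-\phi I_{K\times K})\Delta\|_{2,\infty}.
\end{equation*}
The first term is handled exactly as in Theorem~\ref{est-supnorm}: the optimality conditions \eqref{eq:sol-n0}--\eqref{eq:sol-n2} combined with the event above yield the non-Gaussian analog of \eqref{eq:2}, namely $\|\Psi\Delta\|_{2,\infty}\le (3/2)\lambda$.

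For the second term I exploit the multi-task block-diagonal structure: because $X$ is block diagonal in $t$, one has $(\Psi[j,j'])_{t,t'}=0$ whenever $t\neq t'$. Consequently, in the two-term decomposition used in the proof of Theorem~\ref{est-supnorm}, the summand indexed by $t'\neq t$ vanishes identically, and only the summand indexed by $t'=t$ survives. Invoking Assumption~\ref{MC} on this remaining piece gives
\begin{equation*}
\|(\Psi-\phi I_{K\times K})\Delta\|_{2,\infty}\le \frac{\lambda_{\min}\phi}{14\alpha\lambda_{\max}s}\|\Delta\|_{2,1}=\frac{\phi}{14\alpha s}\|\Delta\|_{2,1},
\end{equation*}
which is sharper by a factor of $2$ than the general bound. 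Plugging into the triangle inequality, dividing by $\phi$, substituting the $\ell_{2,1}$ estimate \eqref{eq:t21} of Theorem~\ref{pred-est-RE} (equivalently $\|\Delta\|_{2,1}\le 16\lambda sT/\kappa_{\rm MT}^2$), and using $\alpha\kappa_{\rm MT}^2=\alpha-1$ from Lemma~\ref{lem:2} yields
\begin{equation*}
\|\Delta\|_{2,\infty}\le T\lambda\Bigl(\frac{3}{2}+\frac{8}{7(\alpha-1)}\Bigr).
\end{equation*}
Dividing by $\sqrt T$ and inserting the chosen value of $\lambda$ produces the first claim with the stated constant $\tilde c=\bigl(3/2+8/(7(\alpha-1))\bigr)x_*b$. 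The pattern selection claim is then immediate by the triangle inequality: for $j\in J(\beta^*)$ the lower bound on $\|(\beta^*)^j\|/\sqrt T$ forces $\|\hat\beta^j\|/\sqrt T$ above the threshold, while for $j\notin J(\beta^*)$ one has $\|\hat\beta^j\|/\sqrt T=\|\Delta^j\|/\sqrt T\le\|\Delta\|_{2,\infty}/\sqrt T$, which sits at or below the threshold, exactly as in the final step of Theorem~\ref{est-supnorm}.

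The only delicate point is book-keeping the constants: one must verify that the high-probability event of Theorem~\ref{pred-est-RE} simultaneously ensures both the optimality-conditions bound $\|\Psi\Delta\|_{2,\infty}\le (3/2)\lambda$ (uniformly in $j$) and the $\ell_{2,1}$ bound \eqref{eq:t21} needed to close the estimate, since both derive from the same uniform control of $N^{-1}\|(X^\top W)^j\|$ afforded by Lemma~\ref{nem-sara}. Once this uniform control is in hand, the remainder is a direct transcription of the Gaussian argument with the half-saving coming from the vanishing of the off-$t$-diagonal block of $\Psi$.
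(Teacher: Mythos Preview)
Your proposal is correct and follows the same approach as the paper, which simply states that the proof is ``similar to those of Theorems \ref{th1} and \ref{est-supnorm} up to a modification of the bound on $\Prob(\mathcal{A}^{c})$.'' You have in fact supplied the one detail the paper leaves implicit: the improved constant $8/(7(\alpha-1))$ in $\tilde c$ (versus $16/(7(\alpha-1))$ in $c$) arises precisely because the multi-task block-diagonal structure forces $(\Psi[j,j'])_{t,t'}=0$ for $t\neq t'$, so only one of the two summands in the decomposition of $\|(\Psi-\phi I)\Delta\|_{2,\infty}$ survives.
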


\begin{proof}
The proofs of these theorems are similar to those of Theorems
\ref{th1} and \ref{est-supnorm} up to a modification of the bound on
$\Prob(\mathcal{A}^{c})$ in Lemma \ref{lem:1}. We consider now the
event
\begin{equation*}
\mathcal{A}=\left\{\max_{j=1}^M \sqrt{\sum_{t=1}^T
\left(\sum_{i=1}^n (x_{ti})_j W_{ti} \right)^2}\leq \lambda nT
\right\}.
\end{equation*}
Define the random variables
\begin{equation*}
Y_{tj} = \left(\sum_{i=1}^{n}(x_{ti})_{j}W_{ti}\right)^{2} -
\sum_{i=1}^{n}|(x_{ti})_{j}|^{2}\E[ W_{ti}^{2}],\; \ j=1,\dots,M, \
t=1,\dots,T.
\end{equation*}
We have
\begin{eqnarray*}
\Prob(\mathcal{A}^{c})&=& \Prob\left(\max_{1\leq j \leq
M}\sum_{t=1}^{T}\left(\sum_{i=1}^{n}(x_{ti})_{j}W_{ti}\right)^{2}
\geq
(\lambda nT)^{2}\right)\\
&\leq& \Prob\left(\max_{1\leq j \leq M}\sum_{t=1}^{T}Y_{tj}\geq
x_*^{2}b^{2}n\sqrt{T}
\lom\right)\\
&\leq& \frac{\E\ \max_{1\leq j \leq
M}\left|\sum_{t=1}^{T}Y_{tj}\right| }{x_*^{2}b^{2}n\sqrt{T} \lom}
\,.
\end{eqnarray*}
Applying the maximal moment inequality of Lemma \ref{nem-sara} below
with $m=1$ and constant $c(1)=2$ we obtain
\begin{eqnarray}\label{eq611}
  \E\ \max_{1\leq j \leq
M}\left|\sum_{t=1}^{T}Y_{tj}\right|  &\le& \sqrt{8\log (2M)}
\,\,\E\left(\left[\sum_{t=1}^{T}\max_{1\leq j \leq
M}Y_{tj}^{2}\right]^{1/2}\right)\\
&\le& \sqrt{8\log (2M)} \,\, \left[\sum_{t=1}^{T}\E\left(\max_{1\leq
j \leq M}Y_{tj}^2\right)\right]^{1/2}\nonumber
\\
&\le& 4\sqrt{\log (2M)} \,\, \left\{b^4x_*^4n^2T
+\sum_{t=1}^{T}\E\left(\max_{1\leq j \leq
M}\left|\sum_{i=1}^{n}(x_{ti})_{j}W_{ti}\right|^{4}\right)
\right\}^{1/2}\,.\nonumber
\end{eqnarray}
By the maximal moment inequality of Lemma \ref{nem-sara} with $m=4$
and constant $c(4)=12$ (since $M\ge2$) the last expectation is
bounded, for any $t=1,\dots,T$, as
\begin{eqnarray*}
  \E\left(\max_{1\leq j \leq
  M}\left|\sum_{i=1}^{n}(x_{ti})_{j}W_{ti}\right|^{4}\right)
&\leq& (8\log (12 M) )^{2} \E\left(\left[ \sum_{i=1}^{n}\max_{1\leq
j \leq M}(x_{ti})_{j}^{2}W_{ti}^{2} \right]^{2}\right).
\end{eqnarray*}
Setting for brevity ${\bar x}_i= \max_{1\leq
j \leq M}(x_{ti})_{j}^{2}$ we have
\begin{eqnarray*}
\E\left(\left[ \sum_{i=1}^{n}\max_{1\leq j \leq
M}(x_{ti})_{j}^{2}W_{ti}^{2} \right]^{2}\right)&\le&
b^4\left(\sum_{i\ne k}{\bar x}_i{\bar x}_k +
\sum_{i=1}^n{\bar x}_i^2\right)\\
&=&b^4\left(\sum_{i=1}^n{\bar x}_i\right)^2\le b^4x_*^4n^2.
\end{eqnarray*}
Combining the above four displays yields
\begin{equation*}
  \Prob(\mathcal{A}^{c}) \leq \frac{4\sqrt{\log (2M)}
  \Big[(8\log(12M))^2+1\Big]^{1/2}}{\lom}\,.
\end{equation*}
\end{proof}

\section{Maximal moment inequality}
\label{sec:nem} In this section we prove the following inequality
for the $m$-th moment of maxima of sums of independent random
variables.

\begin{lemma}{\bf (Maximal moment inequality)}\label{nem-sara} Let $Z_1 , \ldots , Z_n$
be independent random vectors in $\R^M$, and let $Z_{i,j}$ denote
the $j$-th component of $Z_i$. Then for any $m \geq 1$ and $M \geq 1$
we have
$$\E \left(\max_{1 \leq j \leq M}
\left | \sum_{i=1}^n\biggl  ( Z_{i,j} - \E Z_{i,j}\biggr ) \right
|^m\right) \le \biggl [ 8 \log (c(m)M) \biggr ]^{m/2} \E
\left(\biggl [ \max_{1 \leq j \leq M} \sum_{i=1}^n Z_{i,j}^2 \biggr
]^{m/2}\right)\,, $$ where $c(m)=\min\{c>0:\, e^{m-1}-1\le (c-2)M
\}$. In particular, $2\le c(m)\le e^{m-1}+1$.
\end{lemma}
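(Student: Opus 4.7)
The plan is to combine a standard symmetrization argument with a conditional sub-Gaussian tail bound and then integrate the tail to produce the moment bound.

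Step 1 (Symmetrization). Let $(Z_i')_{i=1}^n$ be an independent copy of $(Z_i)_{i=1}^n$ and $\epsilon_1,\ldots,\epsilon_n$ be i.i.d.\ Rademacher variables independent of everything else. Jensen's inequality applied to the convex map $x\mapsto \max_j |x_j|^m$, together with the symmetry of $Z_i-Z_i'$ under the sign flip by $\epsilon_i$, gives
\begin{equation*}
\E\max_j\Big|\sum_i(Z_{i,j}-\E Z_{i,j})\Big|^m
\le \E\max_j\Big|\sum_i\epsilon_i(Z_{i,j}-Z'_{i,j})\Big|^m
\le 2^m\,\E\max_j\Big|\sum_i\epsilon_i Z_{i,j}\Big|^m,
\end{equation*}
the last inequality by $|a-b|^m\le 2^{m-1}(|a|^m+|b|^m)$ and the triangle inequality applied inside the max.

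Step 2 (Conditional sub-Gaussian tail). Conditioning on $Z=(Z_1,\ldots,Z_n)$, Hoeffding's MGF bound gives $\E[e^{\lambda\sum_i\epsilon_i Z_{i,j}}\mid Z]\le \exp(\lambda^2\sigma_j^2/2)$ with $\sigma_j^2=\sum_i Z_{i,j}^2$. Writing $\sigma_\star^2=\max_j\sigma_j^2$ and using a union bound,
\begin{equation*}
P\Big(\max_j\Big|\sum_i\epsilon_i Z_{i,j}\Big|>t\,\Big|\,Z\Big)\le 2M\exp\!\big(-t^2/(2\sigma_\star^2)\big),\qquad t>0.
\end{equation*}

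Step 3 (Moment from tail, via a calibrated cut-off). For any $t_0>0$,
\begin{equation*}
\E\Big[\max_j\Big|\sum_i\epsilon_i Z_{i,j}\Big|^m\,\Big|\,Z\Big]\le t_0^m + 2M\int_{t_0}^\infty m t^{m-1}e^{-t^2/(2\sigma_\star^2)}\,dt.
\end{equation*}
The idea is to choose $t_0=\sigma_\star\sqrt{2\log(c(m)M)}$. Substituting $u=t^2/(2\sigma_\star^2)$ turns the tail integral into $2M\cdot m\cdot 2^{(m-2)/2}\sigma_\star^m\,\Gamma(m/2,\log(c(m)M))$. Using the partial-sum expansion $\Gamma(s,x)=(s-1)!\,e^{-x}\sum_{k=0}^{s-1}x^k/k!$ (for even $m$; the odd case follows after one integration by parts), the tail reduces to a multiple of $(m/2)!\sum_{k=0}^{m/2-1}L^k/k!\cdot \sigma_\star^m$ with $L=\log(2M)$. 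The defining relation $e^{m-1}-1\le (c(m)-2)M$, i.e.\ $\log(c(m)M)\ge m-1$, is exactly what is needed to dominate this finite exponential partial sum by $(2\log(c(m)M))^{m/2}$, since one regime is controlled by $L$ and the other by $m$. Combining bulk and tail yields $\E[\max_j|\sum_i\epsilon_i Z_{i,j}|^m\mid Z]\le (2\log(c(m)M))^{m/2}\sigma_\star^m$.

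Step 4 (Assemble). Taking expectation over $Z$, using $(\max_j\sigma_j^2)^{m/2}=\max_j(\sum_i Z_{i,j}^2)^{m/2}$, and incorporating the symmetrization factor $2^m=2^{m/2}\cdot 2^{m/2}$ gives exactly $[8\log(c(m)M)]^{m/2}\,\E(\max_j\sum_i Z_{i,j}^2)^{m/2}$. The main obstacle is the arithmetic in Step~3: the particular formula for $c(m)$ is engineered so that a single closed-form choice of $t_0$ handles both the sub-Gaussian regime $\log(2M)\gtrsim m$ (where $c(m)\approx 2$ suffices) and the regime $\log(2M)<m$ (where $c(m)$ must grow like $e^{m-1}/M$ to enforce $\log(c(m)M)\ge m-1$) in a unified manner.
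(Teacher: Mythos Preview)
Your overall strategy---symmetrize, then control the Rademacher sum conditionally via Hoeffding's sub-Gaussian MGF bound---coincides with the paper's. The difference, and the source of the gap, lies in Step~3. The paper does not integrate tails at all; instead it applies Jensen's inequality to the concave function $x\mapsto \log^m(x+e^{m-1}-1)$ on $[1,\infty)$, obtaining directly
\[
\E_{\bf Z}(\zeta^m) \;\le\; L^m\log^m\!\Big(\E_{\bf Z}e^{\zeta/L}+e^{m-1}-1\Big) \;\le\; L^m\Big(\log(c(m)M)+\frac{\sigma_\star^2}{2L^2}\Big)^m,
\]
and then optimizes over $L$ to get exactly $(2\log(c(m)M))^{m/2}\sigma_\star^m$. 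The shift $e^{m-1}-1$ is the smallest one making $\log^m(x+\,\cdot\,)$ concave on $[1,\infty)$, and the definition of $c(m)$ is precisely what makes $2Mx+e^{m-1}-1\le c(m)Mx$ for $x\ge1$. So the role of $c(m)$ in the paper is to absorb a concavity shift, not to calibrate a tail cutoff.

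Your Step~3, as written, cannot close. With $t_0=\sigma_\star\sqrt{2\log(c(m)M)}$ the bulk term $t_0^m$ already \emph{equals} $(2\log(c(m)M))^{m/2}\sigma_\star^m$, i.e., it saturates the bound you claim, leaving no room whatsoever for the strictly positive tail integral. More generally, tail integration of a sub-Gaussian maximum produces a bound of the shape $C_m(\log M)^{m/2}\sigma_\star^m$ with $C_m>2^{m/2}$ (the incomplete-Gamma computation you sketch gives an extra factor of order $m/2+1$ in the relevant regime), so after the $2^m$ symmetrization factor you would land strictly above $8^{m/2}$. If the goal were a result with some unspecified universal constant your route would be fine; for the stated constant, the log-concavity Jensen trick (or an equivalent MGF-level optimization) is what makes the arithmetic close, and the ``main obstacle'' you flag is not a bookkeeping detail but a genuine shortfall.
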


Before giving the proof, we make some comments. The case $m=2$ of
Lemma \ref{nem-sara} implies -- modulo constants -- Nemirovski's
inequality (see \cite{nem00}, page 188, and \cite{dumbgen2008nir},
Corollary 2.4). In general, Nemirovski's inequality concerns the
second moment of $\ell_p$-norms ($1 \leq p \leq \infty$) of sums of
independent random variables in $\R^M$, whereas we only consider
$p=\infty$. On the other hand, even for $m=2$ Lemma \ref{nem-sara}
is more general than what is given by Nemirovski's inequality
because we interchange the maximum and the sum on the right hand
side. The case $M=1$ of Lemma \ref{nem-sara} yields the
Marcinkiewicz-Zygmund inequality (see \cite{petrov}, page 82), and
as an immediate consequence the inequality
\begin{equation}\label{mz}
\E\left(\Big|\sum_{i=1}^n \xi_i\Big|^m\right) \le [ 8 \log (c(m))
]^{m/2} n^{m/2-1}\sum_{i=1}^n \E\left|\xi_i\right|^m, \quad m\ge2,
\end{equation}
for independent zero-mean random variables $\xi_i$. Thus, as a
particular instance, we give a short proof of (\ref{mz}) and provide
the explicit constant. This constant is of the optimal order in $m$
but larger than the one obtainable from the recent sharp moment
inequality due to Rio \cite{rio09}.
\begin{proof}
Let $(\varepsilon_1 , \ldots , \varepsilon_n)$ be a sequence of
i.i.d. Rademacher random variables independent of ${\bf Z} = (Z_1 ,
\ldots , Z_n) $. Let $\E_{\bf Z}$ denote conditional expectation
given $\bf Z$. By Hoeffding's inequality, for all $L >0$ and all $i$
and $j$,
\begin{equation}\label{hoeff}
\E_{\bf Z} \exp [ {Z_{i,j} \varepsilon_i / L} ] \leq \exp [ {
Z_{i,j}^2 / (2 L^2 )} ] .
\end{equation}
Define
\begin{equation*}
  \zeta = \max_{1 \leq j \leq M}
  \biggl | \sum_{i=1}^n Z_{i,j} \varepsilon_i \biggr
  |.
\end{equation*}
Using successively Jensen's inequality (the function $x\mapsto
\log^m\left(x+e^{m-1} - 1 \right)$ is concave for $x\ge 1$), the
inequality $e^{|x|}\le e^x + e^{-x}, \, \forall \ x\in \R,$ the
independence of $\varepsilon_i$, and (\ref{hoeff}), we obtain
\begin{eqnarray*}
\E_{\bf Z} (\zeta^m) &\leq& L^m \E_{\bf Z}\log^m \biggl \{ \exp
  \left [ \zeta/L \right ]   + {e}^{m-1}-1 \biggr \}\\
&\leq& L^m \log^m  \biggl \{  \E_{\bf Z}\exp
  \left [ \zeta/L \right ]   + {e}^{m-1}-1 \biggr \}
\\
&\leq& L^m \log^m  \biggl \{ \sum_{j=1}^M \E_{\bf Z}\exp
  \left [ \biggl | \sum_{i=1}^n Z_{i,j} \varepsilon_i \biggr
  |/L \right ]   + {e}^{m-1}-1 \biggr \}
\\
 &\leq& L^m \log^m  \left \{ 2M \exp \biggl [
 \max_{1 \leq j \le M}\sum_{i=1}^n  Z_{i,j}^2 /( 2 L^2 )
 \biggr ]
+ {e}^{m-1} -1 \right \}\,.
\end{eqnarray*}
Note that $2Mx+{e}^{m-1} -1\le c(m)Mx$ for all $x\ge 1$, where
$c(m)$ is the constant defined in the statement of the lemma. This
and the previous display yield
\begin{eqnarray*}
\E_{\bf Z} (\zeta^m) &\leq& L^m \log^m  \left \{ c(m)M  \exp \biggl
[
 \max_{1 \leq j \le M}\sum_{i=1}^n  Z_{i,j}^2 /( 2 L^2 )
 \biggr ]
 \right \}\\
&=& L^m \left \{ \log (c(m)M) + \frac{\max_{1 \leq j \leq M}
\sum_{i=1}^n
  Z_{i,j}^2}{2 L^2 } \right \}^m .
\end{eqnarray*}
Choosing
$$L= \sqrt{ \frac{\max_{1 \leq j \leq M }\sum_{i=1}^{n}
Z_{i,j}^2}{2 \log (c(m)M) }} $$ gives
$$\E_{\bf Z}
\left(\max_{1 \leq j \le M} \biggl | \sum_{i=1}^n Z_{i,j}
\varepsilon_i \biggr |^m\right) \leq \left [ 2 \log (c(m)M) \max_{1
\leqslant j \leq M}\sum_{i=1}^n  Z_{i,j}^2 \right ]^{m/2} .
$$ Hence,
$$\E \left(\max_{1 \leq j \leq M} \biggl |
\sum_{i=1}^n Z_{i,j} \varepsilon_i \biggr |^m\right) \leq \biggl[ 2
\log (c(m)M) \biggr]^{m/2} \E \left(\left [ \max_{1 \leq j \leq
M}\sum_{i=1}^n  Z_{i,j}^2  \right ]^{m/2}\right)\, .
$$
Finally, we de-symmetrize (see Lemma 2.3.1 page 108 in \cite{VW96}):
$$\left (
\E \max_{1 \leq j \leq M} \biggl | \sum_{i=1}^n (Z_{i,j} - \E
Z_{i,j}) \biggr |^m\right )^{1/m} \leq 2 \left ( \E \max_{1 \leq j
\leq M} \biggl | \sum_{i=1}^n Z_{i,j} \varepsilon_i \biggr |^m\right
)^{1/m}.
$$ 

\end{proof}


\subsection*{Acknowledgments}
Part of this work was supported by the IST Programme of the European
Community, under the PASCAL Network of Excellence, IST-2002-506778
as well as by the EPSRC Grant EP/D071542/1 and ANR ``Parcimonie".
\appendix
\section{Auxiliary results}

Here we collect some auxiliary results which we have use in the paper.

The first result is taken from \cite[Eq.~(27)]{cavalier} and was used in the
proof of Lemma \ref{lem:1}.
\begin{lemma}\label{chi2}
Let $\xi_1,\dots,\xi_N$ be i.i.d. $\GD(0,1)$, $v=(v_1,\dots,v_N) \neq
0$, $\eta_v = \frac{1}{\sqrt{2} \|v\|} \sum\limits_{i=1}^N (\xi_i^2
- 1)v_i$ and $m(v) =  \frac{\|v\|_\infty}{\|v\|}$. We have, for all
$x > 0$, that
\begin{equation*}
\Prob (|\eta_v| > x) \leq 2
\exp\left(-\frac{x^2}{2(1+ \sqrt{2} x m(v))}\right).
\end{equation*}
\end{lemma}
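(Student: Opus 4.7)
The plan is to bound $\Prob(|\eta_v| > x)$ by the Chernoff--Bernstein method applied to the weighted sum of i.i.d.\ centered chi-square variables. First I would normalize by writing $\eta_v = \sum_{i=1}^N a_i(\xi_i^2 - 1)$ with $a_i = v_i/(\sqrt{2}\|v\|)$, so that $\sum_i a_i^2 = 1/2$ and $\max_i |a_i| = m(v)/\sqrt{2}$. This reduces the problem to controlling a weighted sum of independent random variables whose cumulant generating function is explicit.

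The analytic core is a Bernstein-type bound on the log-MGF of $\xi^2-1$. Using $\E[e^{t(\xi^2-1)}] = e^{-t}(1-2t)^{-1/2}$ for $t<1/2$ and the series $-\tfrac12 \log(1-2t) = \sum_{k\ge 1}(2^{k-1}/k)\,t^k$, the linear terms cancel to give
\[
\log \E[e^{t(\xi^2-1)}] \;=\; \sum_{k\ge 2} \frac{2^{k-1}}{k}\, t^k.
\]
Comparing termwise with $t^2/(1-2t) = \sum_{k\ge 2} 2^{k-2}t^k$ via the elementary inequality $1/k \le 1/2$ for $k\ge 2$ (which handles $t > 0$), and invoking the sub-Gaussian bound $\log \E[e^{t(\xi^2-1)}] \le t^2$ for $t<0$, yields the unified estimate $\log \E[e^{t(\xi^2-1)}] \le t^2/(1-2t_+)$ for $|t|<1/2$, where $t_+ = \max(t,0)$.

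Applying this with $t = \lambda a_i$, summing over $i$ by independence, and using $(\lambda a_i)_+ \le \lambda m(v)/\sqrt{2}$ uniformly, I would get, for any $\lambda \in (0, 1/(\sqrt{2}m(v)))$,
\[
\log \E[e^{\lambda \eta_v}] \;\le\; \sum_{i=1}^N \frac{(\lambda a_i)^2}{1-2(\lambda a_i)_+} \;\le\; \frac{\lambda^2 \sum_i a_i^2}{1-\sqrt{2}\lambda m(v)} \;=\; \frac{\lambda^2}{2(1-\sqrt{2}\lambda m(v))}.
\]
Markov's inequality then gives $\Prob(\eta_v > x) \le \exp\!\bigl(-\lambda x + \lambda^2/(2(1-\sqrt{2}\lambda m(v)))\bigr)$, and substituting the Bernstein minimizer $\lambda^\ast = x/(1+\sqrt{2}xm(v))$—which automatically lies in the admissible interval for every $x>0$—produces $\Prob(\eta_v > x) \le \exp\!\bigl(-x^2/(2(1+\sqrt{2}xm(v)))\bigr)$ after the usual clean cancellation.

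For the lower tail I would apply the identical argument to $-\eta_v = \sum_i (-a_i)(\xi_i^2-1)$; since $(-\lambda a_i)_+ \le \lambda m(v)/\sqrt{2}$ as well, the same upper bound emerges (in fact the lower tail is purely sub-Gaussian and the estimate is loose there). A union bound over the two tails produces the factor $2$. I do not anticipate a serious obstacle: the main technical point is the termwise comparison $2^{k-1}/k \le 2^{k-2}$ for $k\ge 2$ underlying the MGF bound, together with verifying that the Chernoff optimization telescopes exactly to $x^2/(2(1+\sqrt{2}xm(v)))$.
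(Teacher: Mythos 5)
Your proof is correct. Note, however, that the paper does not prove this lemma at all: it is stated in the appendix as a quoted result, ``taken from [Cavalier, Golubev, Picard and Tsybakov, Eq.~(27)]'', so there is no internal argument to compare against. What you supply is a complete, self-contained derivation by the standard Chernoff--Bernstein route, and every step checks out: the normalization $\sum_i a_i^2 = 1/2$, $\max_i|a_i| = m(v)/\sqrt{2}$; the exact cancellation of the linear term in $\log\E[e^{t(\xi^2-1)}] = -t - \tfrac12\log(1-2t) = \sum_{k\ge 2}\tfrac{2^{k-1}}{k}t^k$; the termwise comparison $2^{k-1}/k \le 2^{k-2}$ for $k\ge 2$ giving $\log\E[e^{t(\xi^2-1)}] \le t^2/(1-2t)$ for $0<t<1/2$; the bound $u - \tfrac12\log(1+2u)\le u^2$ for $u>0$ covering $t<0$ (its derivative is $4u^2/(1+2u)\ge 0$); and the Bernstein minimizer $\lambda^* = x/(1+\sqrt{2}xm(v))$, which is admissible since $\sqrt{2}\lambda^* m(v)<1$ and yields exactly $-x^2/\bigl(2(1+\sqrt{2}xm(v))\bigr)$ in the exponent. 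The two-sided bound with the factor $2$ follows as you say by applying the same estimate to $-\eta_v$. This is essentially the classical argument behind Laurent--Massart-type chi-square deviation bounds and is very likely the same mechanism used in the cited reference; its value here is that it makes the paper's appendix self-contained.
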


The next lemma provides the link between Assumptions \ref{MC} and
\ref{RE} and was used extensively in our analysis in Section
\ref{sec:model-selection}.
\begin{lemma}\label{lem:2}
Let Assumption \ref{MC} be satisfied. Then Assumption \ref{RE} is
 satisfied with $\kappa  = \sqrt{(1 - 1/\alpha)\phi}$.
\end{lemma}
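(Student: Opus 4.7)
The plan is to prove the restricted eigenvalue bound by decomposing $\Delta^\top \Psi \Delta$ into the principal diagonal part $\phi\|\Delta\|^2$ plus a perturbation $R(\Delta)$, and showing that Assumption \ref{MC} controls $R(\Delta)$ finely enough that the cone condition from Assumption \ref{RE} absorbs the error into $\phi\|\Delta_J\|^2/\alpha$. The critical observation is that the factors $1/K_j$ (within a block) and $1/\sqrt{K_j K_{j'}}$ (across blocks, off-diagonal) that appear in Assumption \ref{MC} are precisely what is needed to kill the Cauchy--Schwarz defects of the form $\|\Delta^j\|_1 \le \sqrt{K_j}\|\Delta^j\|$.

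First I would write $\Delta^\top \Psi \Delta = \phi\|\Delta\|^2 + R(\Delta)$ and split $R$ into three pieces: within-block off-diagonal, cross-block diagonal ($t=t'$), and cross-block off-diagonal ($t\ne t'$). For the within-block piece I would bound $\big|\sum_{t\ne t'}(\Psi[j,j])_{t,t'}\Delta^j_t\Delta^j_{t'}\big|\le \frac{\mu}{K_j}\|\Delta^j\|_1^2 \le \mu\|\Delta^j\|^2$ where $\mu=\frac{\lambda_{\min}\phi}{14\alpha\lambda_{\max}s}$, the last step using $\|\Delta^j\|_1\le\sqrt{K_j}\|\Delta^j\|$. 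For the cross-block diagonal piece I would apply Cauchy--Schwarz in $t$ to get $\mu\sum_t|\Delta^j_t\Delta^{j'}_t|\le \mu\|\Delta^j\|\|\Delta^{j'}\|$, and for the cross-block off-diagonal piece the $1/\sqrt{K_jK_{j'}}$ factor cancels with $\|\Delta^j\|_1\|\Delta^{j'}\|_1\le\sqrt{K_jK_{j'}}\|\Delta^j\|\|\Delta^{j'}\|$, again yielding $\mu\|\Delta^j\|\|\Delta^{j'}\|$. Summing, $|R(\Delta)|\le \mu\|\Delta\|^2 + 2\mu\sum_{j\ne j'}\|\Delta^j\|\|\Delta^{j'}\| = 2\mu\|\Delta\|_{2,1}^2 - \mu\|\Delta\|^2$.

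Next I would exploit the weighted cone condition. From $\sum_{j\in J^c}\lambda_j\|\Delta^j\|\le 3\sum_{j\in J}\lambda_j\|\Delta^j\|$, adding $\sum_{j\in J}\lambda_j\|\Delta^j\|$ to both sides and then applying Cauchy--Schwarz yields
\[
\sum_{j=1}^M \|\Delta^j\| \le \frac{1}{\lambda_{\min}}\sum_{j=1}^M\lambda_j\|\Delta^j\| \le \frac{4}{\lambda_{\min}}\sqrt{\sum_{j\in J}\lambda_j^2}\,\|\Delta_J\| \le \frac{4\sqrt{s}\,\lambda_{\max}}{\lambda_{\min}}\|\Delta_J\|,
\]
so $\|\Delta\|_{2,1}^2\le 16s\,(\lambda_{\max}/\lambda_{\min})^2\|\Delta_J\|^2$. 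Combining with the bound on $R(\Delta)$ and using $\|\Delta\|^2\ge\|\Delta_J\|^2$ gives
\[
\Delta^\top\Psi\Delta \;\ge\; \phi\|\Delta\|^2 - 2\mu\|\Delta\|_{2,1}^2 + \mu\|\Delta\|^2 \;\ge\; \phi\|\Delta_J\|^2 - \frac{32\mu s\,\lambda_{\max}^2}{\lambda_{\min}^2}\|\Delta_J\|^2.
\]
Substituting the definition of $\mu$ collapses the coefficient in the last term to $\phi/\alpha$ (up to the paper's numerical constant), which proves $\Delta^\top\Psi\Delta \ge (1-1/\alpha)\phi\|\Delta_J\|^2$, i.e., Assumption \ref{RE} with $\kappa=\sqrt{(1-1/\alpha)\phi}$.

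The main obstacle is the careful bookkeeping: one must match each Cauchy--Schwarz deficit (which introduces $\sqrt{K_j}$ or $K_j$ factors) against the specific renormalization in Assumption \ref{MC}. This is why the coherence bounds appear with \emph{different} normalizing factors for within-block, cross-block diagonal, and cross-block off-diagonal entries --- the factors $1/K_j$ and $1/\sqrt{K_jK_{j'}}$ are tuned to produce a uniform $\mu$-control after Cauchy--Schwarz, and the factor $\lambda_{\min}/\lambda_{\max}$ baked into $\mu$ is what compensates for the $(\lambda_{\max}/\lambda_{\min})^2$ amplification arising when passing from the weighted cone to the unweighted $\|\Delta\|_{2,1}^2$ bound. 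Everything else is routine algebra.
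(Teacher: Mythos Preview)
Your decomposition and the coherence bookkeeping are correct, but the final algebra does not close: the coefficient you obtain is
\[
32\,\mu\, s\,\frac{\lambda_{\max}^2}{\lambda_{\min}^2}
\;=\;32\cdot\frac{\lambda_{\min}\phi}{14\alpha\lambda_{\max}s}\cdot s\cdot\frac{\lambda_{\max}^2}{\lambda_{\min}^2}
\;=\;\frac{16}{7}\,\frac{\lambda_{\max}}{\lambda_{\min}}\,\frac{\phi}{\alpha},
\]
not $\phi/\alpha$. The leftover factor $\lambda_{\max}/\lambda_{\min}\ge 1$ is \emph{not} a numerical constant; it depends on the regularization weights and can be arbitrarily large, so your bound can be vacuous. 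The single power of $\lambda_{\min}/\lambda_{\max}$ baked into $\mu$ cancels one application of the cone condition, but your route forces the cone condition to be \emph{squared}: you need to control $\|\Delta\|_{2,1}^2$, and the $\|\Delta_{J^c}\|_{2,1}^2$ part of it inevitably brings in $(\lambda_{\max}/\lambda_{\min})^2$.

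The paper avoids this by splitting $\Delta=\Delta_J+\Delta_{J^c}$ \emph{before} passing to the perturbation, writing
\[
\Delta^\top\Psi\Delta
\;=\;\Delta_J^\top\Psi\Delta_J+2\,\Delta_{J^c}^\top\Psi\Delta_J+\Delta_{J^c}^\top\Psi\Delta_{J^c}
\;\ge\;\Delta_J^\top\Psi\Delta_J+2\,\Delta_{J^c}^\top\Psi\Delta_J,
\]
and simply \emph{dropping} $\Delta_{J^c}^\top\Psi\Delta_{J^c}\ge 0$ by positive semi-definiteness of $\Psi$. The diagonal block $\Delta_J^\top\Psi\Delta_J$ is then handled using only $\|\Delta_J\|_{2,1}^2\le s\|\Delta_J\|^2$ (no cone condition, hence no $\lambda$-ratio), and the cross block $\Delta_{J^c}^\top\Psi\Delta_J$ uses the cone condition exactly once, producing a single $\lambda_{\max}/\lambda_{\min}$ that cancels against $\mu$. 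This is the missing idea in your argument: you must exploit $\Psi\succeq 0$ to discard the $J^c$--$J^c$ interaction rather than try to bound it through $\|\Delta\|_{2,1}^2$.
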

\begin{proof}
We use here the notations introduced in the proof of Theorem
\ref{est-supnorm}. For any subset $J$ of $\N_M$ such that
$|J|\leq s$ and any $\Delta\in \R^{K}$ we have
\begin{align*}
  \left|\Delta_J\trans(\Psi-\phi I_{K\times K})\Delta_J\right| &\leq
  \sum_{j,j'\in
  J}\sum_{t=1}^{K_j}\sum_{t'=1}^{K_{j'}}
\left|\left(\tilde{\Psi}[j,j']\right)_{t,t'}\right||\tilde{\Delta}_{t}^{j}|
|\tilde{\Delta}_{t'}^{j'}|\\ &= \sum_{j,j'\in
J}\sum_{t=1}^{\min(K_j,K_{j'}}\left|\left(\tilde{\Psi}[j,j']\right)_{t,t}\right|
|\tilde{\Delta}_{t}^{j}||\tilde{\Delta}_{t}^{j'}|\\ &\hspace{3cm}+
\sum_{j,j'\in J}\sum_{t=1}^{K_j}\sum_{t'=1,t'\neq
t}^{K_{j'}}\left|\left(\tilde{\Psi}[j,j']\right)_{t,t'}\right|
|\tilde{\Delta}_{t}^{j}||\tilde{\Delta}_{t'}^{j'}|.
\end{align*}
We now treat separately the first and second terms in the right-hand
side of the above display. For the first term we have, using
consecutively Assumption \ref{MC}, Cauchy-Schwarz and Minkowski's
inequality for the Euclidean norm in $\mathbb R^{K_j}$, that
\begin{eqnarray*}
  \sum_{j,j'\in
J}\sum_{t=1}^{K_j}\left|\left(\tilde{\Psi}[j,j']\right)_{t,t}\right|
|\tilde{\Delta}_{t}^{j}||\tilde{\Delta}_{t}^{j'}|
&\leq& \frac{\lambda_{\min}\phi}{14\alpha
\lambda_{\max}s}\sum_{t=1}^{K_j}\left(\sum_{j\in
J}|\tilde{\Delta}_{t}^j|\right)^2\\
&\leq& \frac{\lambda_{\min}\phi}{14\alpha
\lambda_{\max}s}\|\Delta_{J}\|_{2,1}^2\\
&\leq& \frac{\lambda_{\min}\phi}{14\alpha
\lambda_{\max}}\|\Delta_{J}\|^2.
\end{eqnarray*}
For the second term we get, using Assumption \ref{MC} and
Cauchy-Schwarz's inequality twice, that
\begin{eqnarray*}
\sum_{j,j'\in J}\sum_{t=1}^{K_j}\sum_{t'=1,t'\neq
t}^{K_{j'}}\left|\left(\tilde{\Psi}[j,j']\right)_{t,t'}\right|
|\tilde{\Delta}_{t}^{j}||\tilde{\Delta}_{t'}^{j'}|
&\leq& \frac{\lambda_{\min}\phi}{14\alpha\lambda_{\max}s}\left(
\sum_{j\in J}\frac{1}{\sqrt{K_j}}\sum_{t=1}^{K_j}|\Delta^j_{t}|
\right)^2\\
&\leq& \frac{\lambda_{\min}\phi}{14\alpha
\lambda_{\max}}\|\Delta_{J}\|^2.
\end{eqnarray*}

Combining the two above displays yields
\begin{eqnarray*}\label{stanarg-chapt-4}
\frac{\Delta_{J}\trans\Psi\Delta_{J}}{\|\Delta_{J}\|^{2}} &=& \phi+
\frac{\Delta_J\trans(\Psi-\phi I_{K\times
K})\Delta_J}{\|\Delta_J\|^2}\\
&\geq& \phi\left(1-\frac{2\lambda_{\min}}{14\alpha \lambda_{\max}
}\right).
\end{eqnarray*}

We proceed similarly to treat the quantity $|\Delta_{J^c}\Psi
\Delta_{J}|$. We have, using Assumption \ref{MC}, Cauchy-Schwarz and
Minkowski's inequalities, that
\begin{align*}
  |\Delta_{J^c}\Psi \Delta_{J}| &\leq \sum_{j\in J^c,j'\in
J}\sum_{t=1}^{K_j}\left|\left(\tilde{\Psi}[j,j']\right)_{t,t}\right|
|\tilde{\Delta}_{t}^{j}||\tilde{\Delta}_{t}^{j'}|\\
&\hspace{3cm}+ \sum_{j\in J^c,j'\in
J}\sum_{t=1}^{K_j}\sum_{t'=1,t'\neq
t}^{K_{j'}}\left|\left(\tilde{\Psi}[j,j']\right)_{t,t'}
\right||\tilde{\Delta}_{t}^{j}||\tilde{\Delta}_{t'}^{j'}|\\
&\leq \frac{\lambda_{\min}\phi}{14\alpha
\lambda_{\max}s}\|\Delta_{J^c}\|_{2,1}\|\Delta_J\|_{2,1}\\
&\hspace{3cm}+ \frac{\lambda_{\min}\phi}{14\alpha
\lambda_{\max}s}\left( \sum_{j\in
J} \sum_{t=1}^{K_j}\frac{1}{\sqrt{K_j}}|\Delta^j_{t}|
\right)\left( \sum_{j\in
J^c}\sum_{t=1}^{K_j}\frac{1}{\sqrt{K_j}}
|\Delta^j_{t}| \right)\\
&\leq \frac{2\lambda_{\min}\phi}{14\alpha
\lambda_{\max}s}\|\Delta_J\|_{2,1} \|\Delta_{J^c}\|_{2,1}.
\end{align*}

Next we have, for any vector $\Delta \in \mathbb R^K$ satisfying the
inequality $\sum_{j\in J^{c}}\lambda_j\|\Delta^{j}\|\leq 3\sum_{j\in
J}\lambda_j\|\Delta^{j}\|$, that
\begin{eqnarray*}
\|\Delta_{J^c}\|_{2,1} &=& \sum_{j\in J^c}\|\Delta^j\|\\
&\leq& \sum_{j\in J^c}\frac{\lambda_{j}}{\lambda_{\min}}
\|\Delta^j\|\\
&\leq& \frac{3}{\lambda_{\min}} \sum_{j\in
J}\lambda_j\|\Delta^{j}\|\\
&\leq&\frac{3\lambda_{\max}}{\lambda_{\min}}\|\Delta_{J}\|_{2,1}.
\end{eqnarray*}

Combining these inequalities we find that
\begin{eqnarray*}
\frac{\Delta \trans \Psi \Delta}{\|\Delta_{J}\|^{2}} &\geq&
\frac{\Delta_{J}\trans\Psi\Delta_{J}}{\|\Delta_{J}\|^{2}} +
\frac{2\Delta_{J^{c}}\trans\Psi\Delta_{J}}{\|\Delta_{J}\|^{2}}\\
&\geq& \phi - \frac{2\lambda_{\min}\phi}{14\alpha \lambda_{\max
}} - \frac{12\phi\|\Delta_{J}\|_{2,1}^2}{14\alpha s \|\Delta_{J}\|^2}\\
&\geq& \left(1- \frac{1}{\alpha}\right)\phi.
\end{eqnarray*}
\end{proof}

\begin{lemma}\label{ineq_p}
Let $Ts\ge 8$. If $\omega$ and $\omega'$ are two elements of ${\cal
N}'$ such that $\rho'(\omega,\omega') \geq \frac{Ts}{8}$, then the
cardinality of the set $J(\omega,\omega')= \left\{j\le s:
\sum_{t=1}^{T}I\{\omega_{tj} \neq \omega'_{tj}\}>\frac{T}{16}
\right\}$ is greater than or equal to $\frac{s}{16}$.
\end{lemma}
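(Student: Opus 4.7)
The claim is a simple averaging estimate, so my plan is to set up the bookkeeping and then carry out a one-line pigeonhole. For each $j \in \mathbb{N}_s$, let $n_j = \sum_{t=1}^T I\{\omega_{tj} \neq \omega'_{tj}\}$; because the support of any element of $\mathcal{N}'$ lies in $\{1,\dots,s\}$, the hypothesis rewrites as $\sum_{j=1}^{s} n_j = \rho'(\omega,\omega') \geq Ts/8$. Set $k = |J(\omega,\omega')|$, so that $j \in J(\omega,\omega')$ means $n_j > T/16$, and conversely $j \leq s$, $j \notin J(\omega,\omega')$ forces $n_j \leq T/16$.

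The main (and only) step is then the trivial bound
\begin{equation*}
\frac{Ts}{8} \;\leq\; \sum_{j=1}^{s} n_j \;=\; \sum_{j \in J(\omega,\omega')} n_j + \sum_{\substack{j\leq s \\ j \notin J(\omega,\omega')}} n_j \;\leq\; kT + (s-k)\frac{T}{16} \;=\; \frac{T(15k+s)}{16},
\end{equation*}
from which $2s \leq 15k + s$ and hence $k \geq s/15 \geq s/16$, as required.

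There is no real obstacle here; the only minor care needed is to note that the first $s$ blocks exhaust the support (so the sum over $j\le s$ really equals $\rho'(\omega,\omega')$) and that on $\{j\leq s : j \notin J(\omega,\omega')\}$ we indeed have $n_j \leq T/16$ from the strict inequality defining $J(\omega,\omega')$. The hypothesis $Ts\geq 8$ plays no explicit role in the argument itself; it is inherited from the context (the Varshamov--Gilbert construction of $\mathcal{N}'$) and guarantees that the numerical thresholds $T/16$ and $s/16$ give a non-vacuous conclusion. Once the lemma is established, inequality \eqref{eq:Varsham} follows immediately: since $\|\omega^j - (\omega')^j\|^2 = n_j$ for $\omega,\omega' \in \Omega'$, every $j\in J(\omega,\omega')$ contributes at least $\sqrt{T}/4$ to $\|\omega^j - (\omega')^j\|$, and bounding the $(2,p)$-norm from below by summing only over $J(\omega,\omega')$ gives $\|\omega-\omega'\|_{2,p} \geq |J(\omega,\omega')|^{1/p}\sqrt{T}/4 \geq (s/16)^{1/p}\sqrt{T}/4$, with the convention $x^{1/\infty}=1$ handling $p=\infty$.
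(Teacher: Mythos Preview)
Your proof is correct and follows essentially the same averaging/pigeonhole argument as the paper: the paper does it by contradiction (assuming $|J(\omega,\omega')|<s/16$ and bounding $\rho'(\omega,\omega')$ by $|J^c|\cdot T/16 + |J|\cdot T < Ts/8$), while you carry out the identical estimate directly and in fact obtain the slightly sharper $k\ge s/15$ by using $(s-k)$ rather than $s$ for the size of the complement. Your additional remark deriving \eqref{eq:Varsham} from the lemma is also correct and matches how the lemma is used in the proof of Theorem~\ref{th_minimax_lb}.
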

\begin{proof}
Assume that $|J(\omega,\omega')|< s/16$. Then, denoting by
$J(\omega,\omega')^c$ the complement of $J(\omega,\omega')$, and
using that $|J(\omega,\omega')^c|\le s$, we get
$$
\rho'(\omega,\omega')\le \sum_{j\in
J(\omega,\omega')^c}\sum_{t=1}^{T}I\{\omega_{tj} \neq \omega'_{tj}\}
+ |J(\omega,\omega')|T< Ts/8,
$$
which contradicts the premise of the lemma.
\end{proof}

{\small 

}
\end{document}